\documentclass[letterpaper,journal]{IEEEtran} 
\usepackage{balance}
\usepackage{microtype}
\usepackage{graphicx}
\usepackage{subfigure}
\usepackage{booktabs} 

\usepackage[hidelinks]{hyperref}
\usepackage{threeparttable}
\usepackage{pifont}

\usepackage{multirow}   

\usepackage{algorithm}
\usepackage{algorithmicx}
\usepackage{algpseudocode}

\usepackage{caption}

\usepackage{cite}

\usepackage{subcaption}

\usepackage{amsmath}
\usepackage{amssymb}
\usepackage{mathtools}
\usepackage{amsthm}
\usepackage{enumitem}

\usepackage[capitalize,noabbrev]{cleveref}
\crefname{equation}{Eq.}{Eqs.}
\Crefname{equation}{Eq.}{Eqs.}

\usepackage{xcolor}            
\usepackage{mdframed}

\linespread{0.99}

\crefname{assumption}{Assumption}{Assumptions}

\theoremstyle{plain}
\newtheorem{theorem}{Theorem}

\newtheorem{lemma}{Lemma}

\theoremstyle{definition}
\newtheorem{definition}{Definition}
\newtheorem{assumption}{Assumption}
\theoremstyle{remark}
\newtheorem{remark}{Remark}

\newcommand{\CenteredUppercase}[1]{%
  \begin{center}
    \MakeUppercase{#1}
  \end{center}
}

\makeatletter

\begin{document}

\title{Distributed Online Randomized Gradient-Free Optimization with Compressed Communication}
\author{Longkang~Zhu, Xinli~Shi, \IEEEmembership{Senior~Member, IEEE},~Xiangping~Xu, 
Jinde~Cao, \IEEEmembership{Fellow, IEEE}, and Xiangyong~Chen 
 \thanks{This work was supported by the National Natural Science Foundation of China under Grant Nos. 62473098 and 62203108;  in part by the Australian Research Council under Grant
DE250100961; in part by China Postdoctoral Science Foundation under Grant Nos. 2022M720720 and 2023T160105; and in part by Southeast University
Interdisciplinary Research Program for Young Scholars. (Corresponding author: Xinli Shi)}
 \thanks{L. Zhu is with the School of Cyber Science and Engineering, Southeast University, Nanjing 210096, China (e-mail: 230248643@seu.edu.cn).}
\thanks{X. Shi is with the School of Cyber Science and Engineering, Southeast University, Nanjing, China, 210096, and also with the School of
Engineering, RMIT University, Melbourne, VIC 3001, Australia. (e-mail: xinli\_shi@seu.edu.cn).}
 \thanks{X.~Xu and J. Cao are with the School of Mathematics, Southeast University, Nanjing 210096, China (e-mail: xpxu2021@seu.edu.cn; jdcao@seu.edu.cn).}
 \thanks{X. Chen is with the School of Automation and
Electrical Engineering, Linyi University, Linyi, 276005, China (e-mail: xy8305@163.com).}
 }

\IEEEtitleabstractindextext{
\begin{abstract}
This paper addresses two fundamental challenges in distributed online convex optimization: communication efficiency and optimization under limited feedback. We propose a unified framework named Online Compressed Gradient Tracking (\textbf{OCGT}), which includes two variants: One-point Bandit Feedback (\textbf{OCGT-BF}) and Stochastic Gradient Feedback (\textbf{OCSGT}). The proposed algorithms harness data compression and either gradient-free or stochastic gradient optimization techniques within distributed networks. The proposed framework incorporates a compression scheme with error compensation mechanisms to reduce communication overhead while maintaining convergence guarantees. Unlike traditional approaches that assume perfect communication and full gradient access, \textbf{OCGT} operates effectively under practical constraints by combining gradient-like tracking with one-point or stochastic gradient feedback estimation. We provide a theoretical analysis demonstrating dynamic regret bounds for both variants. Finally, extensive experiments validate that \textbf{OCGT} achieves low dynamic regret while significantly reducing communication requirements.
\end{abstract}

\begin{IEEEkeywords}
Distributed online optimization, gradient tracking, bandit feedback, communication compression, dynamic regret
\end{IEEEkeywords}}

\maketitle
\IEEEdisplaynontitleabstractindextext

\IEEEpeerreviewmaketitle

\section{Introduction}\label{sec:introduction}

In recent years, the rapid growth of interconnected systems and data-driven applications has spurred interest in scalable, real-time optimization methods suitable for decentralized environments.
Distributed online convex optimization (DOCO) is a dynamic framework for addressing optimization challenges over a network of agents where decisions need to be made sequentially over time, often under uncertainty. This field has received significant attention due to its applications in machine learning, target tracking, and resource management (see, e.g., \cite{rabbat2004distributed, 9953189, shahrampour2017distributed}).

Numerous online and offline optimization algorithms have been introduced to handle large-scale problems in distributed settings over the past few years. In \cite{nedic2009distributed},  a distributed subgradient descent method (DGD) is introduced for finite-sum optimization, typically using diminishing stepsizes. Subsequent improvements, such as EXTRA and DGT \cite{shi2015extra,qu2017harnessing}, improve convergence by employing gradient tracking techniques, which use auxiliary variables to estimate the global gradient. The works \cite{yan2012distributed} explore distributed convex optimization in online environments, providing algorithms demonstrating $\mathcal{O}(\sqrt{T})$ static regret for convex loss functions. Furthermore, \cite{shahrampour2017distributed} introduces a distributed online mirror descent algorithm and defined dynamic regret as a performance metric. Distributed online optimization algorithms based on gradient tracking are studied in \cite{zhang2019distributed, carnevale2022gtadam}.

In distributed optimization, communication efficiency is crucial as data scale and complexity continue to grow. Transmitting high-dimensional vectors among agents can consume substantial bandwidth and energy, particularly for resource-constrained devices or networks. Driven by applications in sensor networks and large-scale machine learning, extensive research focuses on developing distributed online and offline optimization algorithms that operate under communication constraints \cite{10366852, cao2021decentralized, yuan2022distributed}.

In offline settings, various strategies aim to reduce communication overhead through compression. For example, gradient sparsification techniques \cite{stich2018sparsified} enable nodes to transmit only a subset of gradient components (e.g., those with the largest magnitudes) while locally storing the rest. Error feedback methods \cite{seide20141, richtarik2021ef21} effectively address convergence issues caused by biased compression operators. Research also explores adaptive compression, which dynamically adjusts compression levels to improve convergence \cite{zhang2023quantized, 10423571}. These efforts primarily target offline optimization. Meanwhile, compressed communication in DOCO is gaining interest. For instance, \cite{cao2023decentralized} introduces a decentralized method with compressed communication that achieves optimal static regret bounds for convex objectives, while \cite{yuan2022distributed} examines the effects of random quantization in bandit settings. Similarly, \cite{11165425} investigates quantized communication combined with one-point estimators for distributed online optimization in a zero-sum game between two networks.  Additionally, \cite{tu2022distributed} provides theoretical guarantees for compressed communication under both full-information and one-point feedback for strongly convex functions. Building on this, \cite{zhang2023quantized} proposes a quantized distributed online Frank-Wolfe algorithm, and \cite{liu2025decentralized} employs a time-varying scaling compression method to mitigate communication bottlenecks.

Another significant challenge in distributed optimization is the limited accessibility of gradient information. In many practical scenarios, such as multi-agent robotics \cite{jiang2023black} or legacy black-box systems \cite{nesterov2017random}, obtaining full gradient information can be infeasible or prohibitively expensive. Consequently, agents often only have access to function value evaluations (zeroth-order information). This has led to the development of gradient-free algorithms, enabling each agent to rely solely on function values obtained at queried points. Recently, several works have explored distributed online bandit optimization and distributed offline gradient-free optimization \cite{yi2020distributed, yuan2020distributed, wan2019Subgradient}. Building on the foundational literature for centralized gradient-free optimization \cite{nesterov2017random}, the studies in \cite{yuan2015randomized} and \cite{pang2019randomized} introduce distributed gradient-free algorithms based on two-point feedback.

\begin{table*}[ht]
\centering
\caption{Comparison with existing decentralized online optimization methods. The notation $\mathcal{O}(\cdot)$ represents the asymptotic upper bound. $T$ is the total number of iterations. $D_1, D_2, D_3, D_4$ are constants depending on initial states, graph topology, and loss function properties. $\mathrm{U}_0$ represents the initial error state vector (defined before Theorem \ref{Th1}). }
\label{tab:comparison}
\begin{threeparttable}
\resizebox{\textwidth}{!}{
\begin{tabular}{lcccc}
\toprule
\textbf{Reference} & \textbf{Compression?} & \textbf{Gradient type} & \textbf{Regret Type} & \textbf{Regret Bound} \\
\midrule
\cite{cao2023decentralized} & \ding{51} & Local full gradient & Static  & $\mathcal{O}\bigl(\sqrt{T}\bigr)$ \\
\cite{liu2025decentralized} & \ding{51} & Local full gradient & Static  & $\mathcal{O}(\log (T))$ \\
\cite{yuan2022distributed} & \ding{51} & One-point bandit & Static  & $\mathcal{O}\bigl(T^{3/4}\bigr)$ \\

\cite{tu2022distributed} & \ding{51} & One-point bandit & Static  & $\mathcal{O}\bigl(T^{2/3}\ln^{1/3}(T)\bigr)$ \\

\cite{wang2022pushsum}& \ding{55}& One-point bandit & Static&$\mathcal{O}(T^{2 / 3} \ln ^{2 / 3}(T))$ \\
\cite{wang2024distributed}& \ding{55}& One-point bandit & Dynamic& $\mathcal{O}\left(T^{(3 / 4+\rho / 4+\kappa)}\right)\textsuperscript{a}$\\
\cite{zhang2023quantized} & \ding{51} & Local full gradient & Dynamic  & $\mathcal{O}\Bigl(D_{1}+D_{2}\,T+D_{3}\sum_{t=1}^{T}\epsilon_{d, k_t}+\mathcal{H}_{T}+\mathcal{P}_{T}\Bigr)$\textsuperscript{b} \\
\cite{hou2024dynamic} & \ding{55} & Local full gradient & Dynamic  & $\mathcal{O}\bigl(\sqrt{T}\,\bigl(\mathcal{C}_{T}+1\bigr)\bigr)$\textsuperscript{c} \\
\cite{zhang2019distributed} & \ding{55} & Local full gradient & Dynamic  & $\mathcal{O}\bigl(D_4+\mathcal{P}_{T}+\mathcal{V}_{T}\bigr)$\textsuperscript{d} \\

\addlinespace[0.3em]
\midrule
\addlinespace[0.5em]
\multirow{2}{*}{\textbf{Our Work}}
& \ding{51}
& One-point bandit
& Dynamic (Theorem \ref{Th1})
& $\mathcal{O}( \|\mathrm{U}_0\| +  \sum_{t=0}^{T-1} \mathbf{p}_t^2 +  \sum_{t=0}^{T-1}\mathbf{v}_t^2  +\sum_{t=0}^{T-1} v_t^2  +  \sum_{t=0}^{T-1} \bar{\sigma}_t^2 )\textsuperscript{e}$  \\
& \ding{51}& Stochastic gradient & Dynamic (Theorem \ref{Th3}) 
 & $\mathcal{O}( \|\mathrm{U}_0\| +  \sum_{t=0}^{T-1} \mathbf{p}_t^2 +  \sum_{t=0}^{T-1}\mathbf{v}_t^2 +  \sum_{t=0}^{T-1} \hat{\sigma}_t^2 )$ \\ 
\bottomrule
\end{tabular}
}
\begin{tablenotes}
\footnotesize
\item[a] The tunable parameters $\rho$ and $\kappa$ satisfy $\rho, \kappa \in(0,1)$;
\item[b] Cumulative function variation: $\mathcal{H}_t := \sum _{t=1}^{T-1}\max _{i \in \mathcal{V}} \max _{\boldsymbol{x} \in \mathbb{R}^n}\left|f_{i, t+1}(x)-f_{i, t}(x)\right|$;

Cumulative gradient variation : $\mathcal{P}_t := \sum _{t=1}^{T-1}\sup _{i\in \mathcal{V}} \sup _{x \in \mathbb{R}^n}\left\|\nabla f_{i,t+1}(x)-\nabla f_{i,t}(x)\right\|_\infty$;

$\epsilon_{d, k_t}$ denotes a quantization resolution that is dependent on the quantization levels $k_t$ and the dimension $d$;

\item[c] The path length of an arbitrary sequence:
$
\mathcal{C}_T=\sum_{t=1}^T\left\|u_{t+1}-u_t\right\|
$;
\item[d] Cumulative drift of optimal points: $\mathcal{V}_t :=\sum_{t=0}^{T-1}\left\|x_{t+1}^*-x_t^*\right\|$; 
\item[e] $\mathbf{p}_t$ and $\mathbf{v}_t$ quantify gradient variation and optimal point drift (defined after Definition \ref{def:dynamic_regret}); $v_t$ is the smoothing parameter for bandit feedback;

$\bar{\sigma}_t^2$ measures gradient heterogeneity at the optimum (Assumption \ref{assump:heterogeneity});
\end{tablenotes}
\end{threeparttable}
\end{table*}

To evaluate performance metrics, online convex optimization commonly employs static regret or dynamic regret, with most of the aforementioned works using static regret. Recent works focus on dynamic regret bounds under various conditions. For example, \cite{hou2024dynamic} investigates distributed online composite optimization and establishes dynamic regret bounds, while \cite{cao2024distributed} proposes an adaptive quantization method achieving $\mathcal{O}(\sqrt{T})$ dynamic regret for both full-information and one-point feedback scenarios in a centralized setting. In \cite{zhang2019distributed}, the authors first investigate gradient tracking for online optimization, establishing a dynamic regret bound of $\mathcal{O}\bigl(D_4 + \mathcal{P}_{T} + \mathcal{V}_{T}\bigr)$. Then, \cite{alghunaim2024enhanced} and \cite{carnevale2022gtadam} refine these approaches by improving both algorithmic designs and regret analysis strategies.

In most existing DOCO works \cite{yuan2018adaptive,lu2021privacy,10551450}, it is commonly assumed that agents communicate exact information and have access to full local gradients. While recent work like \cite{10294106} relaxes these assumptions via compressed bandit algorithms for unbalanced digraphs, it focuses on static regret bounds for convex objectives. However, static regret relies on a fixed benchmark and is insufficient for characterizing algorithmic performance in dynamic environments where the optimal solution varies over time. Consequently, there remains a lack of research that investigates dynamic regret bounds under the simultaneous constraints of quantized communication and limited bandit feedback.

In this paper, we tackle two fundamental challenges in DOCO: communication efficiency and optimization under limited feedback. Our goal is to develop novel algorithms that remain effective despite communication bottlenecks and incomplete feedback. The main innovations of our work can be summarized as follows:
\begin{itemize}
\item We propose a unified online compressed gradient tracking (\textbf{OCGT}) framework to simultaneously reduce communication overhead and handle limited gradient information in large-scale distributed settings. Specifically, the proposed framework employs a tailored compression scheme and a robust error compensation mechanism, where compression reduces communication cost, while error compensation mitigates error accumulation from repeated compression. Our compression approach builds upon techniques from \cite{liao2022compressed, zhao2024faster}. Unlike methods requiring full gradients, \textbf{OCGT} integrates gradient tracking with either one-point bandit feedback (\textbf{OCGT-BF}) or stochastic gradient feedback (\textbf{OCSGT}), effectively addressing the intertwined challenges of constrained communication and incomplete feedback, ensuring robust convergence despite practical limitations. The proposed methods are compared with related works in \cref{tab:comparison}.

\item A comprehensive dynamic regret analysis is provided, demonstrating the performance of the proposed algorithms to adapt to time-varying objectives under both bandit and stochastic gradient scenarios. Specifically, we show that the proposed unified framework can effectively handle evolving objectives under limited gradient information in an online optimization framework. Our theoretical results rigorously bound the algorithms’ performance in tracking the sequence of time-varying optima. 

\item The effectiveness of \textbf{OCGT} is validated through extensive experiments on real-world data. In particular, simulations on logistic regression tasks using real datasets demonstrate the robustness and adaptability of the proposed algorithms, highlighting their applicability to practical machine learning tasks.
\end{itemize}

The remainder of this work is organized as follows. Section~\ref{PF} introduces the problem formulation, key assumptions, and relevant background on randomized gradient-free optimization and communication compression. The proposed distributed \textbf{OCGT} framework, along with the corresponding dynamic regret analysis, is presented in Section~\ref{ocgt}. A numerical experiment is provided in Section~\ref{experiments}. Finally, conclusions are summarized in Section~\ref{conclusions}.

We clarify the notation used in the subsequent analysis. Let $X_t = \left[x_1, \dots, x_n\right]^\top \in \mathbb{R}^{n \times m}$ and $\nabla F(X_t) = \left[\nabla f_1(x_1), \dots, \nabla f_n(x_n)\right]^\top \in \mathbb{R}^{n \times m}$ denote the collections of local decision variables and their corresponding gradients, respectively. The notation $\|\cdot\|$ represents the Euclidean norm for vectors, the Frobenius norm for general matrices, and the spectral norm when applied to square matrices. The symbol $\preceq$ indicates an element-wise inequality between vectors or matrices. The inner product between vectors is denoted by $\langle \cdot, \cdot \rangle$, and $\mathbf{1}_n$ denotes the $n$-dimensional column vector with all entries equal to one.

\section{Preliminaries}
\label{PF}

This section presents the problem formulation for distributed online optimization over networks of interconnected agents. We give key assumptions about loss functions and define dynamic regret as a performance metric. The proposed algorithm introduces compression methods and explores random gradient-free optimization techniques, specifically the one-point bandit method for estimating gradients in complex computational settings.

\subsection{Problem Formulation}
Consider a distributed online optimization problem over a network of $n$ agents: \begin{align}
    &\min_{x \in \mathbb{R}^m} f_t(x) \triangleq \frac{1}{n} \sum_{i=1}^n f_{i,t}(x), \quad t \geq 0, \label{P}
\end{align}
where $x \in \mathbb{R}^m$ denotes the  decision variable, and $f_{i,t}: \mathbb{R}^m \rightarrow \mathbb{R}$ is the local convex loss function of agent $i$ at iteration $t$. Each agent only has access to the evaluation of its local loss function per round and can only communicate compressed information with its neighbors under a resource-limited network.

In this work, all agents are interconnected via an undirected graph $\mathcal{G}=(\mathcal{V}, \mathcal{E})$,  where $\mathcal{V}=\{1,2, \ldots, n\}$ denotes the collection of nodes (agents), and $\mathcal{E} \subseteq \mathcal{V} \times \mathcal{V}$ is the corresponding edge collection.
For an arbitrary node $i \in \mathcal{V}$, its neighborhood set  is characterized as  $\mathcal{N}_i = \{j : (i,j) \in \mathcal{E}\} \cup \{i\}$, which signifies that node $i$ exclusively shares its local state with nodes $j \in \mathcal{N}_i$.
An undirected graph is connected if a path exists between any two distinct nodes. Besides, information exchange among nodes is facilitated through a mixing matrix $\mathbf{W} = [w_{ij}]\in[0,1]^{n\times n}$. Specifically,  a weight $w_{ij}>0$ is assigned to each edge $(i,j) \in \mathcal{E}$, indicating a direct interconnection, while $w_{ij}=0$ for node pairs $(i,j) \notin \mathcal{E}$, thereby denoting the absence of a direct link.
Moreover, the matrix $\mathbf{W}$ is subject to a specific assumption.
\begin{assumption}
\label{graph}
The graph $\mathcal{G}=(\mathcal{V}, \mathcal{E})$ is connected. Its mixing matrix $\mathbf{W}$ is symmetric and double stochastic, i.e., $\mathbf{W}^{\top}=\mathbf{W}$, $\mathbf{W} \mathbf{1}_n=\mathbf{1}_n$, and $\mathbf{1}_n^{\top} \mathbf{W}=\mathbf{1}_n^{\top}$.
\end{assumption}

We make the following standard assumptions on the loss functions. These assumptions are commonly adopted in the literature on decentralized online/offline optimization\cite{cao2023decentralized,tu2022distributed,liu2025decentralized} and federated learning\cite{xin2020variance} .

\begin{assumption}
For any $t \ge 0$, the global loss function $f_t$ is $\mu$-strongly convex and the local loss functions 
$f_{i, t}$ are $L_i$-smooth, i.e., for any $u,v \in\mathbb{R}^m$, 
\begin{align}
&\left\langle\nabla f_t(u)-\nabla f_t\left(v\right), u-v\right\rangle \geq \mu\left\|u-v\right\|^2,\\
&\left\|\nabla f_{i, t}(u)-\nabla f_{i, t}(v)\right\| \leq L_i \|u-v \|.
\end{align}
\label{smooth}
\end{assumption}
Under \cref{smooth}, the global loss function $f_t = \frac{1}{n}\sum_{i=1}^n f_{i,t}$ is $L$-smooth, where $L=\max_{i \in \mathcal{V}} \{L_i\}$. Let $\bar{g}_t := \frac{1}{n}\sum_{i=1}^n \nabla f_{i,t}(x_{i,t})$ be the average of local gradients at local points $X_t$. The deviation between this average local gradient and the global gradient at the average point $\bar{x}_t$ can be bounded: $\mathbb{E}[\|\bar{g}_t - \nabla f_t(\bar{x}_t)\|] \le \frac{L}{\sqrt{n}}\|X_t-\mathbf{1}\bar{x}_t^\top\|$. The $\mu$-strong convexity ensures that the global problem $\min_x f_t(x)$ has a unique minimizer, denoted by $x_t^\star \in \mathbb{R}^{m}$, for each $t$.

\begin{assumption}[Bounded Gradient Heterogeneity at Optimum]
There exists a sequence of non-negative constants $\{\sigma_{i,t}^2\}_{t \ge 0}$ such that for any agent $i \in \mathcal{V}$ and time steps $t \ge 0$,
$$
\|\nabla f_{i,t}(x_t^\star)\|^2 \le \sigma_{i,t}^2.
$$
The average heterogeneity at time $t$ is defined as $\bar{\sigma}_t^2:=\frac{1}{n}\sum_{i=1}^n \sigma_{i,t}^2$.
\label{assump:heterogeneity}
\end{assumption}

Building upon Assumption \ref{assump:heterogeneity}, which generalizes the concept of bounded gradient heterogeneity from \cite{koloskova2020unified} to accommodate time-varying local objective functions, this work establishes a foundational framework for analyzing distributed optimization in dynamic environments.  This assumption characterizes the dissimilarity among local objective functions at the global optimum by bounding the gradient norm of each agent’s objective function $f_{i,t}$ at the optimal point $x_t^{\star}$. The heterogeneity is captured through individual constants $\sigma_{i,t}^2$ and summarized by the aggregated measure $\bar{\sigma}_t^2$. Notably, $\bar{\sigma}_t^2 = 0$ when all local functions are identical at time $t$, signifying complete consensus among agents. Although similar bounded diversity assumptions have been employed in distributed optimization studies (e.g., \cite{lian2017can, tang2018d}), the formulation adopted here is distinct in that it constrains only the gradient behavior at the optimal point $x_t^{\star}$, rather than across the entire domain. This results in a less restrictive yet analytically meaningful foundation for addressing time-varying distributed optimization problems.

For distributed online optimization, dynamic regret is used to evaluate the performance of online algorithms, introduced as follows.

\begin{definition}[Dynamic Regret] \label{def:dynamic_regret}
Given the sequence of local decisions $\{x_{i, t}\}$ generated by an online distributed algorithm, the dynamic regret over $T$ time steps is defined as
\[
R_T \triangleq \sum_{t=1}^T \mathbb{E}[f_t\left(\bar{x}_t\right)] - \sum_{t=1}^T f_t\left(x^{\star}_t\right),
\]
where $\bar{x}_t \triangleq \frac{1}{n} \sum_{i=1}^n x_{i,t}$ denotes the average decision across all agents at time $t$, and $\{x_t^\star\}_{t \ge 1}$ represents the sequence of minimizers of the time-varying global objective functions $f_t(x)$. The expectation $\mathbb{E}[\cdot]$ is taken over the algorithm's internal randomness, such as that introduced by bandit feedback or stochastic gradient estimators.
\end{definition}


The static regret measures cumulative the loss compared to the best fixed decision retrospectively, i.e., $\sum_{t=1}^T f_t(x_t) - \min_{x \in \mathcal{X}} \sum_{t=1}^T f_t(x)$,  a metric that makes sense in stationary or slowly changing environments.   However, there are many real-world applications, such as distributed sensing, resource allocation for communication networks, and online advertising, where the data distributions and objectives are inherently non-stationary, which results in optimal solution $x_t^\star$ varying significantly over time. In these dynamic environments, an algorithm that converges to a fixed point may have low static regret but still be incapable of effectively adapting to the environment. The dynamic regret serves as a more robust and practical performance measure that benchmarks against the sequence of optimal solutions $\{x_t^\star\}_{t=1}^T$.

The dynamic regret of the proposed algorithm is characterized by two key regularity measures: $\mathbf{p}_t$, which quantifies the maximum gradient variation across functions and points, and $\mathbf{v}_t$, which captures the distance between consecutive optimal points in the time-varying optimization scenario, defined as follows
\begin{align}
& \mathbf{p}_t \triangleq \sup _{i\in \mathcal{V}} \sup _{x \in \mathbb{R}^m}\left\|\nabla f_{i,t+1}(x)-\nabla f_{i,t}(x)\right\|, \\
& \mathbf{v}_t\triangleq\left\|x^{\star}_{t+1}-x^{\star}_t\right\| .
\end{align}

\subsection{Compression Methods}

We consider compression operators $\mathcal{C}:\mathbb{R}^m\rightarrow \mathbb{R}^m$ that map a vector to a potentially lower-information representation suitable for communication. These operators can be unbiased (e.g., certain sparsifiers) or biased (e.g., quantization, Top-k) \cite{koloskova2019decentralized,xin2020variance}. We assume the following property regarding the compression error.

\begin{assumption}\label{compress}
There exists a constant $\omega \in[0,1]$ such that for any vector $x \in \mathbb{R}^m$, the compression operator $\mathcal{C}$ satisfies
$$
\mathbb{E}_{\mathcal{C}}\left[\|\mathcal{C}(x)-x\|^2\right] \leq \omega \|x\|^2.
$$
The expectation $\mathbb{E}_{\mathcal{C}}[\cdot]$ is taken with respect to the internal randomness of the operator $\mathcal{C}$ (if any). The value $\omega=0$ corresponds to no compression, while $\omega=1$ represents the loosest bound.
\end{assumption}

Examples of operators satisfying \cref{compress} include:

\textbf{Stochastic quantizer.} For a given vector $\boldsymbol{x} \in \mathbb{R}^m$ and a positive integer $s$ (number of quantization levels per sign), a common stochastic quantizer $\mathcal{C}_Q$ operates entry-wisely. For each entry $x_i$, it computes $l_i = \lfloor s |x_i| / \|\boldsymbol{x}\| \rfloor$. Then,
\begin{align}
[\mathcal{C}_Q(\boldsymbol{x})]_i = 
\begin{cases}
\displaystyle \frac{\|\boldsymbol{x}\|}{s} \operatorname{sgn}(x_i) \cdot l_i & \text{w.p. } 1 - \delta_i, \\
\displaystyle \frac{\|\boldsymbol{x}\|}{s} \operatorname{sgn}(x_i) \cdot (l_i + 1) & \text{w.p. } \delta_i,
\end{cases}
\end{align}
where $\delta_i = \frac{s |x_i|}{\|\boldsymbol{x}\|} - l_i$. This quantizer is unbiased, i.e., $\mathbb{E}[\mathcal{C}_Q(\boldsymbol{x})] = \boldsymbol{x}$, and satisfies \cref{compress} with $\omega = \min\{m/s^2, \sqrt{m}/s\}$.

\textbf{Top-$k$ sparsification.} The Top-$k$ operator, $\mathcal{C}_{\text{top}}(\mathbf{x})$, retains the $k$ elements of $\mathbf{x}$ with the largest absolute values and sets the rest to zero. This can be expressed as $\mathcal{C}_{\text{top}}(\mathbf{x}) = \mathbf{x} \odot \mathbf{p}$, where $\mathbf{p}$ is a binary mask with $k$ ones corresponding to the largest magnitude entries. Top-$k$ is a biased operator (unless $\mathbf{x}$ is already $k$-sparse) and satisfies \cref{compress} with $\omega = 1 - k/m$.

\subsection{Random Gradient-Free Optimization} \label{RGF}

When gradients $\nabla f_{i,t}(x)$ are unavailable, we resort to gradient-free methods that estimate gradients using only function value evaluations. The one-point bandit method employs Gaussian smoothing and randomized finite differences. The gradient of the smoothed function $f_{i,t}^v(x) = \mathbb{E}_{u \sim \mathcal{N}(0, I_m)}[f_{i,t}(x + v u)]$ is estimated using a single function evaluation perturbed along a random direction. Specifically, the gradient estimator for $f_{i,t}$ at point $x$ is
\begin{align}
   G_{i, t}(x,u_{i,t}) =\frac{f_{i, t}\left(x+v_t u_{i, t}\right)-f_{i, t}\left(x\right)}{v_t} u_{i, t}, \label{bandit feed}
\end{align}
where $u_{i,t}$ is drawn from a standard multivariate Gaussian distribution $\mathcal{N}(0, I_m)$, and $v_t > 0$ is a smoothing parameter. The expectation of this estimator is the gradient of the smoothed function: $\mathbb{E}_{u_{i,t}}[G_{i, t}(x,u_{i,t})] = \nabla f_{i,t}^{v_t}(x)$. The following Lemma bounds the bias and variance of this estimator with respect to the true gradient $\nabla f_{i,t}(x)$.

\begin{lemma}\cite[Lemma 3, Theorem 4]{nesterov2017random}\label{bandit}
 If $f_{i, t}(x)$ is $L_i$-smooth, then for any $x \in \mathbb{R}^m$, smoothing parameter $v_t > 0$, and random direction $u_{i,t} \sim \mathcal{N}(0, I_m)$, the one-point gradient estimator $G_{i, t}(x, u_{i,t})$ defined in \cref{bandit feed} satisfies
\begin{align}
    &\mathbb{E}_{u_{i,t}}\left[\left\|G_{i, t}(x, u_{i,t})\right\|^2\right]\leq 2(m+4)\left\|\nabla f_{i, t}(x)\right\|^2  \nonumber\\ 
&\qquad\qquad\qquad\qquad\quad\quad\quad+\frac{v_t^2}{2} L_i^2(m+6)^3,\label{bandit_2}\\
      &\left\|\mathbb{E}_{u_{i,t}} \left[G_{i, t}(x, u_{i,t})\right]-\nabla f_{i, t}(x)\right\| \leq \frac{v_t}{2} L_i (m+3)^{\frac{3}{2}}.  \label{bandit_1}
\end{align}

\end{lemma}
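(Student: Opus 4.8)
The plan is to prove the two inequalities separately, in each case reducing everything to moments of the Gaussian direction $u \sim \mathcal{N}(0,I_m)$. Throughout I abbreviate $g := \nabla f_{i,t}(x)$ and write $M_p := \mathbb{E}_u[\|u\|^p]$. The whole argument rests on three elementary Gaussian facts: the Stein-type identity $\mathbb{E}_u[uu^\top]=I_m$ (hence $g=\mathbb{E}_u[\langle g,u\rangle u]$), the mixed moment $\mathbb{E}_u[\langle g,u\rangle^2\|u\|^2]=(m+2)\|g\|^2$, and the power-moment bound $M_p\le(m+p)^{p/2}$ for $p\ge 2$ (in particular $M_3\le(m+3)^{3/2}$ and $M_6\le(m+6)^3$).

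For the bias bound \eqref{bandit_1}, I would start from the representation $\mathbb{E}_u[G_{i,t}(x,u)] = \nabla f_{i,t}^{v_t}(x) = \mathbb{E}_u\big[\tfrac{1}{v_t}(f_{i,t}(x+v_tu)-f_{i,t}(x))\,u\big]$ already recorded before the statement, and use $g=\mathbb{E}_u[\langle g,u\rangle u]$ to express the difference as a single expectation, $\mathbb{E}_u[G_{i,t}(x,u)]-g = \mathbb{E}_u\big[\tfrac{1}{v_t}\big(f_{i,t}(x+v_tu)-f_{i,t}(x)-v_t\langle g,u\rangle\big)\,u\big]$. Passing the norm inside by Jensen and invoking the $L_i$-smoothness from Assumption \ref{smooth} in the quadratic-remainder form $|f_{i,t}(x+v_tu)-f_{i,t}(x)-v_t\langle g,u\rangle|\le \tfrac{L_i}{2}v_t^2\|u\|^2$ reduces the right-hand side to $\tfrac{L_i v_t}{2}\,\mathbb{E}_u[\|u\|^3]=\tfrac{L_i v_t}{2}M_3$, and $M_3\le(m+3)^{3/2}$ closes it.

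For the second-moment bound \eqref{bandit_2}, I would split the finite difference as $f_{i,t}(x+v_tu)-f_{i,t}(x)=\big(f_{i,t}(x+v_tu)-f_{i,t}(x)-v_t\langle g,u\rangle\big)+v_t\langle g,u\rangle$ and apply $(a+b)^2\le 2a^2+2b^2$. Dividing by $v_t^2$ and multiplying by $\|u\|^2$ gives, pointwise in $u$, $\|G_{i,t}(x,u)\|^2\le \tfrac{L_i^2 v_t^2}{2}\|u\|^6 + 2\langle g,u\rangle^2\|u\|^2$, where the first term again uses the smoothness remainder. Taking expectations and inserting $M_6\le(m+6)^3$ together with $\mathbb{E}_u[\langle g,u\rangle^2\|u\|^2]=(m+2)\|g\|^2\le(m+4)\|g\|^2$ yields exactly $2(m+4)\|\nabla f_{i,t}(x)\|^2+\tfrac{v_t^2}{2}L_i^2(m+6)^3$.

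The routine parts are the two smoothness-remainder estimates and the $(a+b)^2$ split; the real work, and the step I expect to be the main obstacle, is establishing the Gaussian moment estimates with the stated constants. The mixed moment follows from $\mathbb{E}_u[\|u\|^2 uu^\top]=(m+2)I_m$, which one obtains by rotating coordinates so that $g$ aligns with $e_1$ and computing $\mathbb{E}[u_1^4]=3$ and $\mathbb{E}[u_1^2u_j^2]=1$ for $j\neq 1$; the power-moment bound $M_p\le(m+p)^{p/2}$ is the genuinely technical ingredient (Lemma~3 of the cited reference), provable via $\chi$-distribution moments or an induction on $p$. Since these are precisely the facts supplied by \cite{nesterov2017random}, the remainder is bookkeeping, with the only mild slack being the harmless loosening of $(m+2)$ to $(m+4)$ used to match the clean constant in \eqref{bandit_2}.
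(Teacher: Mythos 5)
Your proof is correct, and there is nothing in the paper to diverge from: the paper supplies no proof of this lemma, importing it verbatim by citation from \cite[Lemma 3, Theorem 4]{nesterov2017random}, and your derivation faithfully reconstructs the cited source's argument --- the same second-order remainder bound $|f_{i,t}(x+v_t u)-f_{i,t}(x)-v_t\langle \nabla f_{i,t}(x),u\rangle|\le \tfrac{L_i}{2}v_t^2\|u\|^2$, the same $(a+b)^2\le 2a^2+2b^2$ split, and the same Gaussian power-moment estimate $\mathbb{E}[\|u\|^p]\le(m+p)^{p/2}$. Your only deviation is computing the mixed moment exactly as $\mathbb{E}_u[\langle g,u\rangle^2\|u\|^2]=(m+2)\|g\|^2$ and then loosening to $(m+4)\|g\|^2$ (where the reference quotes the $(m+4)$ bound directly), which is harmless and reproduces the stated constants exactly.
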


\section{Online Compressed Gradient Tracking}\label{ocgt}

In this section, we propose an algorithm framework suitable for communication-constrained settings,  supporting zero-order/stochastic gradient and maintaining effective consensus among agents. We also present a dynamic regret analysis that establishes performance guarantees under standard assumptions.

\subsection{Algorithm Description}
\begin{algorithm}[t]
\caption{Compressed Communication Subroutine}
\label{alg:c-comm}
\begin{algorithmic}[1]
\State \textbf{Initialization:}  Local estimate $\theta_i \in \mathbb{R}^m$; Historical estimates $\hat{h}_i, \tilde{h}_i \in \mathbb{R}^m$; Mixing parameter $\alpha \in (0,1)$;
\State Compress  and communicate : $q_i = \mathcal{C}(\theta_i - \hat{h}_i)$

\State Update consensus estimate: $\tilde{\theta}_i = \tilde{h}_i + \sum_{j \in \mathcal{N}_i} w_{ij}q_j$

\State Update local estimate: $\hat{\theta}_i = \hat{h}_i + q_i$

\State Update historical estimates:
\begin{align}
	\tilde{h}_i^+ = \alpha \tilde{\theta}_i + (1-\alpha)\tilde{h}_i \nonumber\\
\hat{h}_i^+ = \alpha \hat{\theta}_i + (1-\alpha)\hat{h}_i \nonumber
\end{align}
\State \textbf{Return:} $\hat{\theta}_i, \tilde{\theta}_i, \hat{h}_i^+, \tilde{h}_i^+$
\end{algorithmic}
\end{algorithm}

\begin{algorithm}[ht]
\caption{\textbf{Unified OCGT Framework: \\OCGT-BF/OCSGT (Option I/II)}}
\label{alg:unified-ocgt}
\begin{algorithmic}[1]
\State \textbf{Initialization:}  Smoothing parameter $v_t > 0$; stepsize $\eta \in (0,1)$; mixing parameters $\alpha_x, \alpha_y \in (0,1)$; initial point $x_{i,0} \in \mathbb{R}^m$, $\hat{h}_{i,0}^x, \hat{h}_{i,0}^y \in \mathbb{R}^m$, $\tilde{h}_{i,0}^x = \sum_{j \in \mathcal{N}_i} w_{ij}\hat{h}_{j,0}^x$, 
and $\tilde{h}_{i,0}^y = \sum_{j \in \mathcal{N}_i} w_{ij}\hat{h}_{j,0}^y$ for each $i \in \mathcal{V}$.

\For{each agent $i \in \mathcal{V}$ in parallel}
    \For{$t=0,1,\ldots,T-1$}
        \State \textbf{(1) Compression and communication step:}
        \begin{align}
            &\left(\hat{x}_{i,t}, \tilde{x}_{i,t}, \hat{h}_{i,t+1}^x, \tilde{h}_{i,t+1}^x\right)\nonumber \\ &\quad\quad\quad\quad= \text{Algorithm \ref{alg:c-comm} } \left(x_{i,t}, \hat{h}_{i,t}^x, \tilde{h}_{i,t}^x, \alpha_x \right)\nonumber\\
            &\left(\hat{y}_{i,t}, \tilde{y}_{i,t}, \hat{h}_{i,t+1}^y, \tilde{h}_{i,t+1}^y\right) \nonumber \\
            &\quad\quad\quad\quad= \text{Algorithm \ref{alg:c-comm} }\left(y_{i,t}, \hat{h}_{i,t}^y, \tilde{h}_{i,t}^y, \alpha_y\right) \nonumber
        \end{align}
        \vspace{-1em}
        \State \textbf{(2) Update decision variable:}
        \[
        x_{i,t+1} 
        \;=\;
        x_{i,t} \;-\; \eta\,\Bigl(\,\hat{x}_{i,t} \;-\; \tilde{x}_{i,t} \;+\; y_{i,t}\Bigr).
        \]

        \State \textbf{(3) Gradient computation:}
        \begin{mdframed}[backgroundcolor=blue!15]
        \noindent
        \textbf{Option I (OCGT-BF: Bandit Feedback)}
        \begin{itemize}
            \item Generate a random direction: 
            \[
            u_{i,t+1} \sim \mathcal{N}(0, I_d).
            \]
            \item Compute gradient approximation by (\ref{bandit feed}) (one-point bandit):
            \[
            G_{i,t+1} \;=\; G_{i,t+1}\bigl(x_{i,t+1},\,u_{i,t+1}\bigr).
            \]
        \end{itemize}
        \end{mdframed}
        \begin{mdframed}[backgroundcolor=green!15]
        \noindent
        \textbf{Option II (OCSGT: Stochastic Gradient)}
        \begin{itemize}
            \item Compute stochastic gradient:
            \[
            G_{i,t+1} 
            \;=\; \nabla f_{i,t+1}\!\Bigl(x_{i,t+1},\, \xi_{i,t+1}\Bigr).
            \]
        \end{itemize}
        \end{mdframed}

        \State \textbf{(4) Update gradient tracking variable:}
        \[
        y_{i,t+1} 
        \;=\; 
        y_{i,t} \;-\; \eta\,\bigl(\hat{y}_{i,t} - \tilde{y}_{i,t}\bigr)
        \;+\; 
        \bigl(G_{i,t+1} - G_{i,t}\bigr).
        \]
    \EndFor
\EndFor

\State \textbf{Return:} The iterates $\{x_{i,t}\}_{t=0}^T$ for each agent $i\in \mathcal{V}$.

\end{algorithmic}
\end{algorithm}

This subsection proposes a unified  Online Compressed Gradient Tracking (\textbf{OCGT}) framework for distributed online optimization in multi-agent systems.

The proposed framework comprises two primary components: the Compressed Communication Subroutine (as detailed in \cref{alg:c-comm}), which enables efficient compressed communication among networked agents, and the unified \textbf{OCGT} algorithm (as detailed in \cref{alg:unified-ocgt}), which integrates either bandit or stochastic gradient feedback with consensus-based gradient tracking. Below is a detailed and cohesive description of each component.

The main objective of the Compressed Communication Subroutine is to reduce the communication overhead while maintaining a consensus mechanism across agents.
It takes as inputs a local estimate $\theta_i \in \mathbb{R}^d$, historical estimates $\hat{h}_i, \tilde{h}_i \in \mathbb{R}^d$, and a mixing parameter $\alpha \in (0,1)$. In line 1,  the difference \(\theta_i - \hat{h}_i\) is compressed via a chosen compression operator \(\mathcal{C}(\cdot)\), which can be either biased or unbiased, on condition that  Assumption \ref{compress} is satisfied. The result is a compressed update $q_i$, which each agent sends to its neighbors. Next, in line 2, the local agent aggregates the compressed updates received from its neighbors, scaled by weights $w_{ij}$, to form a consensus-based estimate $\tilde{\theta}_i$. In line 3, the local estimate $\hat{\theta}_i$ is updated by incorporating the locally compressed value $q_i$. Finally,  both $\hat{h}_i$ and $\tilde{h}_i$ are refreshed through a momentum-like update using the parameter $\alpha$. This design is inspired by a class of compressed distributed optimization algorithms \cite{seide20141, richtarik2021ef21, liao2022compressed, zhao2024faster}. Recent works \cite{10833724,2025107857} analyze nonlinear perturbations such as log-scale quantization, where convergence is established under a rigid sector-bound property of the quantizer. In contrast, Algorithm 1 has broader applicability and is more robust to noise because it leverages historical information.

Building on the Compressed Communication Subroutine, the unified \textbf{OCGT} algorithm addresses two settings distinguished by gradient feedback type: bandit feedback (\textbf{OCGT-BF}) and stochastic gradient (\textbf{OCSGT}).
Each agent starts with initial decision variables $x_{i,0}$ and gradient tracking variables $\hat{h}_{i,0}^x, \hat{h}_{i,0}^y$, along with their consensus counterparts. At each iteration, agents first compress and communicate both the local decision variables $x_{i,t}$ and tracking variables $y_{i,t}$. The mixing parameters $\alpha_x$ and $\alpha_y$ may differ to allow more flexible tuning of the update dynamics for decision and gradient variables. After communication, agents update their local decision variables by combining two terms:  a correction proportional to \(\hat{x}_{i,t} - \tilde{x}_{i,t}\), which reflects the deviation between local and consensus estimates, and a gradient descent-like term with stepsize \(\eta\).  



The gradient computation distinguishes the two variants within the unified framework: In the \textbf{OCGT-BF} variant (Option I), bandit feedback is integrated via Gaussian smoothing. Each agent samples a random direction $u_{i,t}$ from a standard normal distribution and computes the gradient approximation $G_{i,t}$ through a finite difference approach. In the \textbf{OCSGT} variant (Option II), each agent directly computes stochastic gradients $G_{i,t}:= \nabla f_{i,t}(x_{i,t}, \xi_{i,t})$ using randomly sampled data ${\xi_{i,t}}$ available at iteration $t$.

Finally, the gradient tracking variable $y_{i,t}$ is updated to aggregate current and prior gradient approximations, refining the global gradient estimate despite compression effects.

\begin{remark}
    Recent studies \cite{10833724,2025107857} investigate nonlinear perturbations such as log-scale quantization through direct compression, expressed as \( q_i = C(\theta_i) \), where convergence is guaranteed under a strict sector-bound condition on the quantizer. In contrast, our work focuses on online learning. The proposed error-compensated framework utilizes the mechanism \( q_i = C(\theta_i - h_i) \), where \( h_i \) preserves historical information to enable a broader range of compressors by actively correcting quantization bias, rather than relying on the rigid structural constraints imposed by the nonlinearity. Consequently, the approach mitigates error accumulation in dynamic environments and achieves regret bounds of the same order as uncompressed methods, while offering greater flexibility in compressor selection. In contrast to \cite{10833724}, which addresses directed balanced graphs, our analysis is currently limited to undirected graphs, and extending it to directed settings remains an important direction for future work.
\end{remark}

\begin{remark}
    At each iteration, each agent performs only first-order operations (vector additions, scalar–vector multiplications, and one local oracle call) together with a compressed consensus step with its neighbors, leading to a per-iteration cost of order \(\mathcal{O}(m(1+|\mathcal{N}_i|))\) for an \(m\)-dimensional decision variable and node degree \(|\mathcal{N}_i|\). Aggregated over a network with \(n\) agents, \(E\) edges, and a horizon of \(T\) iterations, the total arithmetic cost is thus \(\mathcal{O}(T m (n+2E))\) along with the cost of local oracle evaluations, which is polynomial in the network scales. Therefore, the resulting computational complexity is suitable for real-time implementation in large-scale systems.
\end{remark}

\subsection{Dynamic Regret Analysis}
We now analyze the performance of \cref{alg:unified-ocgt}, focusing on the variance of the gradient approximation and the resulting dynamic regret bounds.

The following lemma bounds the average variance of the one-point bandit gradient estimator used in \textbf{OCGT-BF}, relating it to consensus error, optimality gap, and gradient heterogeneity.

\begin{lemma}\label{Estimated variance}
Let Assumptions \ref{smooth} and \ref{assump:heterogeneity} hold. Consider the gradient approximation \(G_{i,t}(x_{i,t},u_{i,t})\) generated by \textbf{OCGT-BF} for any \(i\in \mathcal{V}\). Then, for any \(t \ge 0\),
\begin{align*}
    &\frac{1}{n}\sum_{i=1}^n \mathbb{E}_{u_{i,t}}[\|G_{i,t}-\nabla f_{i,t}(x_{i,t})\|^2] \nonumber \\
    &\le v^2_t M_L + \frac{8(m+4) L^2}{n} \|X_t - \mathbf{1}\bar{x}_t\|^2 \nonumber \\
    &\quad + 8(m+4) L^2 \|\bar{x}_t - x_t^\star\|^2 + 4(m+4)\bar{\sigma}_t^2,
\end{align*}
where $M_L := L^2 \left(\frac{1}{2}(m+6)^3+\frac{1}{4}(m+3)^2\right)$. 
\end{lemma}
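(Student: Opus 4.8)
The plan is to begin from the exact bias--variance decomposition of each agent's estimator error and then reduce the whole bound to controlling the true gradient norm $\|\nabla f_{i,t}(x_{i,t})\|^2$, which is precisely where the three structural quantities (consensus error, optimality gap, and heterogeneity) will be made to appear.

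First, fix an agent $i$ and abbreviate $g_{i,t} := \nabla f_{i,t}(x_{i,t})$. Since $\mathbb{E}_{u_{i,t}}[\cdot]$ conditions on the fixed iterate $x_{i,t}$, both $g_{i,t}$ and $\mathbb{E}_{u_{i,t}}[G_{i,t}]$ are deterministic, so the cross term vanishes and
$$\mathbb{E}_{u_{i,t}}\bigl[\|G_{i,t} - g_{i,t}\|^2\bigr] = \mathbb{E}_{u_{i,t}}\bigl[\|G_{i,t} - \mathbb{E}_{u_{i,t}}[G_{i,t}]\|^2\bigr] + \bigl\|\mathbb{E}_{u_{i,t}}[G_{i,t}] - g_{i,t}\bigr\|^2.$$
The variance term is bounded above by the raw second moment $\mathbb{E}_{u_{i,t}}[\|G_{i,t}\|^2]$, to which I apply the bound \eqref{bandit_2} of \cref{bandit}; the squared-bias term is handled by squaring \eqref{bandit_1}. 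Together these produce a $v_t^2$-proportional contribution (which collects into $M_L$ after bounding $L_i \le L$) plus the dominant data-dependent term $2(m+4)\|g_{i,t}\|^2$.

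The heart of the argument is then bounding $\frac{1}{n}\sum_{i=1}^n \|g_{i,t}\|^2$. Here I would insert $\nabla f_{i,t}(x_t^\star)$, use $L_i$-smoothness from \cref{smooth} to get $\|\nabla f_{i,t}(x_{i,t}) - \nabla f_{i,t}(x_t^\star)\| \le L_i\|x_{i,t} - x_t^\star\|$, and invoke \cref{assump:heterogeneity} for $\|\nabla f_{i,t}(x_t^\star)\|^2 \le \sigma_{i,t}^2$. Applying $\|a+b\|^2 \le 2\|a\|^2 + 2\|b\|^2$ twice---once to separate the smoothness and heterogeneity pieces, and once to split $\|x_{i,t} - x_t^\star\|^2 \le 2\|x_{i,t} - \bar{x}_t\|^2 + 2\|\bar{x}_t - x_t^\star\|^2$---gives
$$\|g_{i,t}\|^2 \le 4L_i^2\|x_{i,t} - \bar{x}_t\|^2 + 4L_i^2\|\bar{x}_t - x_t^\star\|^2 + 2\sigma_{i,t}^2.$$
Averaging over $i$, bounding $L_i \le L$, and recognizing $\sum_i \|x_{i,t} - \bar{x}_t\|^2 = \|X_t - \mathbf{1}\bar{x}_t\|^2$ together with $\frac{1}{n}\sum_i \sigma_{i,t}^2 = \bar{\sigma}_t^2$, then multiplying by the factor $2(m+4)$, yields exactly the consensus, optimality-gap, and heterogeneity terms with the stated coefficients $\tfrac{8(m+4)L^2}{n}$, $8(m+4)L^2$, and $4(m+4)$.

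I expect the main obstacle to be not any single hard inequality but the correct routing of the gradient norm into precisely the three geometric quantities that the subsequent regret analysis tracks. In particular, the smoothness step must be anchored at the time-varying optimum $x_t^\star$ rather than at $\bar{x}_t$, so that \cref{assump:heterogeneity} becomes applicable and the optimality gap $\|\bar{x}_t - x_t^\star\|^2$ survives as a separate, independently controllable term; anchoring elsewhere would entangle these quantities and break the later analysis. The $v_t^2$ constants are pure bookkeeping: the variance floor from \eqref{bandit_2} supplies the $\tfrac12 L^2(m+6)^3$ piece and the squared bias from \eqref{bandit_1} supplies the remaining term inside $M_L$.
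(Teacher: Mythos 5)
Your proposal is correct and follows essentially the same route as the paper's proof: the identical bias--variance decomposition, the same use of \cref{bandit} (second moment via \eqref{bandit_2}, squared bias via \eqref{bandit_1}), and the same routing of $\|\nabla f_{i,t}(x_{i,t})\|^2$ through the anchor $x_t^\star$ with two applications of $\|a+b\|^2 \le 2\|a\|^2 + 2\|b\|^2$ before averaging over agents. One incidental remark: your bookkeeping correctly yields the squared-bias contribution $\frac{v_t^2}{4}L^2(m+3)^3$, which matches the $M_L$ defined inside the paper's proof, so the exponent $(m+3)^2$ appearing in the lemma statement is a typo in the paper rather than a discrepancy in your argument.
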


\begin{proof}
Let $G_{i,t} := G_{i,t}(x_{i,t},u_{i,t})$ be the estimated gradient and $\nabla f_{i,t}(x_{i,t})$ the local full gradient. The variance for agent $i$ is decomposed by
\begin{align}
    &\mathbb{E}_{u_{i,t}}[\|G_{i,t}-\nabla f_{i,t}(x_{i,t})\|^2] \nonumber \\
    &\quad = \mathbb{E}_{u_{i,t}}[\|G_{i,t}\|^2] - \|\mathbb{E}_{u_{i,t}}[G_{i,t}]\|^2 \nonumber \\
    &\qquad + \|\mathbb{E}_{u_{i,t}}[G_{i,t}] - \nabla f_{i,t}(x_{i,t})\|^2. \label{Le2_1}
\end{align}
From \cref{bandit} and letting $L=\max_i L_i$, the bias term is bounded by \cref{bandit_1}, i.e., 
\begin{align}
    \|\mathbb{E}_{u_{i,t}}[G_{i,t}] - \nabla f_{i,t}(x_{i,t})\|^2 \le \frac{v_t^2}{4} L^2 (m+3)^3. \label{Le2_bias}
\end{align}
Similarly, from \cref{bandit}, the variance term is bounded by 
\begin{align}
    &\mathbb{E}_{u_{i,t}}[\|G_{i,t}\|^2] - \|\mathbb{E}_{u_{i,t}}[G_{i,t}]\|^2 \nonumber \\
    &\quad \le \mathbb{E}_{u_{i,t}}[\|G_{i,t}\|^2] \nonumber \\
    &\quad \le 2(m+4)\left\|\nabla f_{i, t}(x_{i,t})\right\|^2 +\frac{v_t^2}{2} L^2(m+6)^3. \label{Le2_var_part}
\end{align}
Plugging Eqs. (\ref{Le2_bias}) and (\ref{Le2_var_part}) into \cref{Le2_1} yields
\begin{align}
    &\mathbb{E}_{u_{i,t}}[\|G_{i,t}-\nabla f_{i,t}(x_{i,t})\|^2] \nonumber \\
    &\quad \le v_t^2 L^2 \left(\frac{1}{2}(m+6)^3+\frac{1}{4}(m+3)^3\right) \nonumber \\
    & \qquad + 2(m+4)\|\nabla f_{i, t}(x_{i,t})\|^2. \label{Le2_single_agent}
\end{align}
Define $M_L := L^2 \left(\frac{1}{2}(m+6)^3+\frac{1}{4}(m+3)^3\right)$. We next bound the gradient term $\|\nabla f_{i, t}(x_{i,t})\|^2$. Using \cref{smooth} and the fact that $\|a+b\|^2 \le 2a^2+2b^2$, it holds that 
\begin{align}
    &\|\nabla f_{i, t}(x_{i,t})\|^2 \nonumber \\
    &\quad\le 2\|\nabla f_{i,t}(x_{i,t}) - \nabla f_{i,t}(x_t^\star)\|^2 + 2\|\nabla f_{i,t}(x_t^\star)\|^2 \nonumber \\
    &\quad\le 2 L^2 \|x_{i,t} - x_t^\star\|^2 + 2\|\nabla f_{i,t}(x_t^\star)\|^2 \nonumber \\
    &\quad\le 4 L^2 \|x_{i,t} - \bar{x}_t\|^2 + 4 L^2 \|\bar{x}_t - x_t^\star\|^2 + 2\|\nabla f_{i,t}(x_t^\star)\|^2. \label{Le2_grad_bound}
\end{align}
Plugging \cref{Le2_grad_bound} into \cref{Le2_single_agent} gives
\begin{align}
    &\mathbb{E}_{u_{i,t}}[\|G_{i,t}-\nabla f_{i,t}(x_{i,t})\|^2] \nonumber \\
    &\le v^2_t M_L + 8(m+4) L^2 \|x_{i,t} - \bar{x}_t\|^2 \nonumber \\
    &\quad+ 8(m+4) L^2 \|\bar{x}_t - x_t^\star\|^2 + 4(m+4)\|\nabla f_{i,t}(x_t^\star)\|^2. \nonumber
\end{align}
Summing the above equation over $i$ and dividing by $n$ under \cref{assump:heterogeneity}, it can be derived that 
\begin{align}
    &\frac{1}{n}\sum_{i=1}^n \mathbb{E}_{u_{i,t}}[\|G_{i,t}-\nabla f_{i,t}(x_{i,t})\|^2] \nonumber \\
    &\le v^2_t M_L + \frac{8(m+4) L^2}{n} \|X_t - \mathbf{1}\bar{x}_t\|^2 \nonumber \\
    &\quad + 8(m+4) L^2 \|\bar{x}_t - x_t^\star\|^2 + 4(m+4)\bar{\sigma}_t^2.
\end{align}
This completes the proof.
\end{proof}

Define $\mathrm{U}_t := \big[ \Phi_t^x, \Phi_t^y, \Psi_t, \Omega_t^x, \Omega_t^y \big]^\top$,  where $\Phi_t^x:=\mathbb{E}[\|X_t-\mathbf{1} \bar{y}_t\|^2] $, $ \Phi_t^y:=\mathbb{E}[\|Y_t-\mathbf{1} \bar{y}_t\|^2]$, $\Psi_t:=\mathbb{E}[\|\bar{x}_t-x_i^\star\|^2] $, $\Omega_t^x:=\mathbb{E}[\|X_t-H_t^x\|^2]$ and $\Omega_t^y:=\mathbb{E}[\|Y_t-H_t^y\|^2] $. The following theorem demonstrates the dynamic regret of \textbf{OCGT-BF} algorithm.

\begin{lemma}\label{lem:system_ineq}
Let Assumptions \ref{graph}-\ref{compress} hold. Consider the sequence $\{x_{i,t}\}_{t=1}^T$ generated by \textbf{OCGT-BF}. For any agent $i \in \mathcal{V}$ and $t < T$, the following linear system of inequalities holds
\begin{align}
\mathrm{U}_{t+1} \preceq G(\eta)\mathrm{U}_t + \mathrm{V}_t + v_t^2 M_L \mathbf{d}_1 + \bar{\sigma}_t^2\mathbf{d}_2, \label{Lemm14_0}
\end{align}
where the symbol $\preceq$ represents an element-wise comparison between vectors. Moreover, there exists a constant $\eta^\star > 0$, defined in the supplementary file, such that for all  $\eta \in (0, \eta^\star]$, the spectral radius satisfies $0 < \rho(G(\eta)) < 1$. The vectors $\mathrm{V}_t, \mathbf{d}_1, \mathbf{d}_2 \in \mathbb{R}^{5}$ are defined as
    \begin{align}
        \mathrm{V}_t & := [ 0 , a_1\mathbf{p}_t^2+a_2\mathbf{v}_t^2 , a_4\mathbf{v}_t , 0 , a_yn \mathbf{p}_t^2+a_6\mathbf{v}_t^2 ]^\top, \nonumber\\
        \mathbf{d}_1 &:= \begin{bmatrix} 0 \\ a_3 \\ a_5 \\ a_x^\prime \\ a_7 \end{bmatrix}, \quad 
        \mathbf{d}_2 := 4(m+4) \begin{bmatrix} 0 \\ a_3 \\ a_5 \\ a_x^\prime \\ a_7 \end{bmatrix}, 
    \end{align}
    where the coefficients are defined in Appendix.
    
\end{lemma}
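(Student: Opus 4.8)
The plan is to derive a scalar recursive inequality for each of the five coordinates of $\mathrm{U}_t$, collect the resulting coefficients into the matrix $G(\eta)$ and the three forcing vectors, and then establish $\rho(G(\eta))<1$ through a nonnegative-matrix (Perron--Frobenius) argument rather than by computing eigenvalues explicitly.

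First I would rewrite \cref{alg:c-comm,alg:unified-ocgt} in stacked matrix form. The initialization sets $\tilde{H}^x_0 = W\hat{H}^x_0$, and I would verify by induction that the invariant $\tilde{H}^x_t = W\hat{H}^x_t$ (and its $y$-analogue) survives the momentum updates of \cref{alg:c-comm}; this gives the key simplification $\hat{X}_t-\tilde{X}_t=(I-W)\hat{X}_t$ and $\hat{Y}_t-\tilde{Y}_t=(I-W)\hat{Y}_t$, so the updates collapse to $X_{t+1}=X_t-\eta((I-W)\hat{X}_t+Y_t)$ and $Y_{t+1}=Y_t-\eta(I-W)\hat{Y}_t+(G_{t+1}-G_t)$. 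Since $\hat{X}_t=H^x_t+\mathcal{C}(X_t-H^x_t)$, \cref{compress} bounds the compressed-estimate error by $\mathbb{E}[\|\hat{X}_t-X_t\|^2]\le\omega\,\Omega^x_t$, so I would write $\hat{X}_t=X_t+(\hat{X}_t-X_t)$ to separate each update into an ideal uncompressed step plus a compression perturbation controlled by $\Omega^x_t,\Omega^y_t$.

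Next I would derive the five recursions. For $\Phi^x_t$ and $\Phi^y_t$ I project with $\Pi=I-\tfrac1n\mathbf{1}\mathbf{1}^\top$, use the commuting contraction $\|\Pi W\|=\lambda<1$ from \cref{graph} (so $\|(I-\eta(I-W))\Pi\|\le 1-\eta(1-\lambda)$ for small $\eta$), and split off the compression and gradient-difference terms with a weighted Young inequality $\|a+b\|^2\le(1+\gamma)\|a\|^2+(1+\gamma^{-1})\|b\|^2$; this couples $\Phi^x$ to $\Phi^y,\Omega^x$ and $\Phi^y$ to itself and to $\|G_{t+1}-G_t\|^2$. For the latter I decompose $G_{i,t+1}-G_{i,t}$ into two estimation errors (bounded via \cref{Estimated variance}, which produces the $v_t^2M_L$, $\bar{\sigma}_t^2$, $\Phi^x$ and $\Psi$ contributions), a smoothness term $L\|x_{i,t+1}-x_{i,t}\|$ that feeds back through the update magnitude, and a time-variation term bounded by $\mathbf{p}_t$, with the optimum shift entering as $\mathbf{v}_t$. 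For $\Psi_t$ I use the tracking identity $\bar{y}_t=\bar{G}_t$ (preserved from $y_{i,0}=G_{i,0}$), giving $\bar{x}_{t+1}=\bar{x}_t-\eta\bar{G}_t$; expanding $\|\bar{x}_{t+1}-x^\star_{t+1}\|^2$ and invoking $\mu$-strong convexity and $L$-smoothness (\cref{smooth}) produces the contraction $1-2\eta\mu+\mathcal{O}(\eta^2)$, the $\tfrac{L}{\sqrt n}\|X_t-\mathbf{1}\bar{x}_t^\top\|$ coupling to $\Phi^x$, the \cref{Estimated variance} forcing, and a linear drift $a_4\mathbf{v}_t$ from the cross term with $x^\star_t-x^\star_{t+1}$. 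For $\Omega^x_t,\Omega^y_t$ I use $X_t-H^x_{t+1}=(X_t-H^x_t)-\alpha\mathcal{C}(X_t-H^x_t)$ and the error-feedback estimate $\mathbb{E}\|(X_t-H^x_t)-\alpha\mathcal{C}(X_t-H^x_t)\|^2\le(1-c_\alpha)\Omega^x_t$ for suitable $\alpha$, together with the step term $\|X_{t+1}-X_t\|^2$; this yields a contraction in the $\Omega$-directions that already holds at $\eta=0$.

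Assembling these gives $\mathrm{U}_{t+1}\preceq G(\eta)\mathrm{U}_t+\mathrm{V}_t+v_t^2M_L\mathbf{d}_1+\bar{\sigma}_t^2\mathbf{d}_2$, where every entry of $G(\eta)$ is a nonnegative polynomial in $\eta$ and $G(0)$ has unit diagonal entries in the $\Phi^x,\Phi^y,\Psi$ rows and strictly contractive entries in the $\Omega$ rows. To prove $\rho(G(\eta))<1$ I would use $G(\eta)\succeq 0$ entrywise and the Perron--Frobenius criterion: exhibit a strictly positive vector $\xi\in\mathbb{R}^5$ with $G(\eta)\xi\prec\xi$ for all $\eta\in(0,\eta^\star]$, which forces $\rho(G(\eta))<1$. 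The contraction needed in the three initially non-contracting rows is supplied by the first-order terms $-\eta(1-\lambda)$ and $-2\eta\mu$; one then weights the $\Omega$-coordinates of $\xi$ large enough to absorb the $\mathcal{O}(\eta)$ cross-couplings from the compression perturbation and chooses $\eta^\star$ small enough that the residual $\mathcal{O}(\eta^2)$ terms cannot overturn the $\mathcal{O}(\eta)$ contraction. This final certification --- finding a single $\xi$ that makes all five coordinates contract simultaneously and pinning down the explicit threshold $\eta^\star$ --- is the main obstacle, because the diagonal degeneracy at $\eta=0$ means the result rests on a delicate balance between the linear-in-$\eta$ contraction and the coupling coefficients rather than on any dominant diagonal term.
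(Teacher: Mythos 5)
Your first half tracks the paper closely and is sound: the stacked reformulation (with the invariant $\tilde{H}^x_t = \mathbf{W}\hat{H}^x_t$, which the paper uses implicitly in Eqs.~(\ref{update_X})--(\ref{update_Y}) and which your induction makes explicit), the separation of each update into an ideal step plus a compression perturbation controlled by $\Omega^x_t,\Omega^y_t$, and the five coordinate recursions built from weighted Young inequalities, Lemma~\ref{Estimated variance}, smoothness/strong convexity (Lemma~\ref{sc-lemma}), the decomposition of $G_{t+1}-G_t$, and the error-feedback contraction are exactly the content of the paper's Lemmas~\ref{XX}--\ref{lem:YdiffH_bound2}.

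The genuine gap is in the spectral-radius certification, which is the lemma's actual crux. Your Perron--Frobenius criterion (exhibit $\xi \succ 0$ with $G(\eta)\xi \prec \xi$) is valid in principle, but you never construct $\xi$, and the asymptotic bookkeeping you propose would fail. Under your own accounting, the consensus margins in the $\Phi^x,\Phi^y$ rows are $\Theta(\eta)$ (indeed $w_2 = 1-\rho_\eta^2 = \Theta(\eta)$ when mixing enters as $\eta(\mathbf{I}-\mathbf{W})$), whereas the coupling of $\Phi^y$ to $\Phi^x$ through $\mathbb{E}[\|G_{t+1}-G_t\|^2]$ carries the coefficient $c_1 = \frac{30(m+4)L^2 w_1 + 2w_1}{w_2}$, which is $\Theta(1/\eta)$ in that regime --- not one of the $\mathcal{O}(\eta)$ cross-couplings you plan to absorb. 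The two-cycle $\Phi^x \to \Phi^y \to \Phi^x$ then has gain $\frac{2w_1\eta^2}{w_2}\cdot c_1 = \Theta(1)$, exceeding the $\Theta(\eta^2)$ product of the two row margins, so no positive $\xi$ (constant or $\eta$-dependent) can satisfy $G(\eta)\xi \prec \xi$; since existence of such a $\xi$ is equivalent to $\rho(G(\eta))<1$, the sketch cannot close as stated. The paper sidesteps this by treating $\rho_\eta$ (hence $w_1,w_2$) as fixed constants in the spectral analysis, so that at $\eta=0$ only the $\Psi$ diagonal $1-\frac{2\mu\eta}{3}$ degenerates --- not three diagonals, as your premise has it --- and it then certifies $\rho(G(\eta))<1$ not via a certificate vector but by bounding each diagonal in $(0,1)$ and proving $\det(I-G(\eta))>0$ through a cofactor expansion along the third row: $\det A(\eta)$ carries an explicit $\eta^2$ factor, $\det B(0) = \frac{w_2^2}{4}(1-b_x)(1-b_y) > 0$, whence $\det(I-G(\eta)) = \eta\,\frac{3\mu}{2}D - \eta^2 R(\eta) > 0$ for $\eta < \eta_{\mathrm{det}}$, and $\eta^\star = \min\{\eta_{\mathrm{diag}},\eta_{\mathrm{det}}\}$. (One may object that diagonal bounds plus a single determinant do not by themselves imply $\rho<1$ for a nonnegative matrix, but that is the paper's exposure, not yours.) What is missing from your proposal is precisely the quantitative step you label ``the main obstacle'': without either the paper's constant-$w_2$ framing or an explicit resolution of the $\Phi^x\leftrightarrow\Phi^y$ loop gain, the claim $\rho(G(\eta))<1$ for all $\eta\in(0,\eta^\star]$ remains unproved.
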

\begin{proof}
    See Appendix.
\end{proof}
    
\begin{theorem}\label{Th1}
Suppose Assumptions \ref{graph}-\ref{compress} hold. Consider the sequence $\{x_{i,t}\}_{t=1}^T$ generated by \textbf{OCGT-BF} for any agent $i\in \mathcal{V}$.  Then, for all  $\eta \in (0, \eta^\star]$ and $T>2$, such that 
$$
\begin{aligned}
&R_T \le \frac{L \lambda}{2(1-\tilde{\rho})} \Bigg( \|\mathrm{U}_0\| + C_p \sum_{t=0}^{T-1} \mathbf{p}_t^2 + C_v \sum_{t=0}^{T-1}\mathbf{v}_t^2 \\
        &\qquad + C_d M_L\sum_{t=0}^{T-1} v_t^2   + 4(m+4) C_d \sum_{t=0}^{T-1} \bar{\sigma}_t^2 \Bigg), 
\end{aligned}
$$
where 
$C_p, C_v$ and $C_d$ are constants defined as
$$ C_p = \sqrt{2a_1^2 + 2(a_y n)^2}, \quad
    C_v = \sqrt{2a_2^2 + a_4^2 + 2a_6^2},$$ and $C_d = \sqrt{a_3^2 + a_5^2 + (a_x^\prime)^2 + a_7^2}$.   
\end{theorem}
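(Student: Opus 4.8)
The plan is to convert the dynamic regret into a weighted sum of the optimality-gap coordinate of the state vector $\mathrm{U}_t$, and then to control that sum by unrolling the linear matrix recursion of \cref{lem:system_ineq}. First I would bound the instantaneous regret via smoothness: since $f_t$ is $L$-smooth and $x_t^\star$ minimizes $f_t$, we have $\nabla f_t(x_t^\star)=0$, so the descent inequality gives $f_t(\bar{x}_t)-f_t(x_t^\star)\le\langle\nabla f_t(x_t^\star),\bar{x}_t-x_t^\star\rangle+\tfrac{L}{2}\|\bar{x}_t-x_t^\star\|^2=\tfrac{L}{2}\|\bar{x}_t-x_t^\star\|^2$. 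Taking expectations and summing over $t$ yields $R_T\le\tfrac{L}{2}\sum_{t=1}^T\Psi_t$, where $\Psi_t=\mathbb{E}[\|\bar{x}_t-x_t^\star\|^2]$ is exactly the third coordinate of $\mathrm{U}_t$. Because every coordinate of $\mathrm{U}_t$ is nonnegative, $\Psi_t\le\|\mathrm{U}_t\|$, so it suffices to bound $\sum_{t=1}^T\|\mathrm{U}_t\|$.

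Next I would unroll the recursion $\mathrm{U}_{t+1}\preceq G(\eta)\mathrm{U}_t+\mathrm{W}_t$ from \cref{lem:system_ineq}, abbreviating $\mathrm{W}_t:=\mathrm{V}_t+v_t^2 M_L\mathbf{d}_1+\bar{\sigma}_t^2\mathbf{d}_2$. Since $G(\eta)$ is entrywise nonnegative (by construction) and all iterates are nonnegative, the element-wise inequality propagates, giving $\mathrm{U}_t\preceq G(\eta)^t\mathrm{U}_0+\sum_{k=0}^{t-1}G(\eta)^{t-1-k}\mathrm{W}_k$. Taking norms, summing over $t$, and swapping the order of summation in the convolution term turns each inner sum into a truncated geometric series that I bound using a uniform decay estimate $\|G(\eta)^j\|\le\lambda\,\tilde{\rho}^{\,j}$ for the matrix powers. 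This produces $\sum_{t=1}^T\|\mathrm{U}_t\|\le\frac{\lambda}{1-\tilde{\rho}}\bigl(\|\mathrm{U}_0\|+\sum_{k=0}^{T-1}\|\mathrm{W}_k\|\bigr)$, which already accounts for the prefactor $\frac{L\lambda}{2(1-\tilde{\rho})}$.

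The hard part will be establishing the uniform geometric bound $\|G(\eta)^j\|\le\lambda\,\tilde{\rho}^{\,j}$: the conclusion $\rho(G(\eta))<1$ of \cref{lem:system_ineq} only controls the asymptotic growth rate and does not by itself bound the powers uniformly. To handle this I would fix any $\tilde{\rho}\in(\rho(G(\eta)),1)$ and invoke Gelfand's formula (equivalently, a Jordan-form or submultiplicative-norm argument) to produce a finite constant $\lambda\ge1$ for which the bound holds for every $j\ge0$; the resulting $\lambda$ and $\tilde{\rho}$ are the constants appearing in the theorem. Care is also needed to verify the nonnegativity of $G(\eta)$ that legitimizes the element-wise unrolling in the previous step.

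Finally I would expand the input norm by the triangle inequality, $\|\mathrm{W}_k\|\le\|\mathrm{V}_k\|+M_L v_k^2\|\mathbf{d}_1\|+\bar{\sigma}_k^2\|\mathbf{d}_2\|$. A direct computation gives $\|\mathbf{d}_1\|=\sqrt{a_3^2+a_5^2+(a_x^\prime)^2+a_7^2}=C_d$ and $\|\mathbf{d}_2\|=4(m+4)C_d$, reproducing the $\sum_t v_t^2$ and $\sum_t\bar{\sigma}_t^2$ terms. For $\|\mathrm{V}_k\|$ I would apply $(a+b)^2\le 2a^2+2b^2$ to the two coordinates containing $a_1\mathbf{p}_k^2+a_2\mathbf{v}_k^2$ and $a_y n\,\mathbf{p}_k^2+a_6\mathbf{v}_k^2$, collect the $\mathbf{p}_k$- and $\mathbf{v}_k$-contributions (absorbing the coordinate $a_4\mathbf{v}_k$ into the $\mathbf{v}_k$ group), and read off $C_p=\sqrt{2a_1^2+2(a_y n)^2}$ and $C_v=\sqrt{2a_2^2+a_4^2+2a_6^2}$. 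Summing over $k$ and combining with the geometric-sum bound completes the stated regret inequality.
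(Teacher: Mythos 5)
Your proposal is correct and follows essentially the same route as the paper: bound the instantaneous regret by $\tfrac{L}{2}\mathbb{E}[\|\bar{x}_t-x_t^\star\|^2]$ using $L$-smoothness and $\nabla f_t(x_t^\star)=0$, unroll the linear recursion of \cref{lem:system_ineq} into a geometric convolution, swap summation order, and read off $C_p$, $C_v$, $C_d$ from the norms of $\mathrm{V}_t$, $\mathbf{d}_1$, $\mathbf{d}_2$ exactly as the paper does. The only difference is the technical device for converting $\rho(G(\eta))<1$ into uniform geometric decay: the paper invokes a Horn--Johnson matrix norm $\|\cdot\|_\gamma$ with $\|G(\eta)\|_\gamma\le\rho(G(\eta))+\epsilon=\tilde{\rho}$ together with norm-equivalence constants $\lambda=\lambda_1\lambda_2$, whereas you unroll the element-wise inequality first (using entrywise nonnegativity of $G(\eta)$) and then apply Gelfand's formula to get $\|G(\eta)^j\|\le\lambda\,\tilde{\rho}^{\,j}$ --- an equivalent argument that, if anything, treats the passage from the element-wise relation $\preceq$ to a norm inequality more carefully than the paper's direct application of $\|\cdot\|_\gamma$.
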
 
\begin{proof}
See Appendix.
\end{proof}

\begin{remark} \label{rem:Th1_implications}
Theorem \ref{Th1} characterizes the dynamic regret of \textbf{OCGT-BF}. The bound consists of several terms: $\|\mathrm{U}_0\|/(1-\tilde{\rho})$ represents the influence of the initial errors, scaled by a factor related to the convergence rate $\tilde{\rho}$; $\sum \mathbf{p}_t^2$  captures the cumulative functional change over time; $\sum \mathbf{v}_t^2$ captures the cumulative drift of the optimal solution; $\sum \bar{\sigma}_t^2$ reflects the impact of gradient heterogeneity across agents at the optima; $\sum v_t^2$ arises from the bias and variance of the one-point bandit feedback, scaling with the smoothing parameter $v_t$ and dimension $m$ (via $M_L$). Assuming $v_t=v$ leads to a term $C_d M_L v^2 T$. Using a decaying $v_t$ (e.g., $v_t \propto 1/\sqrt{t}$ or $1/t$) could potentially improve this term's dependence on $T$.

If the environment is relatively stable (i.e., $\sum \mathbf{p}_t$, $\sum \mathbf{v}_t$, $\sum \bar{\sigma}_t^2$ grow sub-linearly with $T$) and a constant $v$ is used as in Theorem \ref{Th1}, the regret is dominated by $\mathcal{O}(v^2 T)$. The factor $1/(1-\tilde{\rho})$ relates to the geometric convergence of the underlying dynamics in the absence of noise and environmental changes, but the persistent noise/bias from bandit feedback and environmental shifts lead to the cumulative terms. The dimension dependence $m$ appears primarily through $M_L$ in the bandit noise term.
\end{remark}

\begin{remark}
    Theoretically, setting $v_t=\frac{1}{t}$ would decrease faster and eliminate the $T$ dependency. From the one-point gradient estimation (\ref{bandit feed}), smaller $v_t$ values yield more accurate gradient approximations, potentially reducing the distance from the local gradient significantly in \cref{bandit_1}. However, excessively small $v_t$ values introduce numerical stability challenges in practical implementations, necessitating a balanced approach that considers both theoretical optimality and computational feasibility.
\end{remark}

\begin{remark}
   Theorem 1 shows that the dynamic regret prefactor scales as \(1/(1-\tilde{\rho})\), where \(\tilde{\rho} = \rho(G(\eta))\) is the spectral radius that determines the geometric contraction rate of the recursion. The network topology affects the analysis only through the mixing matrix \(W\). The value of \(\tilde{\rho}\) increases with the magnitude of the second-largest eigenvalue of \(W\), which amplifies the cumulative consensus error. Empirical studies suggest that higher network clustering reduces algebraic connectivity. This reduction weakens the consensus mixing process and slows convergence \cite{Doostmohammadian2024Clustering}. As a result, graphs with stronger clustering tend to yield a larger \(1/(1-\tilde{\rho})\), leading to a tighter regret bound. In practice, the tuning of graph-dependent parameters such as \(\alpha_x\), \(\alpha_y\), and \(\eta^*\) should be guided by the network’s spectral properties.
\end{remark}


Define $\mathcal{F}_t$ as the $\sigma$-algebra that is generated by the history up to time $t-1$. The following assumption is commonly used in distributed stochastic optimization and federated learning \cite{xin2020variance}.

\begin{assumption}
    For any $t \ge 0$, the stochastic gradient of each $f_{i,t}$ is unbiased and has bounded variance, i.e.,
    \begin{align}
        &\mathbb{E}[\nabla f_{i,t}(x_{i,t},\xi_{i,t})\mid \mathcal{F}_t]=\nabla f_{i,t}(x_{i,t}), \nonumber\\
        &\mathbb{E}[\|\nabla f_{i,t}(x_{i,t},\xi_{i,t})-\nabla f_{i,t}(x_{i,t})\|^2 \mid \mathcal{F}_t] \le \hat{\sigma}^2_t, \nonumber
    \end{align}
where $\nabla f_{i,t}(x_{i,t},\xi_{i,t})$ is a stochastic gradient of the loss function  $f_{i,t}(x_{i,t})$ associated  with data sample $\xi_{i,t}$.
    \label{unbia}
\end{assumption}

We also establish the dynamic regret bound for \textbf{OCSGT}, as presented below.

\begin{theorem}\label{Th3} 
Suppose Assumptions \ref{graph}, \ref{smooth}, \ref{compress}, and \ref{unbia} hold.  Let $\{x_{i,t}\}_{t=1}^T$ be the sequence generated by \textbf{OCSGT} for any agent $i\in \mathcal{V}$. Then, for all  $\eta \in (0, \eta^\star]$ and $T>2$, such that 
$$
\begin{aligned}
R_T  \le &\frac{L \lambda}{2(1-\tilde{\rho})} \Big( \|\mathrm{U}_0\| + C_p^\prime \sum_{t=0}^{T-1} \mathbf{p}_t^2 + a_2^\prime \sum_{t=0}^{T-1} \mathbf{v}_t^2 \\
    &+ C_d^\prime \sum_{t=0}^{T-1} \hat{\sigma}_t^2 \Big), 
\end{aligned}
$$
where 
$C_p^\prime$,  $C_d^\prime$ and $a_2^\prime$ are constants that depend on parameters such as $L, \mu, m, n$, and $\rho_{\eta}$.
\end{theorem}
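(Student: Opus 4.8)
The plan is to follow the three-stage argument behind Theorem \ref{Th1}, adapting each stage to the stochastic gradient oracle. The crucial simplification is that the stochastic setting removes the iterate-dependent noise present in the bandit case, so the gradient estimation error is controlled directly by Assumption \ref{unbia} rather than through a dedicated lemma such as Lemma \ref{Estimated variance}.

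First, I would establish the stochastic analogue of Lemma \ref{Estimated variance}. Under Assumption \ref{unbia}, the estimator $G_{i,t}=\nabla f_{i,t}(x_{i,t},\xi_{i,t})$ is conditionally unbiased with variance at most $\hat\sigma_t^2$, so averaging over agents gives
\[
\frac{1}{n}\sum_{i=1}^n \mathbb{E}\big[\|G_{i,t}-\nabla f_{i,t}(x_{i,t})\|^2 \mid \mathcal{F}_t\big]\le \hat\sigma_t^2.
\]
In contrast to the bandit bound, this carries no $v_t^2 M_L$ bias term, no dimension factor $(m+4)$, and—most importantly—no feedback proportional to the consensus error or the optimality gap. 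The stochastic noise therefore enters the state recursion as a purely exogenous, additive perturbation.

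Second, I would derive the stochastic counterpart of the linear matrix inequality in Lemma \ref{lem:system_ineq}. The five coordinates of $\mathrm{U}_t$—the consensus errors $\Phi_t^x,\Phi_t^y$, the optimality gap $\Psi_t$, and the compression errors $\Omega_t^x,\Omega_t^y$—evolve under exactly the same compressed gradient-tracking dynamics of Algorithm \ref{alg:unified-ocgt}. Hence the per-coordinate inequalities are obtained by repeating the steps of the bandit proof: double stochasticity of $\mathbf{W}$ (Assumption \ref{graph}) to bound the averaging operator, the error-feedback contraction under Assumption \ref{compress} with mixing parameters $\alpha_x,\alpha_y$ to control $\Omega_t^x,\Omega_t^y$, and $\mu$-strong convexity with $L$-smoothness (Assumption \ref{smooth}) to bound the descent on $\Psi_t$. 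Wherever the bandit recursion injected $v_t^2 M_L\mathbf{d}_1+\bar\sigma_t^2\mathbf{d}_2$, the stochastic recursion injects a single term $\hat\sigma_t^2\mathbf{d}'$. Because the stochastic variance does not reinforce the state, the coupling entries of the resulting non-negative system matrix are entrywise dominated by those in the bandit case; by Perron--Frobenius its spectral radius is no larger, so the same threshold $\eta^\star$ still guarantees $\rho(G(\eta))=\tilde\rho<1$ for all $\eta\in(0,\eta^\star]$.

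Finally, I would unroll $\mathrm{U}_{t+1}\preceq G(\eta)\mathrm{U}_t+\mathrm{V}_t'+\hat\sigma_t^2\mathbf{d}'$ and sum. Iterating and invoking $\tilde\rho<1$ bounds $\sum_t\|\mathrm{U}_t\|$ by a geometric series, producing the prefactor $1/(1-\tilde\rho)$ and the constant $\lambda$ that arises from passing between the Euclidean norm and the weighted norm in which $G(\eta)$ contracts. The regret is linked to the state through $L$-smoothness: since $\nabla f_t(x_t^\star)=0$, one has $f_t(\bar x_t)-f_t(x_t^\star)\le\frac{L}{2}\|\bar x_t-x_t^\star\|^2=\frac{L}{2}\Psi_t\le\frac{L}{2}\|\mathrm{U}_t\|$, whence $R_T\le\frac{L}{2}\sum_t\|\mathrm{U}_t\|$; collecting the $\mathbf{p}_t$, $\mathbf{v}_t$, and $\hat\sigma_t$ contributions of $\mathrm{V}_t'$ and $\mathbf{d}'$ yields the stated constants $C_p',a_2',C_d'$. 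The main obstacle is Step 2: verifying that the compression coordinates $\Omega_t^x,\Omega_t^y$ contract jointly with the consensus and tracking errors so that the $5\times5$ matrix $G(\eta)$ stays a contraction once the error-feedback terms are folded in. This coupling—not the now-trivial noise bound—is what constrains $\eta^\star$ and determines $\tilde\rho$.
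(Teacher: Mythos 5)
Your overall architecture is the same as the paper's: a five-coordinate state recursion for $\mathrm{U}_t$, a spectral-radius contraction argument, and then unrolling the recursion plus $L$-smoothness with $\nabla f_t(x_t^\star)=0$ to convert $\Psi_t \le \|\mathrm{U}_t\|$ into the regret bound. Your Step 1 is also correct and matches the paper: under Assumption \ref{unbia} the noise enters as a single exogenous additive term $\hat\sigma_t^2\,\mathbf{d}'$, with no $v_t^2 M_L$ bias, no $(m+4)$ factor, and no feedback through the consensus error or optimality gap, so no analogue of Lemma \ref{Estimated variance} is needed. The genuine gap is in your Step 2 --- precisely the step you yourself identify as the main obstacle. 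You dispose of the spectral-radius question by asserting that the stochastic system matrix is entrywise dominated by the bandit matrix $G(\eta)$ of Lemma \ref{lem:system_ineq} and invoking Perron--Frobenius to inherit $\tilde\rho<1$ at the same threshold $\eta^\star$. That domination is false in general. The two matrices are built from different intermediate bounds with different constants, and the paper's explicit $G'(\eta)$ has entries that exceed the corresponding entries of $G(\eta)$ in natural parameter regimes: e.g., the $(2,3)$ entry is $\tfrac{96 n w_1}{w_2}$ in the stochastic case versus $\tfrac{20 n L^2 (m+4) w_1}{w_2}$ in the bandit case, so the stochastic entry is strictly larger whenever $L^2(m+4)<4.8$ (any $L<0.98$ for $m=1$); similarly the $(5,1)$ entry $48 a_y L^2$ (with the stochastic proof's larger $a_y=\tfrac{6\tau_y}{\tau_y-1}$) exceeds the bandit $c_6=\tfrac{15(m+4)L^2+1}{6}a_y$ for essentially all non-tiny $L$; even the diagonal contraction on the optimality gap differs ($1-\tfrac{3\mu\eta}{2}$ versus $1-\tfrac{2\mu\eta}{3}$). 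Neither matrix dominates the other uniformly, so Perron--Frobenius gives you nothing here, and the bandit-case $\eta^\star$ cannot be carried over.

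The repair --- and what the paper actually does --- is to redo the entire spectral analysis for the new matrix $G'(\eta)$: derive the five coordinate recursions afresh under Assumption \ref{unbia} (the paper's Eqs.\ (ETS1)--(ETS5)), bound the diagonal entries to obtain fresh thresholds $\eta_1',\dots,\eta_4'$, then expand $\det(I-G'(\eta))$ along the third row, factor $\eta^2$ out of the first row of the minor $A'(\eta)$, write $\det B'(\eta)=D'+\eta h'(\eta)$ with $D'>0$ by continuity, and conclude $\det(I-G'(\eta))>0$ for $\eta$ below a determinant threshold $\eta_{\mathrm{det}}'$; the $\eta^\star$ appearing in Theorem \ref{Th3} is $\min\{\eta_{\mathrm{diag}}',\eta_{\mathrm{det}}'\}$ from this stochastic-case analysis, not the quantity from Lemma \ref{lem:system_ineq}. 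Once that is in place, your Step 3 (geometric-series unrolling, the norm-equivalence constant $\lambda$, and reading off $C_p', a_2', C_d'$ from $\mathrm{V}_t'$ and $\mathbf{d}'$) goes through exactly as in the proof of Theorem \ref{Th1}.
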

\begin{proof}
    See Appendix.
\end{proof}
\begin{remark}
One can compare Theorem \ref{Th3}  with Theorem \ref{Th1} (bandit feedback) in detail. 
The stochastic gradient version (Theorem \ref{Th3}) replaces the terms related to bandit estimation error ($C_d M_L \sum v_t^2$) and explicit gradient heterogeneity ($4(m+4) C_d \sum \bar{\sigma}_t^2$) from Theorem \ref{Th1} with a single term reflecting the variance of the stochastic gradients ($C_d^\prime \sum \hat{\sigma}_t^2$). Crucially, the bound in Theorem \ref{Th3} does not explicitly depend on the dimension $m$, different from the $M_L$ term in Theorem \ref{Th1}, which scales polynomially with $m$.  If full local gradients are available (i.e., $\hat{\sigma}_t^2 = 0$ for all $t$), the regret bound becomes $\mathcal{O}\left(\frac{\|\mathrm{U}_0\| + C_p^\prime \sum \mathbf{p}_t^2 + a_2^\prime \sum \mathbf{v}_t^2}{1-\tilde{\rho}}\right)$. This recovers a dynamic regret bound similar in structure to that of the original Online Gradient Tracking (\textbf{OGT}) algorithm \cite{zhang2019distributed}, which uses full gradients and no compression.
\end{remark}

\begin{table}[b]
\centering
\caption{Comparison of Algorithm Configurations}
\label{tab:algorithm_configs}
\begin{tabular}{lccc}
\toprule
\textbf{Algorithm} & \textbf{Compression} & \textbf{Feedback Type} & \textbf{Parameter \(v\)} \\
\midrule
\textbf{OGT} \cite{zhang2019distributed} 
  & No & Local Full Gradient & - \\
\textbf{DC-DOGD} \cite{tu2022distributed} 
  & Yes & Local Full Gradient & - \\
\textbf{Wang24} \cite{wang2024distributed} 
  & No & One-point Feedback & \(v = 0.9/t\) \\
\textbf{OCSGT}  
  & Yes & Stochastic Gradient & - \\
\textbf{OCGT-BF}   
  & Yes & One-point Feedback &  \(v = 0.1\) \\
\textbf{OCGT-BF}  
  & Yes & One-point Feedback &  \(v = 0.9/t\) \\
\bottomrule
\end{tabular}
\end{table}

\section{Experiments}\label{experiments}

In this section, we present comprehensive experimental results to validate the performance of the proposed decentralized optimization algorithms under a logistic regression classification task on the spam dataset \cite{metsis2021enron}. Consider a loss function of the form \(f\bigl(\theta,\xi^i\bigr)=\tfrac{1}{M^i}\sum_{s=1}^{M^i}\bigl((1-b_s^i)(a_s^i)^\top\theta - \log\bigl(s\bigl((a_s^i)^\top\theta\bigr)\bigr)\bigr) + \tfrac{r^i}{2}\|\theta\|^2\), where \(M^i\) is the number of samples for agent 
 \(i\), \(r^i\) is a positive regularization parameter inversely proportional to \(M^i\), and \(s(a)=\tfrac{1}{1+e^{-a}}\) denotes the sigmoid function. At each iteration, we randomly select 30 samples from the spam dataset and distribute them in a decentralized manner to 30 agents, ensuring that each agent processes one unique sample per round. The underlying communication topology is an Erdos–Rényi~(E–R) graph \cite{erdds1959random} with connection probability \(p=0.15\).

\begin{figure}[t]
  \centering
  \begin{minipage}{0.85\columnwidth}
    \includegraphics[width=\linewidth]{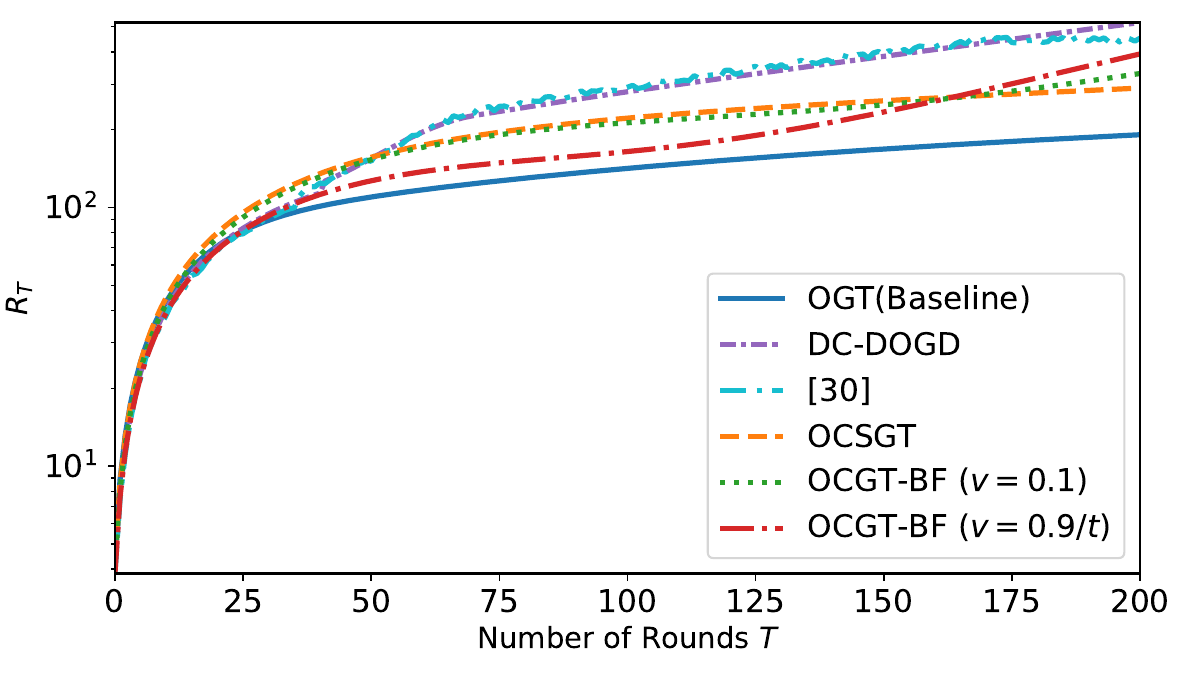}
    \caption*{(a)}
  \end{minipage}
  \hfill
  \begin{minipage}{0.85\columnwidth}
    \includegraphics[width=\linewidth]{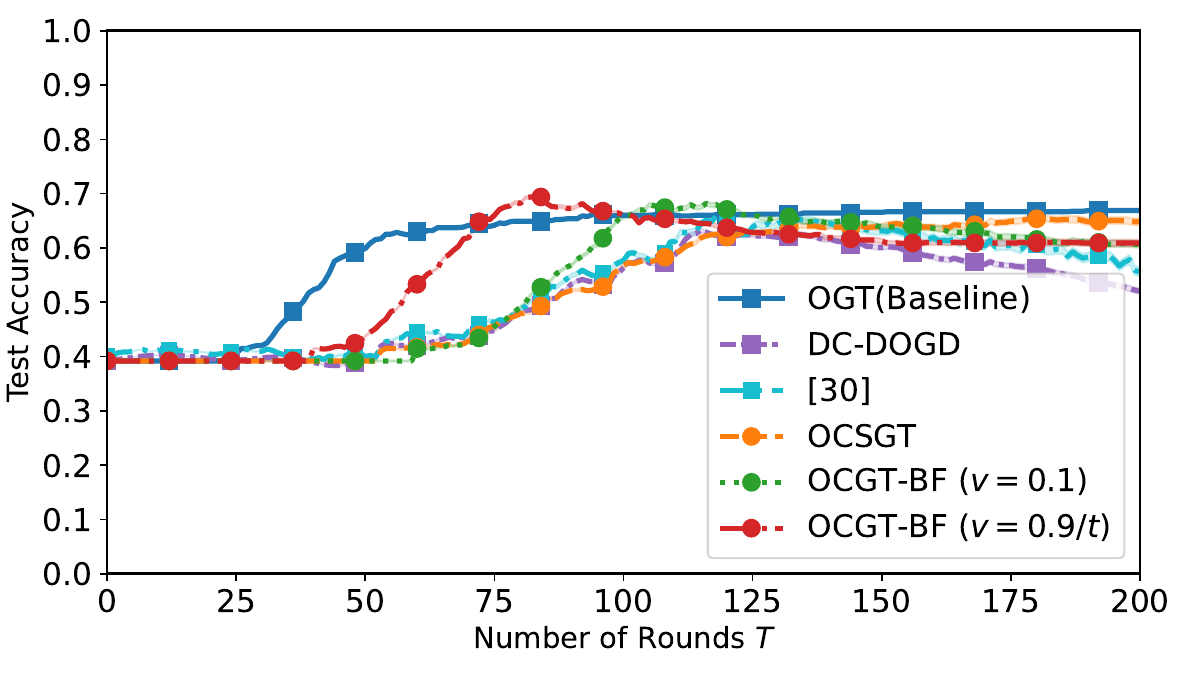}
    \caption*{(b)}
  \end{minipage}
  
\caption{Dynamic regret \(R_T\) comparisons (a) and Test accuracy trajectories (b)  among \textbf{OGT}, \textbf{DC-DOGD}, \cite{wang2024distributed}, \textbf{OCSGT}, and two variants of \textbf{OCGT-BF}, using a unified stepsize \(\eta = 0.01\), mixing parameter \(\alpha = 0.5\), and smoothing parameter \(v = 0.1\).}
    \label{fig1}
\end{figure}

We compare five algorithms: \textbf{OGT} \cite{zhang2019distributed}, \textbf{DC-DOGD}\cite{tu2022distributed} , \cite{wang2024distributed}, \textbf{OCSGT}, and two variants of \textbf{OCGT-BF}, each employing a distinct smoothing parameter $v$ within the one-point feedback mechanism. The \textbf{OGT} and \textbf{DC-DOGD} algorithm utilize local full gradient information without any compression, while \textbf{OCSGT} incorporates stochastic gradient estimates alongside a communication compression step. In contrast, \cite{wang2024distributed} and \textbf{OCGT-BF} operate solely on one-point bandit feedback, where the smoothing parameter $v$ determines the level of Gaussian perturbation. One variant adopts a fixed setting $v = 0.1$, whereas the other employs a time-decaying scheme $v = 0.9/t$. A comprehensive summary of these configurations is presented in \cref{tab:algorithm_configs}. In all implementations, the stepsize and mixing parameters are appropriately tuned to ensure stable convergence over $200$ training epochs, with model synchronization performed at each communication round according to the respective algorithmic update rules.


As shown in Fig. \ref{fig1}(a), which depicts the regret $R_T$ against the number of communication rounds \(T\), \textbf{OGT} achieves the lowest overall regret, reflecting the benefit of having direct access to uncompressed gradients. The compressed method \textbf{OCSGT} exhibits slightly higher regret, consistent with the trade-off between communication savings and gradient accuracy; \textbf{DC-DOGD} shows a regret trend similar to \textbf{OCSGT}. Both \textbf{OCGT-BF} variants also accumulate higher regret compared with \textbf{OGT}, which stems from the one-point stochastic approximation introducing additional variance. \cite{wang2024distributed}, which also relies on one-point feedback but without compression, has a regret trajectory comparable to the \textbf{OCGT-BF} variants. In addition, Fig. \ref{fig1}(b) illustrates that all methods achieve similar test accuracy patterns, with \textbf{OGT} converging marginally faster yet eventually residing in a performance range comparable to the other methods. The test accuracies of \textbf{DC-DOGD} and \cite{wang2024distributed} are also comparable to the other algorithms, indicating their effectiveness in this classification task. It is worth noting that \textbf{OCGT-BF} with the decaying smoothing parameter (i.e., $\nu=0.9/t$) demonstrates competitive accuracy while reducing information exchange through bandit-based updates.

  \begin{figure}[t]
    \centering
    \includegraphics[width=0.85\columnwidth]{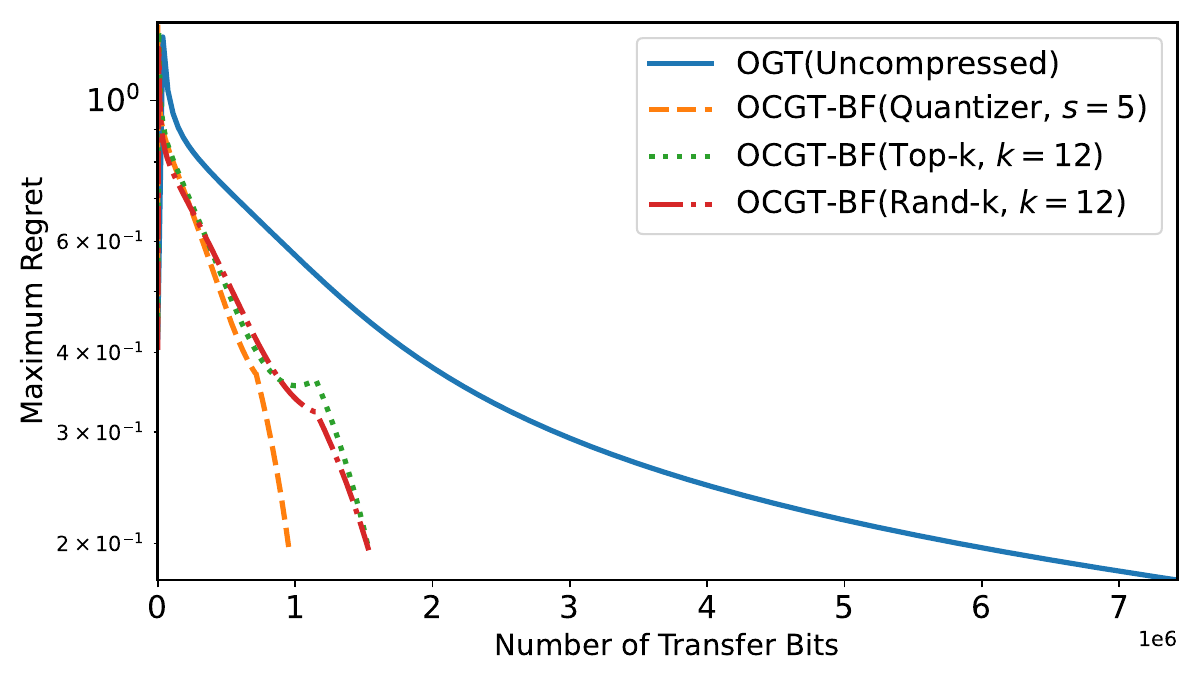}
    \caption{Comparison of the maximum regret $\max_{i \in \mathcal{V}} \mathcal{R}_i(T)/T$ versus the number of transfer bits for \textbf{OGT}~ (uncompressed) and three \textbf{OCGT-BF} variants (quantizer-based, top-k, and random-k).}
    \label{fig3}
\end{figure}

In a further experiment, we examine how each algorithm’s maximum regret (i.e., $\max _{i \in \mathcal{V}} \mathcal{R}_i(T) / T$ ) scales with respect to the total number of transmitted bits.  To this end, we augment our compressed methods with a stochastic quantizer, enabling us to finely control the bit budget per communication round.  As illustrated in Fig. \ref{fig3}, the uncompressed \textbf{OGT} requires significantly more bits to attain a similar level of regret, underscoring the communication savings afforded by compression.  Among the compressed approaches, \textbf{OCGT-BF} with a relatively coarse quantizer (e.g., $s=5$) demonstrates the steepest initial drop in maximum regret for small transfer-bit budgets, highlighting the effectiveness of stochastic quantization in high-compression regimes.  Meanwhile, both \textbf{OCGT-BF~(Top-k)} and \textbf{OCGT-BF~(Rand-k)} balance the trade-off between bit usage and regret reduction: by selecting and transmitting only a subset ($k=12$) of coordinate updates, they achieve near-competitive regret at a fraction of the communication cost.  Overall, these results reinforce that carefully designed quantization or sparsification schemes can markedly reduce bandwidth requirements while preserving favorable convergence behavior.

\section{Conclusions}\label{conclusions}
In this paper, we investigate distributed online convex optimization under communication constraints and limited gradient feedback, and propose an \textbf{OCGT} framework to address these practical challenges. By integrating a novel compression scheme with error compensation, \textbf{OCGT} substantially reduces communication overhead while maintaining robust convergence properties. Moreover, our analysis rigorously characterizes the dynamic regret of the unified framework under both bandit and stochastic gradient scenarios, demonstrating that the algorithms can effectively track evolving optimal solutions in challenging, online environments. Extensive experimental evaluations, including logistic regression simulations on real datasets, further validate the scalability and adaptability of the proposed algorithms in machine learning tasks.  Finally, it is important to note that the current work only focuses on doubly stochastic networks. Extending this work to more complex and robust network topologies and incorporating momentum-based methods for faster convergence is a fruitful future research direction.

\section*{Appendix}
\label{App-CA}
\CenteredUppercase{Some Basic Lemmas}

The following vector and matrix inequalities will be invoked in the later analysis.
\begin{lemma}\cite{liao2022compressed}
    For any integer $b \ge 1 $ and vector $\mathbf{u}_i \in \mathbb{R}^{p}$, we have
    \begin{equation}
        \|\sum_{i=1}^{b}\mathbf{u}_i\|^2 \le b\sum_{i=1}^{b}\|\mathbf{u}_i\|^2. \label{young-1}
    \end{equation} In particular, for any constant $ \zeta >1 $, one gets
    \begin{align}
        \|\sum_{i=1}^{b}\mathbf{u}_i\|^2 \leq \zeta\|\mathbf{u_1}\|^2 + \frac{(b-1) \zeta}{\zeta-1}\sum_{i=2}^{b}\|\mathbf{u}_i\|^2.\label{young-2}
    \end{align}
\label{young}
\end{lemma}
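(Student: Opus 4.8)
The plan is to treat this as two elementary steps: first establish \eqref{young-1} and then bootstrap it to obtain the weighted refinement \eqref{young-2}. For the first bound, I would invoke the Cauchy–Schwarz inequality in its scalar form. Writing $\mathbf{s} = \sum_{i=1}^b \mathbf{u}_i$, the triangle inequality gives $\|\mathbf{s}\| \le \sum_{i=1}^b \|\mathbf{u}_i\|$, and then applying Cauchy–Schwarz to the scalars $\|\mathbf{u}_i\|$ paired with the all-ones coefficient vector yields $\left(\sum_{i=1}^b \|\mathbf{u}_i\|\right)^2 \le b \sum_{i=1}^b \|\mathbf{u}_i\|^2$. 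Squaring the triangle-inequality estimate and chaining these two facts produces \eqref{young-1}. Equivalently, one may read this off directly from the convexity of $\|\cdot\|^2$ via Jensen's inequality applied to the uniform average $\frac{1}{b}\sum_{i=1}^b \mathbf{u}_i$.

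For the refinement \eqref{young-2}, I would separate the first summand from the rest and apply a parameterized Young's inequality. Setting $\mathbf{a} = \mathbf{u}_1$ and $\mathbf{b} = \sum_{i=2}^b \mathbf{u}_i$, the identity $\|\mathbf{a}+\mathbf{b}\|^2 = \|\mathbf{a}\|^2 + 2\langle \mathbf{a},\mathbf{b}\rangle + \|\mathbf{b}\|^2$ together with the cross-term estimate $2\langle \mathbf{a},\mathbf{b}\rangle \le (\zeta-1)\|\mathbf{a}\|^2 + \frac{1}{\zeta-1}\|\mathbf{b}\|^2$, valid for any $\zeta > 1$ by Cauchy–Schwarz followed by the weighted AM–GM inequality, gives $\|\mathbf{a}+\mathbf{b}\|^2 \le \zeta\|\mathbf{a}\|^2 + \frac{\zeta}{\zeta-1}\|\mathbf{b}\|^2$. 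The key design choice is to calibrate the splitting weight to $\zeta-1$ precisely so that the coefficient of $\|\mathbf{u}_1\|^2$ collapses to exactly $\zeta$.

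It remains to control $\|\mathbf{b}\|^2 = \left\|\sum_{i=2}^b \mathbf{u}_i\right\|^2$, which is where the first inequality re-enters: applying \eqref{young-1} to the $b-1$ vectors $\mathbf{u}_2,\dots,\mathbf{u}_b$ bounds it by $(b-1)\sum_{i=2}^b \|\mathbf{u}_i\|^2$. Substituting into the Young estimate and collecting terms produces the claimed coefficient $\frac{(b-1)\zeta}{\zeta-1}$ on the tail sum, completing \eqref{young-2}. There is no substantive obstacle here; the only points to watch are the constant bookkeeping, so that the Young splitting weight and the count $b-1$ in the tail combine correctly, and the degenerate cases $\mathbf{s}=\mathbf{0}$ and $b=1$, for which both inequalities hold trivially.
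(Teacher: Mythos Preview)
Your argument is correct: the Cauchy--Schwarz (or Jensen) route gives \eqref{young-1}, and the parameterized Young split $\|\mathbf{a}+\mathbf{b}\|^2 \le \zeta\|\mathbf{a}\|^2 + \tfrac{\zeta}{\zeta-1}\|\mathbf{b}\|^2$ followed by \eqref{young-1} on the tail gives \eqref{young-2} with the stated constants. Note, however, that the paper does not supply its own proof of this lemma; it is quoted from \cite{liao2022compressed} without argument, so there is no in-paper proof to compare against.
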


\begin{lemma}[\cite{qu2017harnessing}]\label{sc-lemma}
Let Assumption \ref{smooth} hold. If $0<\eta < \frac{2}{L}$, then for all $x \in \mathbb{R}^m$,
\begin{align}
    \|x-\eta \nabla f(x)-x^{*}\| \leq(1-\mu \eta)\|x-x^{*}\|,
\end{align}
where $\mu$ and $L$ are the strongly convex and Lipschitz smooth constants of the global loss function $f$, respectively.
\end{lemma}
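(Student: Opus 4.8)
The plan is to square the inequality and reduce it to a scalar estimate that exploits the optimality condition. Since $x^{*}$ is the unique minimizer of the $\mu$-strongly convex function $f$, we have $\nabla f(x^{*})=0$, so $x-\eta\nabla f(x)-x^{*}=(x-x^{*})-\eta\big(\nabla f(x)-\nabla f(x^{*})\big)$. First I would expand
\[
\|x-\eta\nabla f(x)-x^{*}\|^2=\|x-x^{*}\|^2-2\eta\langle\nabla f(x),x-x^{*}\rangle+\eta^2\|\nabla f(x)\|^2,
\]
so that it suffices to dominate the right-hand side by $(1-\mu\eta)^2\|x-x^{*}\|^2$ and take square roots.

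The key ingredient is the combined strong-convexity/smoothness (co-coercivity) estimate
\[
\langle\nabla f(x)-\nabla f(x^{*}),x-x^{*}\rangle\ge\frac{\mu L}{\mu+L}\|x-x^{*}\|^2+\frac{1}{\mu+L}\|\nabla f(x)-\nabla f(x^{*})\|^2,
\]
which I would obtain from \cref{smooth} by applying the standard co-coercivity of the convex $L$-smooth surplus $f-\tfrac{\mu}{2}\|\cdot\|^2$ and rearranging. Substituting this into the cross term of the expansion (using $\nabla f(x^{*})=0$) gives
\[
\|x-\eta\nabla f(x)-x^{*}\|^2\le\Big(1-\tfrac{2\eta\mu L}{\mu+L}\Big)\|x-x^{*}\|^2+\eta\Big(\eta-\tfrac{2}{\mu+L}\Big)\|\nabla f(x)\|^2.
\]

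To finish, I would track the sign of the gradient-norm term rather than discard it. In the operative small-step regime ($\eta\le 2/(\mu+L)$, which is compatible with $\eta<2/L$ and with the small stepsizes $\eta\le\eta^{\star}$ used by the algorithm) the coefficient $\eta\big(\eta-2/(\mu+L)\big)$ is non-positive; coupling this with the strong-monotonicity lower bound $\|\nabla f(x)\|\ge\mu\|x-x^{*}\|$ (another consequence of $\mu$-strong convexity together with $\nabla f(x^{*})=0$) and replacing $\|\nabla f(x)\|^2$ by its minimal admissible value $\mu^2\|x-x^{*}\|^2$ produces an upper bound whose coefficient collapses, via the identity $1+\eta^2\mu^2-\tfrac{2\eta\mu(\mu+L)}{\mu+L}=(1-\mu\eta)^2$, exactly to $(1-\mu\eta)^2\|x-x^{*}\|^2$; a square root then yields the claim. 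I expect the main obstacle to be establishing the combined co-coercivity estimate cleanly from the first-order data of \cref{smooth} — since $f$ is not assumed twice differentiable, the direct Hessian-eigenvalue route $\nabla f(x)-\nabla f(x^{*})=\big(\int_0^1\nabla^2 f(x^{*}+s(x-x^{*}))\,ds\big)(x-x^{*})$, with $\|I-\eta\nabla^2 f\|\le\max\{|1-\eta\mu|,|1-\eta L|\}$, is only available under the extra assumption $f\in C^2$ — together with the sign bookkeeping that pins down the admissible range of $\eta$.
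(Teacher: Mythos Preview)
The paper does not supply its own proof of this lemma; it is quoted directly from \cite{qu2017harnessing} and used as a black box, so there is nothing to compare against. Your argument is the standard one and is correct: expand the square, apply the combined strong-convexity/smoothness co-coercivity bound, and then exploit the sign of $\eta(\eta-2/(\mu+L))$ together with $\|\nabla f(x)\|\ge\mu\|x-x^{*}\|$ to collapse the coefficient to $(1-\mu\eta)^2$.

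One remark on the range: your instinct that the proof only goes through for $\eta\le 2/(\mu+L)$ is not a defect of the argument but of the stated hypothesis. The inequality $\|x-\eta\nabla f(x)-x^{*}\|\le(1-\mu\eta)\|x-x^{*}\|$ is in fact \emph{false} on the full interval $0<\eta<2/L$: already for the quadratic $f(x)=\tfrac{\mu}{2}x_1^2+\tfrac{L}{2}x_2^2$ one has contraction factor $\max\{|1-\eta\mu|,|1-\eta L|\}$, and $|1-\eta L|\le 1-\eta\mu$ holds precisely when $\eta\le 2/(\mu+L)$. The paper only ever invokes the lemma under step-size restrictions such as $\eta\le 1/L\le 2/(\mu+L)$, so the discrepancy is harmless for its purposes, and your restriction to the ``operative small-step regime'' is exactly right.
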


\label{pfth1}
\CenteredUppercase{Some necessary auxiliary results}

We first provide supporting lemmas for the proof. Updates of the decision variables and gradient tracking variables  can be written in matrix form. Let $\mathbf{W}_\eta := \mathbf{I} - \eta(\mathbf{I}-\mathbf{W})$.
\begin{align}
X_{t+1} &= \mathbf{W}_\eta X_t - \eta Y_t +\eta (\mathbf{I}-\mathbf{W})(X_t-\widetilde{X}_t), \label{update_X}\\
 Y_{t+1} &=\mathbf{W}_\eta Y_t +G_{t+1}-G_t + \eta (\mathbf{I}-\mathbf{W})(Y_t-\widetilde{Y}_t). \label{update_Y}
\end{align}
The average iterates evolve as
\begin{align}
    &\bar{x}_{t+1}=\bar{x}_t-\eta\bar{y}_t, \label{update_bar_x}\\  
    &\bar{y}_t = \frac{1}{n}\mathbf{1}^\top Y_t = \frac{1}{n}\mathbf{1}^\top G_t. \label{update_bar_y} 
\end{align}
Let $Q_t^x := \mathcal{C}(X_t - H_t^x)$ and $Q_t^y := \mathcal{C}(Y_t - H_t^y)$. Then $\widetilde{X}_t = Q_t^x + H_t^x$ and $\widetilde{Y}_t = Q_t^y + H_t^y$. The history updates can be written as
\begin{align}
    H_{t+1}^x &= (1-\alpha_x) H_t^x +\alpha_x \widetilde{X}_t, \label{update_hx}\\
    H_{t+1}^y &= (1-\alpha_y) H_t^y +\alpha_y \widetilde{Y}_t.
\end{align}
From  Assumption \ref{compress}, the compression errors satisfy
\begin{align}
    \mathbb{E}[\|X_t-\widetilde{X}_t\|^2 ]&\le \omega \mathbb{E}[\|X_t - H_t^x\|^2], \label{error_x}\\
    \mathbb{E}[\|Y_t-\widetilde{Y}_t\|^2 ]&\le \omega \mathbb{E}[\|Y_t - H_t^y\|^2]. \label{error_y}
\end{align}

\begin{lemma}\label{XX}
    Let Assumptions \ref{graph}-\ref{compress} hold. Consider $\{x_{i,t}\}_{t=1}^T$ generated by \textbf{OCGT-BF} for any agent $i\in \mathcal{V}$,
    \begin{align}
        &\mathbb{E}[\|X_{t+1}-X_t\|^2] \nonumber \\
        &\leq \left(12+12L^2\eta^2(8m+33)\right)\mathbb{E}[\|X_t-\mathbf{1}\bar{x}_t\|^2] \nonumber\\
        &\quad + 48nL^2\eta^2(m+4)\mathbb{E}[\|\bar{x}_t-x_t^*\|^2] + 12n\eta^2 v_t^2 M_L \nonumber\\
        &\quad +12\eta^2\mathbb{E}[\|Y_t-\mathbf{1}\bar{y}_t\|^2] + 12\omega \eta^2 \mathbb{E}[\|X_t-H_t^x\|^2] \nonumber\\
        &\quad + 48 n \eta^2 (m+4) \bar{\sigma}_t^2.
    \end{align}
\end{lemma}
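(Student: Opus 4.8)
The plan is to begin from the matrix form of the decision update in \cref{update_X}. Substituting $\mathbf{W}_\eta=\mathbf{I}-\eta(\mathbf{I}-\mathbf{W})$ gives the increment
\[
X_{t+1}-X_t=-\eta(\mathbf{I}-\mathbf{W})X_t-\eta Y_t+\eta(\mathbf{I}-\mathbf{W})(X_t-\widetilde{X}_t),
\]
which already separates into a gossip term acting on $X_t$, the tracking variable $Y_t$, and the compression residual $X_t-\widetilde{X}_t$. I would then invoke \cref{young} with $b=3$ (i.e.\ \cref{young-1}) to obtain
\[
\mathbb{E}\|X_{t+1}-X_t\|^2\le 3\eta^2\,\mathbb{E}\|(\mathbf{I}-\mathbf{W})X_t\|^2+3\eta^2\,\mathbb{E}\|Y_t\|^2+3\eta^2\,\mathbb{E}\|(\mathbf{I}-\mathbf{W})(X_t-\widetilde{X}_t)\|^2,
\]
and bound the three terms separately.

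The first and third terms are the routine ones. Under \cref{graph} one has $(\mathbf{I}-\mathbf{W})\mathbf{1}=0$, so $(\mathbf{I}-\mathbf{W})X_t=(\mathbf{I}-\mathbf{W})(X_t-\mathbf{1}\bar{x}_t)$, and since the eigenvalues of the symmetric doubly stochastic $\mathbf{W}$ lie in $[-1,1]$ we have $\|\mathbf{I}-\mathbf{W}\|\le 2$; hence the first term is at most $12\eta^2\,\mathbb{E}\|X_t-\mathbf{1}\bar{x}_t\|^2$. For the third term the same spectral bound together with the compression guarantee \cref{compress}, in the form \cref{error_x}, yields $12\omega\eta^2\,\mathbb{E}\|X_t-H_t^x\|^2$. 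These reproduce the $12\eta^2$ and $12\omega\eta^2$ coefficients directly.

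The crux is the tracking term $\mathbb{E}\|Y_t\|^2$. I would split $Y_t=(Y_t-\mathbf{1}\bar{y}_t)+\mathbf{1}\bar{y}_t$, so that it contributes the consensus error $\mathbb{E}\|Y_t-\mathbf{1}\bar{y}_t\|^2$ plus $\mathbb{E}\|\mathbf{1}\bar{y}_t\|^2=n\,\mathbb{E}\|\bar{y}_t\|^2$. The latter is the only place the gradient oracle enters: using $\bar{y}_t=\tfrac1n\mathbf{1}^\top G_t$ from \cref{update_bar_y} and Jensen's inequality gives $n\|\bar{y}_t\|^2\le\sum_i\|G_{i,t}\|^2$, after which the per-agent second-moment bound of the one-point estimator (\cref{bandit}, i.e.\ \cref{bandit_2}) converts each $\mathbb{E}\|G_{i,t}\|^2$ into $2(m+4)\|\nabla f_{i,t}(x_{i,t})\|^2+\tfrac{v_t^2}{2}L_i^2(m+6)^3$. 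Decomposing $\nabla f_{i,t}(x_{i,t})$ through $\nabla f_{i,t}(x_t^\star)$ exactly as in the proof of \cref{Estimated variance} and applying $L$-smoothness (\cref{smooth}, with $L=\max_i L_i$) together with the heterogeneity bound (\cref{assump:heterogeneity}) then expresses $\mathbb{E}\|\mathbf{1}\bar{y}_t\|^2$ in terms of $\mathbb{E}\|X_t-\mathbf{1}\bar{x}_t\|^2$, $\mathbb{E}\|\bar{x}_t-x_t^\star\|^2$, $v_t^2 M_L$, and $\bar{\sigma}_t^2$. Collecting all contributions and merging constants produces the stated coefficients.

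The main obstacle is this last step: controlling the energy $\mathbb{E}\|\mathbf{1}\bar{y}_t\|^2$ of the averaged bandit estimate. Unlike the gossip and compression terms, it cannot be bounded by consensus quantities alone and requires chaining the bias--variance estimate of the one-point estimator with smoothness and heterogeneity; it is also the sole source of the dimension factor $(m+4)$ and of the $v_t^2 M_L$ and $\bar{\sigma}_t^2$ terms. The residual difficulty is purely bookkeeping, namely tracking the numerical constants through the nested applications of \cref{young-1} so that the consensus, optimality-gap, smoothing, and heterogeneity coefficients land on their claimed values.
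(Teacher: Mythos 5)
Your proof is correct and follows essentially the same route as the paper's: the same three-term decomposition of $X_{t+1}-X_t$ from \eqref{update_X} split by Lemma \ref{young}, the same compression bound \eqref{error_x} giving the $12\omega\eta^2$ term, and the same conversion of $\mathbb{E}[\|Y_t\|^2]$—after centering at $\mathbf{1}\bar{y}_t$—into consensus, optimality-gap, smoothing, and heterogeneity terms. The only (harmless) deviations are that you bound $\mathbb{E}[\|G_{i,t}\|^2]$ directly via \eqref{bandit_2} and the gradient decomposition through $x_t^\star$, where the paper instead chains $\bar{y}_t \to \bar{g}_t \to \nabla f_t(\bar{x}_t)$ and invokes Lemma \ref{Estimated variance} (itself derived from \eqref{bandit_2}, so the two are interchangeable), and that your gossip-term bound $12\eta^2\,\mathbb{E}[\|X_t-\mathbf{1}\bar{x}_t\|^2]$, obtained from $\|\mathbf{W}_\eta-\mathbf{I}\|\le 2\eta$, is in fact tighter than the paper's $12\,\mathbb{E}[\|X_t-\mathbf{1}\bar{x}_t\|^2]$ and so still implies the stated coefficients.
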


\begin{proof}
    From the update rule \cref{update_X}, one has
    \begin{align*}
        &\|X_{t+1}-X_t\|^2 \\
        &= \|(\mathbf{W}_\eta -\mathbf{I})(X_t - \mathbf{1}\bar{x}_t)  -\eta Y_t +\eta(\mathbf{I}-\mathbf{W})(X_t-\widetilde{X}_t)\|^2.
    \end{align*}
    Applying \cref{young-1} and using $\|\mathbf{I}-\mathbf{W}_\eta\|\le 2$ and $\|\mathbf{I}-\mathbf{W}\| \le 2$, we obtain
    \begin{align}
        \|X_{t+1}-X_t\|^2 \leq 3\|(\mathbf{W}_\eta -\mathbf{I})(X_t - \mathbf{1}\bar{x}_t)\|^2 + 3\|\eta Y_t\|^2 \nonumber \\ + 3\|\eta(\mathbf{I}-\mathbf{W})(X_t-\widetilde{X}_t)\|^2 \nonumber \\
         \leq 12 \|X_t-\mathbf{1}\bar{x}_t\|^2 + 3\eta ^2 \|Y_t\|^2 + 12 \eta^2 \|X_t-\widetilde{X}_t\|^2. \label{XX1}
    \end{align}
    We next bound $\mathbb{E}[\|Y_t\|^2]$. Using \cref{young-1} and Assumption \ref{smooth}, it holds that 
    \begin{align}
        &\mathbb{E}[\|Y_t\|^2\mid \mathcal{F}_t] \nonumber \\
        &\leq 2\mathbb{E}[\|Y_t-\mathbf{1}\bar{y}_t\|^2] + 2\mathbb{E}[\|\mathbf{1}\bar{y}_t\|^2] \nonumber \\
        &\leq 2\mathbb{E}[\|Y_t-\mathbf{1}\bar{y}_t\|^2] + 4\mathbb{E}[\|\mathbf{1}\bar{y}_t - \mathbf{1}\bar{g}_t\|^2] + 4\mathbb{E}[\|\mathbf{1}\bar{g}_t\|^2] \nonumber \\
        &\leq 2\mathbb{E}[\|Y_t-\mathbf{1}\bar{y}_t \|^2] + 4\mathbb{E}[\|\mathbf{1}\bar{y}_t -\mathbf{1}\bar{g}_t\|^2] \nonumber \\
        &\quad + 8\mathbb{E}[\|\mathbf{1}\bar{g}_t -\mathbf{1}\nabla f_t(\bar{x}_t)\|^2] + 8\mathbb{E}[\|\mathbf{1}\nabla f_t(\bar{x}_t)\|^2] \nonumber \\
        & \leq 2\mathbb{E}[\|Y_t-\mathbf{1}\bar{y}_t \|^2] + 4\mathbb{E}[\|\mathbf{1}\bar{y}_t -\mathbf{1}\bar{g}_t\|^2] \nonumber \\
        &\quad + 8 L^2\mathbb{E}[\|X_t-\mathbf{1}\bar{x}_t \|^2] + 16 nL^2\mathbb{E}[\|\bar{x}_t - x_t^*\|^2]. \label{Le5_1}
    \end{align}
    From Assumption \ref{Estimated variance}, the gradient estimation error is bounded by
    \begin{align}
        &\mathbb{E}[\|\mathbf{1}\bar{y}_t -\mathbf{1}\bar{g}_t\|^2] = n \mathbb{E}[\|\bar{y}_t - \bar{g}_t\|^2] \nonumber \\
        &= \mathbb{E}[\|\frac{1}{n}\sum_{i=1}^n (G_{i,t}-\nabla f_{i,t}(x_{i,t}))\|^2] \nonumber \\
        &\le \frac{1}{n} \sum_{i=1}^n \mathbb{E}[\|G_{i,t}(x_{i,t},u)-\nabla f_{i,t}(x_{i,t})\|^2] \nonumber \\
        &\le v^2_t M_L + \frac{8(m+4)L^2}{n}\|X_t- \mathbf{1}\bar{x}_t\|^2 \nonumber\\
        &\quad + 8(m+4)L^2 \|\bar{x}_t-x^\star_t\|^2 + 4(m+4)\bar{\sigma}_t^2. \label{Le5_2}
    \end{align}
    Plugging \cref{Le5_2} into \cref{Le5_1} yields
    \begin{align}
        &\mathbb{E}[\|Y_t\|^2\mid \mathcal{F}_t] \nonumber \\
        &\leq 2\mathbb{E}[\|Y_t-\mathbf{1}\bar{y}_t \|^2] + 4v_t^2 M_L \nonumber \\
        &\quad +\frac{32(m+4)L^2}{n}\mathbb{E}[\|X_t- \mathbf{1}\bar{x}_t\|^2] \nonumber\\
        &\quad + (32(m+4)L^2 + 16nL^2)\mathbb{E}[\|\bar{x}_t-x^\star_t\|^2] \nonumber\\
        &\quad + 16(m+4)\bar{\sigma}_t^2. \label{XX3}
    \end{align}
    Plugging Eqs. (\ref{XX3}) and (\ref{error_x}) into \cref{XX1} completes the proof.
\end{proof}

\begin{lemma}\label{xx*}
    Let Assumptions \ref{smooth}-\ref{compress} hold. Consider the sequence $\{x_{i,t}\}$  generated by \textbf{OCGT-BF}. Then, for any agent $i\in \mathcal{V}$ and $t<T$,
    \begin{align}
        &\mathbb{E}[\|\bar{x}_{t+1} - x_{t+1}^*\|^2\mid \mathcal{F}_t] \nonumber\\
        &\le (1-\frac{2\eta \mu}{3})\mathbb{E}[\|\bar{x}_t - x_t^*\|^2] \nonumber \\
        &\quad +\frac{\eta L}{4n} \left(\frac{(4m+5)}{\sqrt{m+4}}+\frac{(4m+4)L}{\mu}\right)  \mathbb{E}[\|X_t-\mathbf{1}\bar{x}_t \|^2] \nonumber\\
        &\quad +\left(1+\frac{L\sqrt{m+4}}{36\mu}\right)\|x_{t+1}^*-x_t^*\|^2 \nonumber \\
        &\quad + \left(\frac{1}{L^2}+\frac{ \sqrt{m+4}}{\mu L} \right)(v_t^2 M_L + 4(m+4)\bar{\sigma}_t^2).  
    \end{align}
    \end{lemma}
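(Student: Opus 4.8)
The plan is to track the optimality gap of the averaged iterate directly through the averaged recursion \eqref{update_bar_x}, namely $\bar{x}_{t+1} = \bar{x}_t - \eta\bar{y}_t$, bearing in mind that by \eqref{update_bar_y} the driving term $\bar{y}_t = \frac{1}{n}\mathbf{1}^\top G_t$ is the average of the one-point estimators rather than a true gradient. First I would peel off the drift of the minimizer by writing $\bar{x}_{t+1} - x_{t+1}^\star = (\bar{x}_{t+1} - x_t^\star) + (x_t^\star - x_{t+1}^\star)$ and applying the weighted inequality \eqref{young-2}, so that the interaction between the iterate and the drift $\mathbf{v}_t = \|x_{t+1}^\star - x_t^\star\|$ is governed by a single tunable parameter; this is the source of the factor $(1 + \tfrac{L\sqrt{m+4}}{36\mu})$ multiplying $\|x_{t+1}^\star - x_t^\star\|^2$.

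Next I would analyze $\mathbb{E}[\|\bar{x}_{t+1} - x_t^\star\|^2\mid\mathcal{F}_t]$ by inserting the nominal full-gradient step, writing $\bar{x}_{t+1} - x_t^\star = \bigl(\bar{x}_t - \eta\nabla f_t(\bar{x}_t) - x_t^\star\bigr) - \eta\bigl(\bar{y}_t - \nabla f_t(\bar{x}_t)\bigr)$. Lemma \ref{sc-lemma} contracts the first bracket to $(1-\mu\eta)\|\bar{x}_t - x_t^\star\|$, supplying the base contraction, and I would split the residual $\bar{y}_t - \nabla f_t(\bar{x}_t)$ into three pieces: the zero-mean estimator fluctuation $\bar{y}_t - \bar{g}_t^{v_t}$, where $\bar{g}_t^{v_t} := \frac{1}{n}\sum_i \nabla f_{i,t}^{v_t}(x_{i,t})$; the Gaussian-smoothing bias $\bar{g}_t^{v_t} - \bar{g}_t$, bounded via \eqref{bandit_1}; and the consensus gap $\bar{g}_t - \nabla f_t(\bar{x}_t)$, bounded by $\tfrac{L}{\sqrt{n}}\|X_t - \mathbf{1}\bar{x}_t\|$. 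Expanding the square and taking $\mathbb{E}[\cdot\mid\mathcal{F}_t]$ kills the cross term against the zero-mean fluctuation, so only the deterministic bias and consensus pieces enter linearly through the cross term, each handled by Young's inequality; the fluctuation enters only through its second moment, which Lemma \ref{Estimated variance} bounds by $v_t^2 M_L$, the consensus term, the heterogeneity term $4(m+4)\bar{\sigma}_t^2$, and a term proportional to $8(m+4)L^2\|\bar{x}_t - x_t^\star\|^2$.

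Collecting everything, I would then select the several Young parameters — in particular one proportional to $1/\sqrt{m+4}$ — so that, first, the dimension factors $(m+4)$ carried by the bandit noise are traded down to the $\sqrt{m+4}$ factors appearing in the stated coefficients $\tfrac{\eta L}{4n}\bigl(\tfrac{4m+5}{\sqrt{m+4}} + \tfrac{(4m+4)L}{\mu}\bigr)$ and $\bigl(\tfrac{1}{L^2} + \tfrac{\sqrt{m+4}}{\mu L}\bigr)$, and second, the $O(\eta)$ contributions to the $\|\bar{x}_t - x_t^\star\|^2$ coefficient consolidate with the base $(1-\mu\eta)^2$ into exactly $1 - \tfrac{2\eta\mu}{3}$, with the leftover $O(\eta^2)$ pieces absorbed under the restriction $\eta\le\eta^\star$.

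The main obstacle is precisely this last consolidation. The estimator second moment reinjects the optimality gap $\|\bar{x}_t - x_t^\star\|^2$ with a coefficient of order $mL^2$, which a naive triangle-inequality-then-square treatment would multiply by an $O(\eta/\mu)$ Young factor and thereby overwhelm the $(1-\mu\eta)$ contraction. The resolution I would rely on is to keep the conditional expectation explicit throughout, so that this genuinely stochastic contribution is attached to an $\eta^2/n$ prefactor rather than an $\eta$ one, while the deterministic bias and consensus parts carry no $\|\bar{x}_t - x_t^\star\|^2$ whatsoever; only then is there enough slack between $(1-\mu\eta)$ and $(1-\tfrac{2\eta\mu}{3})$ to absorb what survives. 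Checking that a single admissible choice of the Young parameters simultaneously realizes all four stated coefficients — the contraction, the consensus coefficient, the drift coefficient, and the noise coefficient — is the most delicate bookkeeping, and I would verify it by treating the $\sqrt{m+4}$-balanced parameter and the contraction-tightening parameter separately and confirming their admissible ranges overlap for all $\eta\in(0,\eta^\star]$.
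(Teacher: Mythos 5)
Your proposal is correct and shares the paper's skeleton: the paper likewise works from $\bar{x}_{t+1}=\bar{x}_t-\eta\bar{y}_t$, inserts the nominal step $\bar{x}_t-\eta\nabla f_t(\bar{x}_t)-x_t^\star$, contracts it via Lemma~\ref{sc-lemma}, feeds the estimator error into Lemma~\ref{Estimated variance}, and obtains the stated coefficients by choosing the Young parameter $\tau=1+\frac{\mu}{L\sqrt{m+4}}$ (your ``$\sqrt{m+4}$-balanced'' parameter) together with $\eta\le\min\{\frac{1}{L},\frac{1}{3\mu},\frac{1}{12L\sqrt{m+4}}\}$. Where you genuinely diverge is the treatment of the residual: the paper splits $\bar{y}_t-\nabla f_t(\bar{x}_t)$ into only two pieces, the consensus gap $\nabla f_t(\bar{x}_t)-\bar{g}_t$ and the total estimator error $\bar{g}_t-\bar{y}_t$ (bias and fluctuation lumped together), and applies \eqref{young-2} with $b=4$ to all four terms at once, drift included, so that every piece --- the stochastic one as well --- carries the factor $\frac{3\tau}{\tau-1}\eta^2\approx\frac{3L\sqrt{m+4}}{\mu}\eta^2$; no conditional-unbiasedness or cross-term cancellation appears anywhere in the paper's argument, and $\mathbb{E}[\|\bar{g}_t-\bar{y}_t\|^2]$ is bounded by the per-agent average $\frac{1}{n}\sum_i$ via Jensen, without the extra $1/n$ you would extract from independence of the $u_{i,t}$. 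Your three-way split (zero-mean fluctuation / smoothing bias / consensus) with the martingale cross-term kill is therefore a refinement the paper does not use: it attaches a bare $\eta^2$ (indeed $\eta^2/n$) prefactor to the fluctuation's second moment, which makes precisely the step you flag as the crux --- absorbing the reinjected $8(m+4)L^2\|\bar{x}_t-x_t^\star\|^2$ term from Lemma~\ref{Estimated variance} into the slack between $\tau(1-\eta\mu)^2$ and $1-\frac{2\eta\mu}{3}$ --- strictly easier than in the paper's own bookkeeping, where that term carries the inflated $\frac{3\tau}{\tau-1}\eta^2$ weight and is dispatched only with the remark that the stepsize condition ``completes the proof.'' The trade-off is symmetric: the paper's route is shorter, while yours requires the filtration bookkeeping (that $x_{i,t}$ is $\mathcal{F}_t$-measurable while $u_{i,t}$ is fresh and independent across agents) and the observation that the fluctuation's variance about its conditional mean is dominated by the second moment Lemma~\ref{Estimated variance} actually bounds; in exchange you gain constants tighter by a factor of order $L\sqrt{m+4}/\mu$ on the noise terms, which still certifies the lemma's inequality as stated.
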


\begin{proof}
    From \cref{update_bar_x}, one have
    \begin{align}
        &\|\bar{x}_{t+1} - x_{t+1}^*\|^2 =\|\bar{x}_t -\eta \bar{y}_t- x_{t+1}^*\|^2\nonumber\\
        &=\|\bar{x}_t-\eta \nabla f_t(\bar{x}_t)-x_t^*  +\eta (\nabla f_t(\bar{x}_t)- \bar{g}_t) \nonumber \\
        &\quad +\eta (\bar{g}_t-\bar{y}_t) +x_t^* -x_{t+1}^*\|^2. \nonumber
    \end{align}
    Applying \cref{young-2} with $\zeta = \tau > 1$ yields
    \begin{align}
        &\|\bar{x}_{t+1} - x_{t+1}^*\|^2 \nonumber \\
        &\le \tau \|\bar{x}_t-\eta \nabla f_t(\bar{x}_t)-x_t^*\|^2 \nonumber \\
        &\quad + \frac{3\tau}{\tau -1}\Big(\eta^2\|\nabla f_t(\bar{x}_t)-\bar{g}_t\|^2 + \eta^2\|\bar{g}_t-\bar{y}_t\|^2 \nonumber \\
        &\qquad +\|x_t^* -x_{t+1}^*\|^2 \Big).\label{xx1}
    \end{align}
    Taking the conditional expectation $\mathbb{E}[\cdot \mid \mathcal{F}_t]$ and applying Lemmas \ref{Estimated variance}, \ref{sc-lemma}, along with Assumption \ref{smooth},  we obtain
    \begin{align*}
        &\mathbb{E}[\|\bar{x}_{t+1} - x_{t+1}^*\|^2\mid \mathcal{F}_t] \nonumber \\
        &\le \tau (1-\eta \mu)^2\mathbb{E}[\|\bar{x}_t - x_t^*\|^2] \nonumber \\
        &\quad +\frac{3\tau}{\tau -1}\Big(\|x_{t+1}^*-x_t^*\|^2 + \frac{L^2\eta^2}{n}\mathbb{E}[\|X_t-\mathbf{1}\bar{x}_t\|^2] \\
        &\qquad + \eta^2 \mathbb{E}[\|\bar{g}_t-\bar{y}_t\|^2] \Big) \\
        &\le \tau (1-\eta \mu)^2\mathbb{E}[\|\bar{x}_t - x_t^*\|^2] \nonumber \\
        &\quad +\frac{3\tau}{\tau -1}\Big(\|x_{t+1}^*-x_t^*\|^2 + \frac{L^2\eta^2}{n}\|X_t- \mathbf{1}\bar{x}_t\|^2 \nonumber\\
        &\qquad + \eta^2 (v^2_t M_L + \frac{8(m+4)L^2}{n}\|X_t- \mathbf{1}\bar{x}_t\|^2 \\
        &\qquad + 8(m+4)L^2\|\bar{x}_t-x^\star_t\|^2 + 4(m+4)\bar{\sigma}_t^2) \Big).
   \end{align*}
   Choosing $\tau = 1+\frac{\mu}{L\sqrt{m+4}}$ and using the condition $\eta \le \min\{\frac{1}{L},\frac{1}{3\mu},\frac{1}{12L\sqrt{m+4}}\}$ completes the proof.
\end{proof}

\begin{lemma}\label{Xbarx}
    Let Assumptions\ref{graph}-\ref{compress} hold. Consider the sequence $\{x_{i,t}\}_{t=1}^T$ generated by \textbf{OCGT-BF} for any agent $i\in \mathcal{V}$,
    \begin{align}
        &\mathbb{E}[\|X_{t+1}-\mathbf{1}\bar{x}_{t+1}\|^2\mid \mathcal{F}_t] \nonumber \\
        &\quad \le \frac{w_1}{2}\mathbb{E}[\|X_t-\mathbf{1}\bar{x}_t \|^2] +\frac{2w_1\eta^2}{w_2}\mathbb{E}[\|Y_t-\mathbf{1}\bar{y}_t \|^2] \nonumber \\
        &\quad +\frac{8\eta^2 \omega w_1}{w_2}\mathbb{E}[\|X_t-H_t^x\|^2],
     \end{align}
        where $w_1 = 1+ \rho_\eta^2$, $w_2 = 1- \rho_\eta^2$, and $\rho_\eta = \|\mathbf{W}_\eta - \frac{1}{n}\mathbf{1}\mathbf{1}^\top\|$.
\end{lemma}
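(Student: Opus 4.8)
The plan is to exploit the averaging projection $J := \frac{1}{n}\mathbf{1}\mathbf{1}^\top$, for which $X_{t+1} - \mathbf{1}\bar{x}_{t+1} = (\mathbf{I}-J)X_{t+1}$ since $\mathbf{1}\bar{x}_t = J X_t$ and $\mathbf{1}\bar{y}_t = J Y_t$. First I would left-multiply the update \cref{update_X} by $\mathbf{I}-J$ and simplify using the double stochasticity of $\mathbf{W}$ from Assumption \ref{graph}. Because $\mathbf{W}_\eta \mathbf{1} = \mathbf{1}$ and $\mathbf{1}^\top \mathbf{W}_\eta = \mathbf{1}^\top$, one has $J\mathbf{W}_\eta = \mathbf{W}_\eta J = J$, whence $(\mathbf{I}-J)\mathbf{W}_\eta X_t = (\mathbf{W}_\eta - J)X_t = (\mathbf{W}_\eta - J)(X_t - \mathbf{1}\bar{x}_t)$, the last equality holding since $(\mathbf{W}_\eta - J)\mathbf{1}\bar{x}_t = 0$. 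Likewise $J(\mathbf{I}-\mathbf{W}) = 0$ gives $(\mathbf{I}-J)(\mathbf{I}-\mathbf{W}) = \mathbf{I}-\mathbf{W}$, and $(\mathbf{I}-J)Y_t = Y_t - \mathbf{1}\bar{y}_t$. This yields the clean three-term decomposition
\begin{align*}
X_{t+1} - \mathbf{1}\bar{x}_{t+1} &= (\mathbf{W}_\eta - J)(X_t - \mathbf{1}\bar{x}_t) - \eta(Y_t - \mathbf{1}\bar{y}_t) \\
&\quad + \eta(\mathbf{I}-\mathbf{W})(X_t - \widetilde{X}_t).
\end{align*}

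Next I would square and apply the weighted Young inequality \cref{young-2} with $b=3$, treating the consensus term as the privileged first summand. The crucial choice is $\zeta = \frac{1+\rho_\eta^2}{2\rho_\eta^2}$: since $\|(\mathbf{W}_\eta - J)(X_t - \mathbf{1}\bar{x}_t)\| \le \rho_\eta\|X_t - \mathbf{1}\bar{x}_t\|$ by the definition $\rho_\eta = \|\mathbf{W}_\eta - J\|$, the leading coefficient becomes $\zeta\rho_\eta^2 = \frac{1+\rho_\eta^2}{2} = \frac{w_1}{2}$, exactly the claimed contraction factor. With the same $\zeta$ the cross coefficient evaluates to $\frac{(b-1)\zeta}{\zeta - 1} = \frac{2\zeta}{\zeta-1} = \frac{2w_1}{w_2}$, using $\zeta - 1 = \frac{w_2}{2\rho_\eta^2}$.

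It then remains to bound the two remaining summands. The tracking term contributes $\frac{2w_1}{w_2}\eta^2\|Y_t - \mathbf{1}\bar{y}_t\|^2$ directly. For the compression term I would use $\|\mathbf{I}-\mathbf{W}\| \le 2$ (a consequence of Assumption \ref{graph}, as already invoked in \cref{XX}) to get $\|\eta(\mathbf{I}-\mathbf{W})(X_t - \widetilde{X}_t)\|^2 \le 4\eta^2\|X_t - \widetilde{X}_t\|^2$, so its coefficient is $\frac{2w_1}{w_2}\cdot 4\eta^2 = \frac{8w_1\eta^2}{w_2}$. Finally, taking $\mathbb{E}[\cdot\mid\mathcal{F}_t]$, under which the first two summands are measurable, and invoking the compression-error bound \cref{error_x} together with Assumption \ref{compress}, namely $\mathbb{E}[\|X_t - \widetilde{X}_t\|^2] \le \omega\,\mathbb{E}[\|X_t - H_t^x\|^2]$, converts the last term into $\frac{8\eta^2\omega w_1}{w_2}\mathbb{E}[\|X_t - H_t^x\|^2]$, giving the stated inequality.

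I expect the main obstacle to be the balancing of the Young parameter: the projection algebra and the spectral/norm estimates are routine, but reproducing the prescribed coefficients $\frac{w_1}{2}$, $\frac{2w_1\eta^2}{w_2}$, and $\frac{8\eta^2\omega w_1}{w_2}$ hinges entirely on the single correct choice $\zeta = \frac{1+\rho_\eta^2}{2\rho_\eta^2}$, which is precisely what forces the leading coefficient below one whenever $\rho_\eta < 1$ and thereby makes the recursion a genuine contraction.
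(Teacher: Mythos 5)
Your proof is correct and takes essentially the same route as the paper: the identical three-term decomposition of $X_{t+1}-\mathbf{1}\bar{x}_{t+1}$ (using double stochasticity so that $(\mathbf{W}_\eta-\tfrac{1}{n}\mathbf{1}\mathbf{1}^\top)$ acts on the consensus error and the average update is unaffected by the compression correction), followed by \cref{young-2} with the same choice $\zeta=\tau=\frac{w_1}{2\rho_\eta^2}$, the bound $\|\mathbf{I}-\mathbf{W}\|\le 2$, and the compression-error bound \cref{error_x}. A minor point in your favor: applying \cref{young-2} with $b=3$ directly reproduces the stated coefficients $\frac{2w_1\eta^2}{w_2}$ and $\frac{8\omega w_1\eta^2}{w_2}$ exactly, whereas the paper's written proof groups the last two summands and its displayed intermediate coefficients $\frac{4\tau\eta^2}{\tau-1}$ and $\frac{16\tau\eta^2}{\tau-1}$ carry an extra factor of two relative to the lemma statement.
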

\begin{proof}
    Recalling Eqs. (\ref{update_X}) and (\ref{update_bar_x}), we have
    \begin{align*}
        &X_{t+1}-\mathbf{1}\bar{x}_{t+1} \\
        &= (\mathbf{W}_\eta X_t - \eta Y_t +\eta(\mathbf{I}-\mathbf{W})(X_t-\widetilde{X}_t)) - \mathbf{1}(\bar{x}_t-\eta\bar{y}_t)\\
        &= (\mathbf{W}_\eta - \frac{1}{n}\mathbf{1}\mathbf{1}^\top)(X_t - \mathbf{1}\bar{x}_t) - \eta (Y_t - \mathbf{1}\bar{y}_t) \\
        &\quad + \eta(\mathbf{I}-\mathbf{W})(X_t-\widetilde{X}_t).
    \end{align*}
    Taking the squared norm and conditional expectation $\mathbb{E}[\cdot \mid \mathcal{F}_t]$, and applying \cref{young-2} with $\zeta = \tau > 1$, it holds that
    \begin{align*}
        &\mathbb{E}[\|X_{t+1}-\mathbf{1}\bar{x}_{t+1}\|^2 \mid \mathcal{F}_t] \\
        & \le \tau \mathbb{E}[\|(\mathbf{W}_\eta - \frac{1}{n}\mathbf{1}\mathbf{1}^\top)(X_t - \mathbf{1}\bar{x}_t)\|^2] \\
        &\quad + \frac{2\tau}{\tau-1} \mathbb{E}[\|-\eta (Y_t - \mathbf{1}\bar{y}_t) + \eta(\mathbf{I}-\mathbf{W})(X_t-\widetilde{X}_t)\|^2] \\
        & \le \tau \rho_\eta^2 \mathbb{E}[\|X_t-\mathbf{1}\bar{x}_t \|^2] + \frac{4\tau \eta^2}{\tau-1}\mathbb{E}[\|Y_t-\mathbf{1}\bar{y}_t \|^2] \\
        &\quad +\frac{4\tau \eta^2}{\tau-1}\mathbb{E}[\|(\mathbf{I}-\mathbf{W})(X_t-\widetilde{X}_t)\|^2] \\
        & \le \tau \rho_\eta^2 \mathbb{E}[\|X_t-\mathbf{1}\bar{x}_t \|^2]  + \frac{4\tau \eta^2}{\tau-1}\mathbb{E}[\|Y_t-\mathbf{1}\bar{y}_t \|^2] \\
        &\quad +\frac{16\tau \eta^2}{\tau-1}\mathbb{E}[\|X_t-\widetilde{X}_t\|^2].
    \end{align*}
    Choosing $\tau=\frac{w_1}{2\rho_\eta^2}$ and using \cref{error_x} complete the proof.
\end{proof}

\begin{lemma}\label{GG}
    Let Assumptions \ref{graph}-\ref{compress} hold. For the sequences $\{x_{i,t}\}$ from \textbf{OCGT-BF}, if $\eta \le \frac{1}{12L\sqrt{m+4}}$, then for any $t<T$,
    \begin{align}
        &\mathbb{E}[\|G_{t+1}-G_t\|^2] \nonumber \\
        &\le (15\left(m+4\right)L^2+1)\mathbb{E}[\|X_t-\mathbf{1}\bar{x}_t\|^2] \nonumber\\
        &\quad +10nL^2(m+4)\mathbb{E}[\|\bar{x}_t-x_t^\star\|^2] \nonumber\\
        &\quad +\left(48+\frac{8(m+4)w_1}{w_2}\right)L^2\eta^2  \mathbb{E} [\|Y_t-\mathbf{1}\bar{y}_t\|^2] \nonumber\\
        &\quad +4\mathbb{E}[\|\nabla F_{t+1}(X_t)-\nabla F_t(X_t)\|^2] \nonumber\\
        &\quad +\frac{30\mu n+L\sqrt{m+4}}{3\mu}(v_t^2 M_L + 4(m+4)\bar{\sigma}_t^2)\nonumber\\
        &\quad +\left(\frac{32w_1(m+4)}{w_2}+48\right)\eta^2 \omega L^2 \mathbb{E}[\|X_t-H_t^x\|^2] \nonumber\\
        &\quad +4(m+4)L^2n\left(1+\frac{L\sqrt{m+4}}{36\mu}\right)  \|x_{t+1}^\star - x_t^\star\|^2.
     \end{align}
    \end{lemma}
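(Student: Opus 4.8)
The plan is to control the gradient increment $G_{t+1}-G_t$ by first isolating the purely random fluctuation of the one-point estimator at time $t+1$ from its conditional mean, and then expanding the mean through a telescoping decomposition. Let $\phi_{t+1}:=\mathbb{E}[G_{t+1}\mid\mathcal{F}_{t+1}]=\nabla F_{t+1}^{v_{t+1}}(X_{t+1})$ denote the smoothed gradient, i.e.\ the conditional expectation of $G_{t+1}$ given the history through the draw of $X_{t+1}$. Since $G_t$ is $\mathcal{F}_{t+1}$-measurable and $\mathbb{E}[G_{t+1}-\phi_{t+1}\mid\mathcal{F}_{t+1}]=0$, the cross term vanishes and
\begin{align}
\mathbb{E}[\|G_{t+1}-G_t\|^2]=\mathbb{E}[\|G_{t+1}-\phi_{t+1}\|^2]+\mathbb{E}[\|\phi_{t+1}-G_t\|^2]. \nonumber
\end{align}
Keeping the variance term with coefficient one, rather than absorbing it into a Young split, is essential: this is what ultimately produces the tight $(m+4)$ prefactors appearing in the statement.

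For the variance term I would use $\mathbb{E}[\|G_{t+1}-\phi_{t+1}\|^2]\le\sum_i\mathbb{E}[\|G_{i,t+1}\|^2]$ and invoke the second-moment estimate \cref{bandit_2}. The resulting $\sum_i\|\nabla f_{i,t+1}(x_{i,t+1})\|^2$ is then split by $L$-smoothness together with the \emph{orthogonal} (Pythagorean) identity $\|X_{t+1}-\mathbf{1}x_{t+1}^\star\|^2=\|X_{t+1}-\mathbf{1}\bar{x}_{t+1}\|^2+n\|\bar{x}_{t+1}-x_{t+1}^\star\|^2$, which avoids the lossy factor-two splitting used inside \cref{Estimated variance} and yields the clean coefficients $4(m+4)L^2$ on the consensus error and $4n(m+4)L^2$ on the optimality gap at $t+1$, together with a heterogeneity term $4n(m+4)\bar{\sigma}_{t+1}^2$ and a $v^2 M_L$ contribution. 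These time-$(t+1)$ quantities are then pushed back one step: $\|X_{t+1}-\mathbf{1}\bar{x}_{t+1}\|^2$ through \cref{Xbarx} (generating the $\tfrac{8(m+4)w_1}{w_2}L^2\eta^2\|Y_t-\mathbf{1}\bar{y}_t\|^2$ and $\tfrac{32w_1(m+4)}{w_2}\eta^2\omega L^2\|X_t-H_t^x\|^2$ contributions), and $\|\bar{x}_{t+1}-x_{t+1}^\star\|^2$ through \cref{xx*} (generating the optimal-drift coefficient $4(m+4)L^2 n(1+\tfrac{L\sqrt{m+4}}{36\mu})$ and the $\tfrac{L\sqrt{m+4}}{3\mu}$ part of the factor on the estimation term).

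For the mean term I would telescope,
\begin{align}
\phi_{t+1}-G_t &= \bigl(\phi_{t+1}-\nabla F_{t+1}(X_{t+1})\bigr)+\bigl(\nabla F_{t+1}(X_{t+1})-\nabla F_{t+1}(X_t)\bigr) \nonumber\\
&\quad +\bigl(\nabla F_{t+1}(X_t)-\nabla F_t(X_t)\bigr)+\bigl(\nabla F_t(X_t)-G_t\bigr), \nonumber
\end{align}
and apply \cref{young-1} with $b=4$. The first bracket is the smoothing bias, controlled by \cref{bandit_1}; the second is handled by $L$-smoothness, $\|\nabla F_{t+1}(X_{t+1})-\nabla F_{t+1}(X_t)\|^2\le L^2\|X_{t+1}-X_t\|^2$, followed by the one-step drift bound of \cref{XX} (this is the source of the bare $48$ coefficients on $\|Y_t-\mathbf{1}\bar{y}_t\|^2$ and $\|X_t-H_t^x\|^2$); the third bracket is exactly the retained function-variation term $4\|\nabla F_{t+1}(X_t)-\nabla F_t(X_t)\|^2$; and the last is the time-$t$ estimation error, bounded by \cref{Estimated variance}. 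I would then collect every contribution onto $\|X_t-\mathbf{1}\bar{x}_t\|^2$, $\|\bar{x}_t-x_t^\star\|^2$, $\|Y_t-\mathbf{1}\bar{y}_t\|^2$, $\|X_t-H_t^x\|^2$, and the driving quantities, using the stepsize restriction $\eta\le\frac{1}{12L\sqrt{m+4}}$ to absorb the higher-order $\mathcal{O}(L^4\eta^2(m+4))$ remainders into the stated coefficients.

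The main obstacle is the bookkeeping that keeps the dimension dependence tight. Because $X_{t+1}$, and hence $\phi_{t+1}$, depends on the earlier perturbation $u_t$ through the tracking variable, one cannot naively perform a second variance decomposition at time $t$; the $t+1$ fluctuation must be isolated first under $\mathcal{F}_{t+1}$, and only afterward may the mean part be expanded. Equally delicate is the choice between the orthogonal split used for the $t+1$ variance and the Young/\cref{Estimated variance} route used for the $t$ error: mixing these routes incorrectly inflates the $(m+4)$ prefactors, so the contributions to $\|\bar{x}_t-x_t^\star\|^2$ and to the estimation term $v_t^2 M_L+4(m+4)\bar{\sigma}_t^2$ must be assembled from the two sources in tandem in order to land precisely on the constants claimed in the statement.
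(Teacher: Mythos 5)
Your proposal is correct and follows the same skeleton as the paper's proof---telescoping $G_{t+1}-G_t$ through $\nabla F_{t+1}(X_{t+1})$, $\nabla F_{t+1}(X_t)$ and $\nabla F_t(X_t)$, bounding $\mathbb{E}[\|X_{t+1}-X_t\|^2]$ via \cref{XX}, the time-$t$ estimation error via \cref{Estimated variance}, and pushing the time-$(t+1)$ consensus and optimality quantities back through \cref{Xbarx} and \cref{xx*}---but with one genuine refinement. The paper applies \cref{young-1} with $b=4$ to all four blocks, including the fresh estimator error $G_{t+1}-\nabla F_{t+1}(X_{t+1})$, which it bounds wholesale by \cref{Estimated variance}; you instead extract the time-$(t+1)$ fluctuation orthogonally through the conditional-expectation identity with $\phi_{t+1}:=\mathbb{E}[G_{t+1}\mid\mathcal{F}_{t+1}]$ (valid, since $G_t$ and $X_{t+1}$ are indeed $\mathcal{F}_{t+1}$-measurable under the paper's filtration convention), so the fresh variance enters with coefficient $1$ rather than $4$, and only the smoothing bias $\phi_{t+1}-\nabla F_{t+1}(X_{t+1})$, controlled by \cref{bandit_1}, passes through the Young split. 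Together with your Pythagorean split $\|X_{t+1}-\mathbf{1}x_{t+1}^\star\|^2=\|X_{t+1}-\mathbf{1}\bar{x}_{t+1}\|^2+n\|\bar{x}_{t+1}-x_{t+1}^\star\|^2$ this buys constant-factor tightness, and it also makes explicit the back-substitution step that actually generates the statement's $w_1/w_2$ terms and the $\|x_{t+1}^\star-x_t^\star\|^2$ drift term---a step the paper's written proof compresses into ``substituting the bound from \cref{XX} completes the proof,'' even though \cref{XX} alone cannot produce those terms. Two caveats, both shared with the paper rather than unique to you: the driving quantities $v_{t+1}^2 M_L$ and $\bar{\sigma}_{t+1}^2$ arising from the time-$(t+1)$ variance are not transported backward by \cref{Xbarx} or \cref{xx*}, so strictly the bound should retain them at index $t+1$ unless one assumes monotonicity of $v_t$ and $\bar{\sigma}_t$ (the paper silently writes them at index $t$); and neither your accounting nor the paper's displayed intermediate inequality reproduces the stated numerical prefactors exactly (e.g., $10nL^2(m+4)$ on $\mathbb{E}[\|\bar{x}_t-x_t^\star\|^2]$, versus the larger constants both derivations yield), although every structural term in the lemma is matched by your genealogy.
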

\begin{proof}
    Using \cref{young-1}, we have
    \begin{align}
        &\|G_{t+1}-G_t\|^2 \nonumber \\
        &=\|G_{t+1}-\nabla F_{t+1}(X_{t+1})+\nabla F_{t+1}(X_{t+1})-\nabla F_{t+1}(X_t) \nonumber\\
        &\quad +\nabla F_{t+1}(X_t)-\nabla F_t(X_t) + \nabla F_t(X_t) - G_t\|^2 \nonumber\\
        &\le 
        4\|\nabla F_{t+1}(X_t)-\nabla F_t(X_t)\|^2 + 4\|G_t - \nabla F_t(X_t)\|^2 \nonumber \\
        &\quad +4\|G_{t+1}-\nabla F_{t+1}(X_{t+1})\|^2  \nonumber\\
        & \quad + 4\|\nabla F_{t+1}(X_{t+1})-\nabla F_{t+1}(X_t)\|^2.
        \label{GG1}
    \end{align}
   Taking the expectation of \cref{GG1}, using Assumption \ref{smooth} along with Lemma \ref{Estimated variance}, we obtain
    \begin{align*}
        &\mathbb{E}[\|G_{t+1}-G_t\|^2] \\
        &\le 4L^2\mathbb{E}[\|X_{t+1}-X_t\|^2] + 4\mathbb{E}[\|\nabla F_{t+1}(X_t)-\nabla F_t(X_t)\|^2] \\
        &\quad + 4\mathbb{E}[\|G_{t+1}-\nabla F_{t+1}(X_{t+1})\|^2] + 4\mathbb{E}[\|G_t-\nabla F_t(X_t)\|^2] \\
        &\le 4L^2\mathbb{E}[\|X_{t+1}-X_t\|^2] + 4\mathbb{E}[\|\nabla F_{t+1}(X_t)-\nabla F_t(X_t)\|^2] \\
        &\quad + 8 n (v^2_t M_L + \frac{8(m+4) L^2}{n} \|X_t- \mathbf{1}\bar{x}_t\|^2 \\
        &\quad + 8(m+4) L^2 \|\bar{x}_t - x_t^\star\|^2 + 4(m+4)\bar{\sigma}_t^2).
    \end{align*}
    Substituting the bound for $\mathbb{E}[\|X_{t+1}-X_t\|^2]$ from \cref{XX} completes the proof.
\end{proof}

\begin{lemma}\label{lem:Ydiff_bound}
    Let Assumptions \ref{graph}-\ref{compress} hold. For the sequence $\{x_{i,t}\}$ from \textbf{OCGT-BF}, if $\eta \le \frac{1}{12L\sqrt{m+4}}$, then for any $t<T$,
    \begin{align}
        &\mathbb{E}[\|Y_{t+1}-\mathbf{1}\bar{y}_{t+1}\|^2\mid \mathcal{F}_t] \nonumber \\
        &\le \left(\frac{w_1}{2}+\left(\frac{96w_1w_2L^2+16(m+4)w_1^2}{w_2^2}\right)\eta^2\right) \nonumber \\
        &\qquad \times \mathbb{E}[\|Y_t-\mathbf{1}\bar{y}_t \|^2]\nonumber \\
        &\quad +\frac{30(m+4)L^2w_1+2w_1}{w_2} \mathbb{E}[\|X_t-\mathbf{1}\bar{x}_t \|^2] \nonumber \\
        &\quad + \frac{20nL^2(m+4)w_1}{w_2}\mathbb{E}[\|\bar{x}_t - x_t^*\|^2]\nonumber \\
        &\quad + \frac{64w_1^2(m+4)+96w_1w_2}{w_2^2}L^2\eta^2\omega  \mathbb{E}[\|X_t-H_t^x\|^2] \nonumber \\
        &\quad + \frac{8w_1\eta^2\omega}{w_2}\mathbb{E}[\|Y_t-H_t^y\|^2] \nonumber \\
        &\quad + \frac{8w_1}{w_2}\mathbb{E}[\|\nabla F_{t+1}(X_t)-\nabla F_t(X_t)\|^2] \nonumber\\
        &\quad +\frac{60\mu nw_1+2Lw_1\sqrt{m+4}}{3\mu w_2}(v_t^2 M_L + 4(m+4)\bar{\sigma}_t^2)\nonumber\\ 
        &\quad +\frac{8L^2(m+4)nw_1\left(1+\frac{L\sqrt{m+4}}{36\mu}\right)}{w_2}  \|x_{t+1}^\star - x_t^\star\|^2.
    \end{align}
\end{lemma}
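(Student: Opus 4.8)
The plan is to derive a one-step recursion for the gradient-tracking consensus error by projecting the $Y$-update \cref{update_Y} onto the disagreement subspace, and then to absorb the resulting gradient-difference term using Lemma \ref{GG}. First I would exploit the gradient-tracking invariance: from \cref{update_bar_y} we have $\mathbf{1}^\top Y_{t+1}=\mathbf{1}^\top G_{t+1}$, so $\bar{y}_{t+1}=\frac{1}{n}\mathbf{1}^\top G_{t+1}$ and $Y_{t+1}-\mathbf{1}\bar{y}_{t+1}=(\mathbf{I}-\frac{1}{n}\mathbf{1}\mathbf{1}^\top)Y_{t+1}$. Applying the averaging projection $(\mathbf{I}-\frac{1}{n}\mathbf{1}\mathbf{1}^\top)$ to \cref{update_Y} and using $\mathbf{W}_\eta\mathbf{1}=\mathbf{1}$, $\frac{1}{n}\mathbf{1}\mathbf{1}^\top\mathbf{W}=\frac{1}{n}\mathbf{1}\mathbf{1}^\top$, and $(\mathbf{I}-\mathbf{W})\mathbf{1}=0$ (so that the projection leaves the $(\mathbf{I}-\mathbf{W})$ factor unchanged and kills the $\mathbf{1}\bar{y}_t$ part of the mixing term) yields
$$Y_{t+1}-\mathbf{1}\bar{y}_{t+1} = \bigl(\mathbf{W}_\eta - \tfrac{1}{n}\mathbf{1}\mathbf{1}^\top\bigr)(Y_t-\mathbf{1}\bar{y}_t) + \bigl(\mathbf{I}-\tfrac{1}{n}\mathbf{1}\mathbf{1}^\top\bigr)(G_{t+1}-G_t) + \eta(\mathbf{I}-\mathbf{W})(Y_t-\widetilde{Y}_t).$$

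Next I would take the squared norm and apply \cref{young-2} with three summands and parameter $\zeta=\tau>1$, bounding operator norms by $\|\mathbf{W}_\eta-\frac{1}{n}\mathbf{1}\mathbf{1}^\top\|=\rho_\eta$, $\|\mathbf{I}-\frac{1}{n}\mathbf{1}\mathbf{1}^\top\|\le 1$, and $\|\mathbf{I}-\mathbf{W}\|\le 2$. This produces a leading term $\tau\rho_\eta^2\|Y_t-\mathbf{1}\bar{y}_t\|^2$ and a remainder $\frac{2\tau}{\tau-1}\bigl(\|G_{t+1}-G_t\|^2+4\eta^2\|Y_t-\widetilde{Y}_t\|^2\bigr)$. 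Mirroring the choice in Lemma \ref{Xbarx}, I would set $\tau=\frac{w_1}{2\rho_\eta^2}$, so that the contraction factor becomes exactly $\tau\rho_\eta^2=\frac{w_1}{2}$ and the remainder prefactor becomes $\frac{2\tau}{\tau-1}=\frac{2w_1}{w_2}$. Taking $\mathbb{E}[\cdot\mid\mathcal{F}_t]$ and invoking the compression error bound \cref{error_y} then converts the $\|Y_t-\widetilde{Y}_t\|^2$ contribution into $\frac{8w_1\eta^2\omega}{w_2}\mathbb{E}[\|Y_t-H_t^y\|^2]$, matching the corresponding displayed coefficient.

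The final and most laborious step is substituting Lemma \ref{GG} for $\mathbb{E}[\|G_{t+1}-G_t\|^2]$ and re-collecting terms: every coefficient in Lemma \ref{GG} is multiplied by $\frac{2w_1}{w_2}$, and the $\frac{w_1}{2}$ contraction factor is added to the self-coefficient of $\|Y_t-\mathbf{1}\bar{y}_t\|^2$. This is where the couplings surface, since the gradient difference simultaneously carries the smoothness contribution (through $\|X_{t+1}-X_t\|^2$, already expanded via Lemma \ref{XX}), the objective drift $\|\nabla F_{t+1}(X_t)-\nabla F_t(X_t)\|^2$, the optimizer shift $\|x_{t+1}^\star-x_t^\star\|^2$, and the bandit bias/variance budget $v_t^2 M_L+4(m+4)\bar{\sigma}_t^2$ from Lemma \ref{Estimated variance}. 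The main obstacle is purely the bookkeeping: in particular, tracking the two distinct powers of $w_1/w_2$ that appear in the $\eta^2\|Y_t-\mathbf{1}\bar{y}_t\|^2$ self-coefficient (one from the $48L^2\eta^2$ route and one from the $\frac{8(m+4)w_1}{w_2}L^2\eta^2$ route in Lemma \ref{GG}) and in the $\eta^2\omega\|X_t-H_t^x\|^2$ coefficient. The stepsize restriction $\eta\le\frac{1}{12L\sqrt{m+4}}$ is inherited directly from Lemma \ref{GG} and is exactly what keeps these $\eta^2$-inflated self-coefficients from destroying the geometric contraction; the projection structure and the contraction estimate themselves are routine.
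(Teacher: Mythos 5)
Your proposal is correct and follows essentially the same route as the paper's proof: the identical disagreement-subspace decomposition of \cref{update_Y} (which the paper states directly but your projection argument justifies), the same application of \cref{young-2} with $\tau=\frac{w_1}{2\rho_\eta^2}$ yielding the contraction factor $\frac{w_1}{2}$ and remainder prefactor $\frac{2\tau}{\tau-1}=\frac{2w_1}{w_2}$, the same use of \cref{error_y} for the compression term, and the same final substitution of Lemma \ref{GG}. Your coefficient bookkeeping checks out against the stated bound, including the two routes contributing to the $\eta^2\,\mathbb{E}[\|Y_t-\mathbf{1}\bar{y}_t\|^2]$ self-coefficient, and the stepsize condition is indeed inherited solely from Lemma \ref{GG}.
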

\begin{proof}
    Using Eqs. (\ref{update_Y}) and (\ref{update_bar_y}), one has
    \begin{align*}
      &Y_{t+1}-\mathbf{1}\bar{y}_{t+1} \\
      &=(\mathbf{W}_\eta Y_t+G_{t+1}-G_t + \eta(\mathbf{I}-\mathbf{W})(Y_t-\widetilde{Y}_t)) - \mathbf{1}\bar{y}_{t+1}\\
      &=(\mathbf{W}_\eta - \frac{1}{n}\mathbf{1}\mathbf{1}^\top) (Y_t - \mathbf{1}\bar{y}_t) + (I-\frac{1}{n}\mathbf{1}\mathbf{1}^\top)(G_{t+1}-G_t) \\
      &\quad + \eta(\mathbf{I}-\mathbf{W})(Y_t-\widetilde{Y}_t).
    \end{align*}
    Taking the squared norm and conditional expectation $\mathbb{E}[\cdot \mid \mathcal{F}_t]$, applying \cref{young-2} with $\zeta=\tau>1$, using $\|\mathbf{W}_\eta - \frac{1}{n}\mathbf{1}\mathbf{1}^\top\|=\rho_\eta$, $\|I-\frac{1}{n}\mathbf{1}\mathbf{1}^\top\| \le 1$ and $\|\mathbf{I}-\mathbf{W}\| \le 2$, it holds that
    \begin{align*}
        &\mathbb{E}[\|Y_{t+1}-\mathbf{1}\bar{y}_{t+1}\|^2\mid \mathcal{F}_t] \\
        &\le \tau\rho_\eta^2 \mathbb{E}[\|Y_t-\mathbf{1}\bar{y}_t \|^2] + \frac{2\tau}{\tau-1}\mathbb{E}[\|G_{t+1}-G_t\|^2] \\
        &\quad +\frac{2\tau}{\tau-1}\mathbb{E}[\|\eta(\mathbf{I}-\mathbf{W})(Y_t-\widetilde{Y}_t)\|^2] \\
        &\le \tau\rho_\eta^2 \mathbb{E}[\|Y_t-\mathbf{1}\bar{y}_t \|^2] + \frac{2\tau}{\tau-1}\mathbb{E}[\|G_{t+1}-G_t\|^2] \\
        &\quad +\frac{8\tau\eta^2}{\tau-1}\mathbb{E}[\|Y_t-\widetilde{Y}_t\|^2].
    \end{align*}
    Choosing $\tau = \frac{w_1}{2\rho_\eta^2}$ and using \cref{error_y} yields
    \begin{align*}
        &\mathbb{E}[\|Y_{t+1}-\mathbf{1}\bar{y}_{t+1}\|^2\mid \mathcal{F}_t] \\
        &\quad \le \frac{w_1}{2}\mathbb{E}[\|Y_t-\mathbf{1}\bar{y}_t \|^2] + \frac{2w_1}{w_2}\mathbb{E}[\|G_{t+1}-G_t\|^2] \\
        &\qquad + \frac{8w_1\eta^2 \omega}{w_2}\mathbb{E}[\|Y_t-H_t^y\|^2].
    \end{align*}
    Plugging the bound for $\mathbb{E}[\|G_{t+1}-G_t\|^2]$ from \cref{GG} completes the proof.
\end{proof}

\begin{lemma}\label{lem:XdiffH_bound}
    Let Assumptions \ref{smooth}-\ref{compress} hold. Consider the sequence $\{x_{i,t}\}_{t=1}^T$ generated by \textbf{OCGT-BF} for any agent $i\in \mathcal{V}$. If $\eta \le \frac{1}{12L\sqrt{m+4}}$, then for any $t<T$,
        \begin{align}
            &\mathbb{E}[\|X_{t+1}-H_{t+1}^x\|^2\mid \mathcal{F}_t] \nonumber \\
            &\quad \le (b_x+a_x\omega\eta^2)\mathbb{E}[\|X_t-H_t^x\|^2]  +2a_x\mathbb{E}[\|X_t-\mathbf{1}\bar{x}_t \|^2]\nonumber\\
            &\quad +n a_x \eta^2 \mathbb{E}[\|\bar{x}_t - x_t^*\|^2] +a_x \eta^2\mathbb{E}[\|Y_t-\mathbf{1}\bar{y}_t \|^2] \nonumber \\ 
            &\quad + a_x \eta^2 (v_t^2 M_L + 4(m+4)\bar{\sigma}_t^2),
            \label{XH1}
        \end{align}
    where $b_x = \tau_x[1-\alpha_x(1-\omega)]$, $a_x = \frac{12\tau_x}{\tau_x-1}(\frac{1}{12}+L^2(8m+33))$, and $\tau_x>1$. 
\end{lemma}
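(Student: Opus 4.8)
The plan is to treat $\Omega_{t+1}^x=\mathbb{E}[\|X_{t+1}-H_{t+1}^x\|^2]$ as an error-feedback state and split it into a contractive part governed by the history-mixing parameter $\alpha_x$ and a disturbance part given by the per-step increment $\|X_{t+1}-X_t\|^2$, which is already controlled by \cref{XX}. First I would substitute the history update \cref{update_hx}, $H_{t+1}^x=(1-\alpha_x)H_t^x+\alpha_x\widetilde{X}_t$, and write $X_t=(1-\alpha_x)X_t+\alpha_x X_t$ to obtain the exact identity
\[
X_{t+1}-H_{t+1}^x = (X_{t+1}-X_t) + \underbrace{(1-\alpha_x)(X_t-H_t^x)+\alpha_x(X_t-\widetilde{X}_t)}_{=:A}.
\]
Here $A$ is a convex combination of the current error $X_t-H_t^x$ and the compression residual $X_t-\widetilde{X}_t$ with weights $1-\alpha_x$ and $\alpha_x$.

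Next I would apply \cref{young-2} with two blocks and parameter $\zeta=\tau_x>1$, assigning the contractive block the leading coefficient, giving $\|X_{t+1}-H_{t+1}^x\|^2\le \tau_x\|A\|^2+\frac{\tau_x}{\tau_x-1}\|X_{t+1}-X_t\|^2$. The crucial step is to bound $\|A\|^2$ by \emph{convexity} of $\|\cdot\|^2$ (Jensen), not by a further Young split: since the weights sum to one, $\|A\|^2\le (1-\alpha_x)\|X_t-H_t^x\|^2+\alpha_x\|X_t-\widetilde{X}_t\|^2$. Taking $\mathbb{E}[\cdot\mid\mathcal{F}_t]$ and invoking the compression bound \cref{error_x}, $\mathbb{E}[\|X_t-\widetilde{X}_t\|^2]\le\omega\,\mathbb{E}[\|X_t-H_t^x\|^2]$, collapses this to $(1-\alpha_x+\alpha_x\omega)\mathbb{E}[\|X_t-H_t^x\|^2]$, so the leading term is exactly $b_x\mathbb{E}[\|X_t-H_t^x\|^2]$ with $b_x=\tau_x[1-\alpha_x(1-\omega)]$, matching the statement.

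It then remains to plug the increment estimate of \cref{XX} into $\frac{\tau_x}{\tau_x-1}\mathbb{E}[\|X_{t+1}-X_t\|^2]$ and collect terms. Each of the five quantities in \cref{XX} ($\|X_t-\mathbf{1}\bar{x}_t\|^2$, $\|\bar{x}_t-x_t^*\|^2$, $\|Y_t-\mathbf{1}\bar{y}_t\|^2$, $\|X_t-H_t^x\|^2$, and the noise/heterogeneity term $v_t^2 M_L+4(m+4)\bar{\sigma}_t^2$) acquires the prefactor $\frac{\tau_x}{\tau_x-1}$; I would then dominate all the resulting constants uniformly by the single coefficient $a_x=\frac{12\tau_x}{\tau_x-1}(\frac{1}{12}+L^2(8m+33))$, which is engineered so that $12\cdot\frac{1}{12}=1$ absorbs the unit constant in \cref{XX} while $12\,L^2(8m+33)$ dominates the smoothness-weighted constants. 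The stepsize restriction $\eta\le\frac{1}{12L\sqrt{m+4}}$ is used precisely to keep the $\eta^2$-weighted coefficients (such as $48nL^2\eta^2(m+4)$) below their $a_x$-scaled targets. Finally, merging the two $\|X_t-H_t^x\|^2$ contributions — the $b_x$ term from $A$ and the $\frac{\tau_x}{\tau_x-1}\cdot 12\omega\eta^2$ term from \cref{XX} — yields the coefficient $b_x+a_x\omega\eta^2$, completing the inequality.

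The step I expect to be the main obstacle is not conceptual but the uniform constant bookkeeping: one must verify that every constant emerging from \cref{XX} after the $\frac{\tau_x}{\tau_x-1}$ scaling is genuinely dominated by its $a_x$-multiple, which leans on the stepsize bound and on the regime where $L^2(8m+33)$ is not negligible, and one must ensure the noise and drift terms carry the correct $n$ and $(m+4)$ prefactors. The conceptual crux, by contrast, is the decomposition and the convexity bound on $\|A\|^2$, since replacing Jensen by a lossy Young split there would produce the factor $(1-\alpha_x)^2$ rather than $1-\alpha_x(1-\omega)$ and thereby destroy the intended contraction structure needed later for $\rho(G(\eta))<1$.
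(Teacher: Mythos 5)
Your proposal follows the paper's proof essentially step for step: the same decomposition $X_{t+1}-H_{t+1}^x=(X_{t+1}-X_t)+(1-\alpha_x)(X_t-H_t^x)+\alpha_x(X_t-H_t^x-Q_t^x)$, the same application of \cref{young-2} with $\zeta=\tau_x$ placing the large factor $\tfrac{\tau_x}{\tau_x-1}$ on the increment, the same convexity (Jensen) bound on the convex combination followed by \cref{compress}/\cref{error_x} to obtain $b_x=\tau_x[1-\alpha_x(1-\omega)]$, and the same substitution of \cref{XX} with the constants absorbed into $a_x$ under $\eta\le\frac{1}{12L\sqrt{m+4}}$. Your closing observations---that Jensen rather than a lossy Young split is what preserves the contraction factor, and that the only delicate part is the uniform domination of the \cref{XX} constants by $a_x$---accurately identify both the crux and the one place where the paper itself is terse ("simplifying ... completes the proof"), so the proposal is correct and matches the paper's route.
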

\begin{proof}
    Recalling \cref{update_hx} and $H_{t+1}^x = H_t^x+\alpha_x Q_t^x$. Thus,
    \begin{align}
        &\|X_{t+1}-H_{t+1}^x\|^2 =\|X_{t+1}-X_t+X_t-H_t^x-\alpha_xQ_t^x\|^2 \nonumber\\
        &= \|X_{t+1}-X_t + (1-\alpha_x)(X_t-H_t^x) \nonumber \\
        &\qquad + \alpha_x(X_t-H_t^x-Q_t^x) \|^2. \nonumber
    \end{align}
    Applying \cref{young-2} with $\zeta=\tau_x > 1$, one gets
    \begin{align*}
         &\mathbb{E}[\|X_{t+1}-H_{t+1}^x\|^2\mid \mathcal{F}_t] \\
         &\le \tau_x \mathbb{E}[\|(1-\alpha_x)(X_t-H_t^x) + \alpha_x(X_t-H_t^x-Q_t^x) \|^2] \\
         &\quad + \frac{\tau_x}{\tau_x-1}\mathbb{E}[\|X_{t+1}-X_t\|^2].
    \end{align*}
    Using convexity of the squared norm and Assumption \ref{compress} yields
    \begin{align*}
         &\mathbb{E}[\|X_{t+1}-H_{t+1}^x\|^2\mid \mathcal{F}_t] \\
         &\le \tau_x ((1-\alpha_x)\mathbb{E}[\|X_t-H_t^x\|^2] + \alpha_x\mathbb{E}[\|X_t-H_t^x-Q_t^x\|^2]) \\
         &\quad + \frac{\tau_x}{\tau_x-1}\mathbb{E}[\|X_{t+1}-X_t\|^2] \\
         &\le \tau_x ( 1 - \alpha_x(1-\omega) ) \mathbb{E}[\|X_t-H_t^x\|^2] \\
         &\quad + \frac{\tau_x}{\tau_x-1}\mathbb{E}[\|X_{t+1}-X_t\|^2].
    \end{align*}
    Substituting the bound for $\mathbb{E}[\|X_{t+1}-X_t\|^2]$ from \cref{XX} and simplifying under the condition $\eta \le \frac{1}{12L\sqrt{m+4}}$ completes the proof with the appropriately defined $a_x$ and $b_x$.
\end{proof}

\begin{lemma}\label{lem:YdiffH_bound2}
    Let Assumptions \ref{smooth}-\ref{compress} hold. For the sequences $\{x_{i,t}\}$ from \textbf{OCGT-BF}, if $\eta \le \frac{1}{12L\sqrt{m+4}}$, then for any $t<T$,
    \begin{align}
        &\mathbb{E}[\|Y_{t+1}-H_{t+1}^y\|^2 \mid \mathcal{F}_t] \nonumber \\
        &\le (b_y + a_y \omega \eta^2)\mathbb{E}[\|Y_t-H_t^y\|^2] \nonumber\\ 
        &\quad + a_y \mathbb{E}[\|Y_t-\mathbf{1}\bar{y}_t \|^2] \nonumber \\ 
        &\quad + a_y \frac{15(m+4)L^2+1}{3} \mathbb{E}[\|X_t-\mathbf{1}\bar{x}_t \|^2] \nonumber\\ 
        &\quad + a_y \frac{10nL^2(m+4)}{3} \mathbb{E}[\|\bar{x}_t - x_t^*\|^2] \nonumber \\ 
        &\quad + a_y (\frac{32w_1(m+4)}{w_2}+48) \frac{\omega \eta^2 L^2}{3} \mathbb{E}[\|X_t-H_t^x\|^2] \nonumber \\ 
        &\quad + a_y \frac{30\mu n+L\sqrt{m+4}}{9\mu} (v_t^2 M_L + 4(m+4)\bar{\sigma}_t^2) \nonumber \\ 
        &\quad + \frac{4a_y}{3} \mathbb{E}[\|\nabla F_{t+1}(X_t)-\nabla F_t(X_t)\|^2] \nonumber \\ 
        &\quad + a_y \frac{4(m+4)L^2n}{3}\left(1+\frac{L\sqrt{m+4}}{36\mu}\right) \|x_{t+1}^\star - x_t^\star\|^2, 
    \end{align}
    where $b_y = \tau_y[1-\alpha_y(1-\omega)]$ and $a_y = \frac{3\tau_y}{\tau_y-1}$ and $\tau_y>1$. 
\end{lemma}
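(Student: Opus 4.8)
The plan is to follow the template of Lemma \ref{lem:XdiffH_bound}, but applied to the gradient-tracking variable $Y$, with the key additional ingredient being the increment bound from Lemma \ref{GG}. First I would use the history recursion: since $H_{t+1}^y = (1-\alpha_y)H_t^y + \alpha_y\widetilde{Y}_t = H_t^y + \alpha_y Q_t^y$ with $Q_t^y = \mathcal{C}(Y_t-H_t^y)$, I write
\[
Y_{t+1}-H_{t+1}^y = (Y_{t+1}-Y_t) + (1-\alpha_y)(Y_t-H_t^y) + \alpha_y(Y_t-H_t^y-Q_t^y),
\]
then apply \cref{young-2} with $\zeta=\tau_y>1$, treating the two history terms as the ``main'' summand and $Y_{t+1}-Y_t$ as the residual. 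Using convexity of $\|\cdot\|^2$ with respect to the weights $(1-\alpha_y,\alpha_y)$ together with the compression estimate $\mathbb{E}[\|Y_t-H_t^y-Q_t^y\|^2]\le\omega\,\mathbb{E}[\|Y_t-H_t^y\|^2]$ from Assumption \ref{compress} produces the contraction coefficient $b_y=\tau_y(1-\alpha_y(1-\omega))$ on $\mathbb{E}[\|Y_t-H_t^y\|^2]$, leaving the residual $\tfrac{\tau_y}{\tau_y-1}\mathbb{E}[\|Y_{t+1}-Y_t\|^2]$.

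Next I would bound $\mathbb{E}[\|Y_{t+1}-Y_t\|^2]$ from the matrix update \cref{update_Y}. Using $\mathbf{W}_\eta-\mathbf{I}=-\eta(\mathbf{I}-\mathbf{W})$ and $(\mathbf{I}-\mathbf{W})\mathbf{1}=0$, I express $Y_{t+1}-Y_t = (G_{t+1}-G_t) - \eta(\mathbf{I}-\mathbf{W})(Y_t-\mathbf{1}\bar{y}_t) + \eta(\mathbf{I}-\mathbf{W})(Y_t-\widetilde{Y}_t)$, and apply the inequalities of \cref{young} together with $\|\mathbf{I}-\mathbf{W}\|\le 2$ and the compression error \cref{error_y}. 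This isolates $\mathbb{E}[\|G_{t+1}-G_t\|^2]$ plus two $O(\eta^2)$ contributions in $\mathbb{E}[\|Y_t-\mathbf{1}\bar{y}_t\|^2]$ and $\mathbb{E}[\|Y_t-H_t^y\|^2]$. The crucial step is then to substitute the bound on $\mathbb{E}[\|G_{t+1}-G_t\|^2]$ from Lemma \ref{GG}, which expands the increment into the $X$-consensus error $\mathbb{E}[\|X_t-\mathbf{1}\bar{x}_t\|^2]$, the optimality gap $\mathbb{E}[\|\bar{x}_t-x_t^\star\|^2]$, the $X$-compression error $\mathbb{E}[\|X_t-H_t^x\|^2]$, the functional-variation term $\mathbb{E}[\|\nabla F_{t+1}(X_t)-\nabla F_t(X_t)\|^2]$, the bandit-noise/heterogeneity term $v_t^2 M_L+4(m+4)\bar{\sigma}_t^2$, and the optimal drift $\|x_{t+1}^\star-x_t^\star\|^2$.

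Finally I would collect all contributions, absorb the $\eta^2$ factors under $\eta\le\tfrac{1}{12L\sqrt{m+4}}$, and set $a_y=\tfrac{3\tau_y}{\tau_y-1}$ so that each term inherited from Lemma \ref{GG} carries the factor $\tfrac{\tau_y}{\tau_y-1}=a_y/3$, reproducing the stated coefficients (note the uniform $1/3$ appearing on the $X$-consensus, optimality-gap, drift, noise, and gradient-variation terms). I expect the coefficient bookkeeping to be the main obstacle: because $Y$ is coupled to the entire $X$-subsystem and to the compression dynamics through $G_{t+1}-G_t$, all eight terms must be propagated through two nested applications of Young's inequality while keeping the $\eta$- and $(w_1,w_2,\omega)$-dependence explicit. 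The delicate point is coordinating the split used in the $\|Y_{t+1}-Y_t\|^2$ bound with the choice of $\tau_y$ so that the gradient-tracking increment is not inflated relative to $\tfrac{\tau_y}{\tau_y-1}$, ensuring the constants $b_y$ and $a_y$ emerge exactly as stated.
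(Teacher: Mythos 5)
Your proposal matches the paper's proof essentially step for step: the same Young-inequality split via \cref{young-2} with $\zeta=\tau_y$ isolating the contraction $b_y=\tau_y[1-\alpha_y(1-\omega)]$ on $\mathbb{E}[\|Y_t-H_t^y\|^2]$ from the residual $\tfrac{\tau_y}{\tau_y-1}\mathbb{E}[\|Y_{t+1}-Y_t\|^2]$, the same three-term bound on $\|Y_{t+1}-Y_t\|^2$ from \cref{update_Y} using $\|\mathbf{I}-\mathbf{W}\|\le 2$ and \cref{error_y}, and the same substitution of \cref{GG} with $a_y=\tfrac{3\tau_y}{\tau_y-1}$ to produce the uniform $a_y/3$ prefactors. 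Your rewriting of $\mathbf{W}_\eta-\mathbf{I}$ as $-\eta(\mathbf{I}-\mathbf{W})$ before bounding is only a cosmetic variant (in fact slightly tighter than the paper's direct use of $\|\mathbf{W}_\eta-\mathbf{I}\|\le 2$) and does not change the argument.
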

\begin{proof}
    Similar to the proof of \cref{lem:XdiffH_bound}, according to $H_{t+1}^y = H_t^y+\alpha_y Q_t^y$ and  \cref{young-2} with $\zeta=\tau_y > 1$, we obtain
    \begin{align*}
        &\mathbb{E}[\|Y_{t+1}-H_{t+1}^y\|^2 \mid \mathcal{F}_t] \\
        &\quad \le \tau_y [1-\alpha_y(1-\omega)]\mathbb{E}[\|Y_t - H_t^y\|^2] \\
        &\qquad +\frac{\tau_y}{\tau_y-1}\mathbb{E}[\|Y_{t+1}-Y_t\|^2 \mid \mathcal{F}_t].
    \end{align*}
    From \cref{update_Y}, it can be derived that 
    \begin{align*}
        Y_{t+1}-Y_t &=(\mathbf{W}_\eta-\mathbf{I})(Y_t-\mathbf{1}\bar{y}_t ) + (G_{t+1}-G_t) \\
        &\quad + \eta(\mathbf{I}-\mathbf{W})(Y_t-\widetilde{Y}_t).
    \end{align*}
    Taking squared norm and expectation, using \cref{young-1}, $\|\mathbf{W}_\eta-\mathbf{I}\|\le 2$, $\|\mathbf{I}-\mathbf{W}\|\le 2$, and \cref{error_y} yields
    \begin{align*}
        &\mathbb{E}[\|Y_{t+1}-Y_t\|^2] \\
        &\le 3 \mathbb{E}[\|(\mathbf{W}_\eta-\mathbf{I})(Y_t-\mathbf{1}\bar{y}_t )\|^2 ] + 3 \mathbb{E}[\|G_{t+1}-G_t\|^2 ] \\
        &\quad + 3 \mathbb{E}[\|\eta(\mathbf{I}-\mathbf{W})(Y_t-\widetilde{Y}_t)\|^2 ] \\
        &\le 12 \mathbb{E}[\|Y_t-\mathbf{1}\bar{y}_t \|^2] + 3 \mathbb{E}[\|G_{t+1}-G_t\|^2] \\
        &\quad + 12 \eta^2 \mathbb{E}[\|Y_t-\widetilde{Y}_t\|^2] \\
        &\le 12 \mathbb{E}[\|Y_t-\mathbf{1}\bar{y}_t \|^2]+3\mathbb{E}[\|G_{t+1}-G_t\|^2] \\
        &\quad +12\eta^2 \omega \mathbb{E}[\|Y_t-H_t^y\|^2].
    \end{align*}
    Combining these inequalities and substituting the bound for $\mathbb{E}[\|G_{t+1}-G_t\|^2]$ from \cref{GG} leads to the result with the defined $a_y$ and $b_y$.
\end{proof}

\CenteredUppercase{Proof of Lemma \ref{lem:system_ineq}}
\begin{proof}
According to Lemmas \ref{XX}-\ref{lem:YdiffH_bound2}, the matrix $G(\eta) \in \mathbb{R}^{5 \times 5}$ can be constructed as:
\[ 
\resizebox{\columnwidth}{!}{$
G(\eta):= \begin{bmatrix}
    \frac{w_1}{2} & \frac{2w_1\eta^2}{w_2} & 0 & \frac{8\omega w_1\eta^2}{w_2} & 0 \\
    c_1 & \frac{w_1}{2}+c_2 \eta^2 & c_3 & c_4 \eta^2 & \frac{8w_1\omega\eta^2}{w_2} \\
    c_5\eta & 0 & 1-\frac{2\mu\eta}{3} & 0 & 0 \\ 
    2a_x & a_x\eta^2 & na_x & b_x+a_x \omega\eta^2 & 0 \\ 
    c_6 & c_7 & c_8 & c_7 \omega & b_y+a_y\omega\eta
\end{bmatrix}
$}
\]
where the coefficient definitions are 
\begin{align*}
    c_1&:=\frac{30(m+4) L^2 w_1+2 w_1}{w_2}, \\
    c_2 &:= \frac{96 w_1 w_2 L^2+16(m+4) w_1^2}{w_2^2},\\
    c_3 &:= \frac{20 n L^2(m+4) w_1}{w_2}, \\
    c_4&:=\frac{64 w_1^2(m+4)+96 w_1 w_2}{w_2^2} L^2 \omega, \\
    c_5 &:= \frac{L}{4 n}\left(\frac{(4 m+5)}{\sqrt{m+4}}+\frac{(4 m+4) L}{\mu}\right), \\
    c_6&:= \frac{15(m+4) L^2+1}{6} a_y, ~
    c_7 := \frac{(w_1+w_2) a_y}{w_2}, \\
    c_8 &:= \frac{5 n L^2(m+4) a_y}{3}, ~ a_x^\prime := \frac{n a_x }{L^2},~
        a_1 := \frac{8w_1n}{w_2}, \\
        a_2 &:= \frac{8 L^2(m+4) n w_1\left(1+\frac{L \sqrt{m+4}}{36 \mu}\right)}{w_2},\\
        a_3 &:= \frac{60 \mu n w_1+2 L w_1 \sqrt{m+4}}{3 \mu w_2}, \\ 
        a_4 &:= 1+\frac{L \sqrt{m+4}}{36 \mu},~ 
        a_5 := \frac{1}{L^2}+\frac{ \sqrt{m+4}}{\mu L}, \\ 
        a_6 &:= (m+4) L^2 n a_y\left(1+\frac{L \sqrt{m+4}}{36 \mu}\right),\\
        a_7 &:= \frac{(30 \mu n+L \sqrt{m+4}) a_y}{18 \mu}. 
    \end{align*} 

We aim to proof that $0 < \rho(G(\eta)) < 1$ for appropriate stepsize $\eta > 0$. To establish this, it is sufficient to verify that all diagonal entries satisfy $0 < G_{ii}(\eta) < 1$, and that the determinant $\det(I - G(\eta))$ is positive.

\textbf{Step 1: Bounding Diagonal Elements.}

The conditions imposed on each diagonal entry are examined, i.e.,
\begin{align*}
G_{11} &= \frac{w_1}{2} < 1 \quad (\text{since } w_1 = 1+\tilde{\rho}^2 < 2), \\
G_{22} &= \frac{w_1}{2} + c_2 \eta^2 < 1 \quad \Rightarrow \quad
\eta < \sqrt{\frac{w_2}{2c_2}} := \eta_1, \\
G_{33} &= 1 - \frac{3\mu\eta}{2} \in (0,1) \quad \Rightarrow \quad
\eta < \frac{2}{3\mu} := \eta_2, \\
G_{44} &= b_x + 2a_x\omega\eta^2 < 1 \quad \Rightarrow \quad
\eta < \sqrt{\frac{1 - b_x}{2a_x\omega}} := \eta_3, \\
G_{55} &= b_y + a_y\omega\eta < 1 \quad \Rightarrow \quad
\eta < \frac{1 - b_y}{a_y\omega} := \eta_4.
\end{align*}

Define the threshold
\begin{equation}
\eta_{\mathrm{diag}} := \min\{\eta_1, \eta_2, \eta_3, \eta_4\}.
\end{equation}
Then for any $0 < \eta < \eta_{\mathrm{diag}}$, it holds that $0 < G_{ii}(\eta) < 1$ for $i = 1,\dots,5$.

\textbf{Step 2: Bounding the Determinant.}

To analyze the behavior of $\det(I - G(\eta))$, we begin by performing a cofactor expansion along the third row of the matrix $I - G(\eta)$. This yields the decomposition, i.e., 
\begin{align}
  \det(I - G(\eta)) = -c_5 \cdot \det A(\eta) + \frac{3\mu\eta}{2} \cdot \det B(\eta),  \label{eq:det_decomp}
\end{align}
where  $A(\eta)$ and $B(\eta)$ are both $4 \times 4$ matrices whose entries depend continuously on the parameter $\eta \in [0, \eta_{\mathrm{diag}}]$, and are given as follows
\begin{align*}
\resizebox{\columnwidth}{!}{$
    A(\eta) := \begin{bmatrix} \dfrac{-2w_1\eta^2}{w_2} & 0 & \dfrac{-8\omega w_1\eta^2}{w_2} & 0 \\ \dfrac{w_2}{2}-c_2\eta^2 & -c_3 & -c_4\eta^2 & \dfrac{-8\omega w_1\eta^2}{w_2} \\ -a_x\eta^2 & -na_x & 1 - b_x - 2a_x\omega\eta^2 & 0 \\ -c_7 & -c_8 & -c_7\omega & 1 - b_y - a_y\omega\eta \end{bmatrix},
    $}
\end{align*}
\begin{align*}
\resizebox{\columnwidth}{!}{$
    B(\eta) := \begin{bmatrix} \dfrac{w_2}{2} & \dfrac{-2w_1\eta^2}{w_2} & \dfrac{-8\omega w_1\eta^2}{w_2} & 0 \\ -c_1 & \dfrac{w_2}{2} - c_2\eta^2 & -c_4\eta^2 & \dfrac{-8\omega w_1\eta^2}{w_2} \\ -2a_x & -a_x\eta^2 & 1 - b_x - 2a_x\omega\eta^2 & 0 \\ -c_6 & -c_7 & -c_7\omega & 1 - b_y - a_y\omega\eta \end{bmatrix}.
    $}
\end{align*}

We now analyze each of the two terms in the expression for $\det(I - G(\eta)).$

The structure of $A(\eta)$ reveals that every entry in its first row is proportional to $\eta^2$. Therefore, we may factor $\eta^2$ from the entire first row:
\[
\det A(\eta) = \eta^2 \cdot \tilde{D}_A(\eta),
\]
where $\tilde{D}_A(\eta)$ denotes the determinant of a matrix with continuous entries. Since $\tilde{D}_A(\eta)$ is continuous on the compact interval $[0, \eta_{\mathrm{diag}}]$, it is bounded. Define $R_1(\eta) := c_5 \cdot \tilde{D}_A(\eta),$
which is also continuous and bounded on $[0, \eta_{\mathrm{diag}}]$. Thus, the first term in Eq.(\ref{eq:det_decomp}) becomes
\[
-c_5 \cdot \det A(\eta) = -\eta^2 R_1(\eta).
\]
The function $\det B(\eta)$ is continuous since the matrix entries are continuous in $\eta$. Furthermore, we are given that
\[
\det B(0) = D := \frac{w_2^2}{4}(1 - b_x)(1 - b_y) > 0.
\]
By continuity, $\det B(\eta)$ admits the expansion
\[
\det B(\eta) = D + \eta \cdot h(\eta),
\]
where $h(\eta)$ is a continuous function. Substituting into the second term of Eq. (\ref{eq:det_decomp}) yields
\begin{align*}
\frac{3\mu\eta}{2} \cdot \det B(\eta) &= \frac{3\mu\eta}{2} \left( D + \eta h(\eta) \right) \\
&= \eta \cdot \frac{3\mu}{2} D + \eta^2 \cdot \frac{3\mu}{2} h(\eta).
\end{align*}
Define
$R_2(\eta) := \frac{3\mu}{2} h(\eta)$, which is continuous and bounded. Thus,
\[
\frac{3\mu\eta}{2} \cdot \det B(\eta) = \eta \cdot \frac{3\mu}{2} D + \eta^2 R_2(\eta).
\]

Combining both terms, we obtain
\begin{align*}
\det(I - G(\eta)) &= \left( \eta \cdot \frac{3\mu}{2} D + \eta^2 R_2(\eta) \right) - \eta^2 R_1(\eta) \\
&= \eta \cdot \frac{3\mu}{2} D - \eta^2 (R_1(\eta) - R_2(\eta)).
\end{align*}
Define the remainder $R(\eta) := R_1(\eta) - R_2(\eta)$, which is continuous and bounded on $[0, \eta_{\mathrm{diag}}]$. Hence, the final expression becomes
\begin{equation} \label{eq:det_final}
\det(I - G(\eta)) = \eta \left( \frac{3\mu}{2} D \right) - \eta^2 R(\eta),
\end{equation}
where $R(\eta)$ is continuous and bounded  for $\eta \in [0, \eta_{\mathrm{diag}}]$. Let
\[
K := \sup_{0 \le \eta \le \eta_{\mathrm{diag}}} |R(\eta)| < \infty.
\]
Then, for
\[
\eta < \eta_{\mathrm{det}} := \frac{\frac{3\mu}{2} D}{K},
\]
we ensure that $\det(I - G(\eta)) > 0$.

Combining both conditions, define the global threshold
\begin{equation}
\eta^\star := \min\left\{ \eta_{\mathrm{diag}}, \eta_{\mathrm{det}} \right\}.
\end{equation}

Then, for all $0 < \eta < \eta^\star$, it follows that $0 < G_{ii}(\eta) < 1$ and $\det(I - G(\eta)) > 0$, which together imply that $\rho(G(\eta)) < 1$.
\end{proof}
\CenteredUppercase{Proof of Theorem \ref{Th1}}
\begin{proof}
Recalling that the fuction $f_t$ is $L$-smooth,  the descent lemma leads to 
\begin{align}
   f_t(y) \leq f_t(x) + \langle \nabla f_t(x), y - x \rangle + \frac{L_g}{2} \|y - x\|^2. \label{eq:desc}
\end{align}

Let \(y = \bar{x}_t\) and \(x = x_t^\star\). Since \(x_t^\star\) is the minimizer of \(f_t(x)\), the first-order optimality condition under Assumption 2 implies \(\nabla f_t(x_t^\star) = 0\). Substituting these into (\ref{eq:desc}) yields
\[
f_t(\bar{x}_t) \leq f_t(x_t^\star) + \langle 0, \bar{x}_t - x_t^\star \rangle + \frac{L_g}{2} \|\bar{x}_t - x_t^\star\|^2.
\]

Simplifying the above inequality and adding expectations to both sides obtain
\begin{align}
  \mathbb{E}\left[f_t\left(\bar{x}_t\right)\right]-f_t\left(x^{\star}_t\right) \leq \frac{L}{2} \mathbb{E}\left[\left\|\bar{x}_t-x^{\star}_t\right\|^2\right]. \label{theo_1}
\end{align}

Note that $\mathbb{E}\left[\left\|\bar{x}_t-x^{\star}_t\right\|^2\right]$ is the third component of $\mathrm{U}_t$. Using norm equivalence in finite-dimensional vector spaces, there exist positive constants $\lambda_1$ and $\lambda_2$ such that $\|\cdot\| \leq \lambda_1\|\cdot\|_\gamma$ and $\|\cdot\|_\gamma \leq \lambda_2\|\cdot\|$. Also, $\mathbb{E}\left[\left\|\bar{x}_t-x^{\star}_t\right\|^2\right] \le \|\mathrm{U}_t\| \le \lambda_1 \|\mathrm{U}_t\|_\gamma$.
Applying this, we bound expression \cref{theo_1} as
\begin{align}
\mathbb{E}\left[f_t\left(\bar{x}_t\right)\right]-f_t\left(x^{\star}_t\right) \leq \frac{L \lambda_1}{2}\left\|\mathrm{U}_t\right\|_\gamma. \label{Theo_1}
\end{align}
Drawing from \cite[Lemma 5.6.10]{horn2012matrix}, for any $\epsilon > 0$, there exists a matrix norm $\|\cdot\|\gamma$ satisfying 
\begin{align}
    \|G(\eta)\|_\gamma \leq \rho(G(\eta))+\epsilon. \label{rho_gamma}
\end{align}
Since $\rho(G(\eta)) < 1$ for sufficiently small $\eta > 0$, we can choose $\epsilon \in \left(0, 1-\rho(G(\eta))\right)$ and define $\tilde{\rho} := \rho(G(\eta))+\epsilon$. Then, from \cref{rho_gamma}, $\|G(\eta)\|_\gamma \leq \tilde{\rho} < 1$.
Moreover, by \cite[Theorem 5.7.13]{horn2012matrix}, there exists a vector norm $\|\cdot\|\gamma$ compatible with the matrix norm such that $\|M v\|_\gamma \leq\|M\|_\gamma\|v\|_\gamma$ for any matrix $M \in \mathbb{R}^{5 \times 5}$ and $v \in \mathbb{R}^5$.
Applying the norm and triangle inequality to \cref{Lemm14_0}, we obtain
\begin{align*}
    \|\mathrm{U}_{t+1}\|_\gamma &\le \|G(\eta)\mathrm{U}_t + \mathrm{V}_t + v_t^2 \mathbf{d}_1 + \mathbf{d}_2 \bar{\sigma}_t^2 \|_\gamma \\
    &\le \|G(\eta)\|_\gamma \|\mathrm{U}_t\|_\gamma + \|\mathrm{V}_t\|_\gamma + v_t^2 \|\mathbf{d}_1\|_\gamma + \bar{\sigma}_t^2 \|\mathbf{d}_2\|_\gamma \\
    &\le \tilde{\rho} \|\mathrm{U}_t\|_\gamma + \|\mathrm{V}_t\|_\gamma + v_t^2 \|\mathbf{d}_1\|_\gamma + \bar{\sigma}_t^2 \|\mathbf{d}_2\|_\gamma.
\end{align*}
Unrolling the recursion gives:
\begin{align}
    \|\mathrm{U}_t\|_\gamma 
    & \leq \tilde{\rho}^t\|\mathrm{U}_0\|_\gamma + \sum_{k=0}^{t-1} \tilde{\rho}^{t-1-k} (\|\mathrm{V}_k\|_\gamma \nonumber\\
    &\qquad + v_k^2 \|\mathbf{d}_1\|_\gamma + \bar{\sigma}_k^2 \|\mathbf{d}_2\|_\gamma). \label{theom_2} 
 \end{align}
Utilizing the definition of $R_T$ and result \cref{theom_2}, we derive
\begin{align*}
    R_T &\leq \frac{L \lambda_1}{2} \sum_{t=1}^T \big( \tilde{\rho}^{t}\|\mathrm{U}_0\|_\gamma  +\sum_{k=0}^{t-1} \tilde{\rho}^{t-1-k} (\|\mathrm{V}_k\|_\gamma \\
    &\qquad+ v_k^2 \|\mathbf{d}_1\|_\gamma M_L + \bar{\sigma}_k^2 \|\mathbf{d}_2\|_\gamma) \big) \\
        &\le \frac{L \lambda_1}{2} \big( \|\mathrm{U}_0\|_\gamma \sum_{t=1}^T \tilde{\rho}^{t}  +\sum_{t=1}^T \sum_{k=0}^{t-1} \tilde{\rho}^{t-1-k} (\|\mathrm{V}_k\|_\gamma \\
        &\qquad+ v_k^2 \|\mathbf{d}_1\|_\gamma M_L + \bar{\sigma}_k^2 \|\mathbf{d}_2\|_\gamma) \big) \\
        &\le \frac{L \lambda_1}{2} \big( \frac{\|\mathrm{U}_0\|_\gamma}{1-\tilde{\rho}} +\sum_{k=0}^{T-1} (\|\mathrm{V}_k\|_\gamma + v_k^2 \|\mathbf{d}_1\|_\gamma M_L \\
        &\qquad + \bar{\sigma}_k^2 \|\mathbf{d}_2\|_\gamma) \sum_{t=k+1}^T \tilde{\rho}^{t-1-k} \big) \\ 
        &\le \frac{L \lambda_1}{2(1-\tilde{\rho})} \big( \|\mathrm{U}_0\|_\gamma  +\sum_{k=0}^{T-1} (\|\mathrm{V}_k\|_\gamma \\
        &\qquad+ v_k^2 \|\mathbf{d}_1\|_\gamma M_L + \bar{\sigma}_k^2 \|\mathbf{d}_2\|_\gamma) \big) \\ 
        &\le \frac{L \lambda}{2(1-\tilde{\rho})} \Bigg( \|\mathrm{U}_0\|  +\sum_{t=0}^{T-1} (\|\mathrm{V}_t\| \\
        &\qquad+ v_t^2 \|\mathbf{d}_1\|M_L + \bar{\sigma}_t^2 \|\mathbf{d}_2\|) \Bigg)\\
        &\le \frac{L \lambda}{2(1-\tilde{\rho})} \Bigg( \|\mathrm{U}_0\| + C_p \sum_{t=0}^{T-1} \mathbf{p}_t + C_v \sum_{t=0}^{T-1}\mathbf{v}_t \\
        &\qquad + C_d M_L\sum_{t=0}^{T-1} v_t^2   + 4(m+4) C_d \sum_{t=0}^{T-1} \bar{\sigma}_t^2 \Bigg),
    \end{align*}
    where we leverage geometric series properties, swap summation order, use norm equivalence, set $\lambda := \lambda_1 \lambda_2$, and $C_p, C_v$ and $C_d$ are constants defined as
$$ C_p = \sqrt{2a_1^2 + 2(a_y n)^2}, \quad
    C_v = \sqrt{2a_2^2 + a_4^2 + 2a_6^2},$$ and $C_d = \sqrt{a_3^2 + a_5^2 + (a_x^\prime)^2 + a_7^2}$.
    This completes the proof.
\end{proof}

\label{ESGC}
\CenteredUppercase{Proof of Theorem \ref{Th3}}
Following the methodological framework used in the proof of Theorem~\ref{Th1}, we begin by formulating a linear system inequality involving the vector of expectation terms given by
$$
\mathrm{U}_t := \begin{bmatrix}
    \mathbb{E}[\|X_t-\mathbf{1}\bar{x}_t \|^2] \\
    \mathbb{E}[\|Y_t-\mathbf{1}\bar{y}_t \|^2] \\
    \mathbb{E}[\|\bar{x}_t - x_t^*\|^2] \\
    \mathbb{E}[\|X_t-H_t^x\|^2] \\
    \mathbb{E}[\|Y_t-H_t^y\|^2]
\end{bmatrix}. 
$$

According to Eqs. (\ref{update_X}) and (\ref{update_bar_x}) and following the proof of \cref{Xbarx}, the following inequality holds
    \begin{align}
       &\mathbb{E}[\|X_{t+1}-\mathbf{1}\bar{x}_{t+1}\|^2\mid \mathcal{F}_t] \nonumber \\
       &\quad \le \frac{w_1}{2}\mathbb{E}[\|X_t-\mathbf{1}\bar{x}_t \|^2]  +\frac{2w_1\eta^2}{w_2}\mathbb{E}[\|Y_t-\mathbf{1}\bar{y}_t \|^2] \nonumber \\
       &\qquad +\frac{8\eta^2 \omega w_1}{w_2}\mathbb{E}[\|X_t-H_t^x\|^2]. \label{ETS1}
\end{align}

Similarly, using Eqs. (\ref{update_Y}) and (\ref{update_bar_y}), and based on the proof of \cref{lem:Ydiff_bound}, it can be derived that 
    \begin{align}
        &\mathbb{E}[\|Y_{t+1}-\mathbf{1}\bar{y}_{t+1}\|^2\mid \mathcal{F}_t] \nonumber \\
        &\le \frac{w_1}{2}\mathbb{E}[\|Y_t-\mathbf{1}\bar{y}_t \|^2] \nonumber \\
        &\quad + \frac{2w_1}{w_2}\mathbb{E}[\|G_{t+1}-G_t\|^2] \nonumber \\
        &\quad + \frac{8w_1\eta^2 \omega}{w_2}\mathbb{E}[\|Y_t-H_t^y\|^2].
    \end{align}

By applying \cref{GG} to bound $\mathbb{E}[|G_{t+1} - G_t|^2]$ and assuming $\eta < 1/L$, we obtain
   \begin{align}
    &\mathbb{E}[\|Y_{t+1}-\mathbf{1}\bar{y}_{t+1}\|^2\mid \mathcal{F}_t] \nonumber \\
    &\le \left(\frac{w_1}{2}+\frac{96w_1\eta^2L^2}{w_2}\right)\mathbb{E}[\|Y_t-\mathbf{1}\bar{y}_t \|^2] \nonumber \\
    &\quad +\frac{192w_1L^2}{w_2} \mathbb{E}[\|X_t-\mathbf{1}\bar{x}_t \|^2] + \frac{96nw_1}{w_2}\mathbb{E}[\|\bar{x}_t - x_t^*\|^2] \nonumber \\
    &\quad + \frac{96w_1L^2\eta^2 \omega}{w_2}\mathbb{E}[\|X_t-H_t^x\|^2]  + \frac{8w_1\eta^2 \omega}{w_2}\mathbb{E}[\|Y_t-H_t^y\|^2] \nonumber\\ 
    &\quad +\frac{8w_1}{w_2}\|\nabla F_{t+1}(X_t)-\nabla F_t(X_t)\|^2  +\frac{112nw_1}{w_2}\hat{\sigma}^2_t.\label{ETS2}
\end{align}

It follows from \cref{update_bar_x}, similar to the proof of \cref{xx*}, one has
\begin{align}
    &\|\bar{x}_{t+1} - x_{t+1}^*\|^2 \nonumber \\
    &\le \tau \|\bar{x}_t-\eta \nabla f_t(\bar{x}_t)-x_t^*\|^2 + \frac{3\tau}{\tau -1}\Big(\|\eta \nabla f_t(\bar{x}_t)-\eta \bar{g}_t\|^2 \nonumber \\
    &\qquad +\|\eta \bar{g}_t-\eta \bar{y}_t\|^2 +\|x_t^* -x_{t+1}^*\|^2 \Big). 
\end{align}
Taking expectation above, using  \cref{sc-lemma}, Assumptions \ref{smooth} and \ref{unbia}, it holds that
\begin{align}
&\mathbb{E}[\|\bar{x}_{t+1} - x_{t+1}^*\|^2\mid \mathcal{F}_t] \nonumber \\
&\le \tau (1-\eta \mu)^2\mathbb{E}[\|\bar{x}_t - x_t^*\|^2] \nonumber \\
&\quad +\frac{3\tau}{\tau -1}\Big(\frac{L^2\eta^2}{n}\mathbb{E}[\|X_t-\mathbf{1}\bar{x}_t \|^2] \nonumber\\
&\qquad +\frac{\eta^2}{n}\hat{\sigma}^2 +\|x_{t+1}^*-x_t^*\|^2\Big) \nonumber\\ 
&\le \left(1-\frac{3\eta \mu}{2}\right)\mathbb{E}[\|\bar{x}_t - x_t^*\|^2] \nonumber \\
&\quad +\frac{9L^2\eta}{\mu n}\mathbb{E}[\|X_t-\mathbf{1}\bar{x}_t \|^2] \nonumber \\
&\quad +\left(\frac{8}{\mu \eta}+3\right)\|x_{t+1}^*-x_t^*\|^2 + \frac{9\eta}{\mu n}\sigma^2_t, \nonumber \\
&\le \left(1-\frac{3\eta \mu}{2}\right)\mathbb{E}[\|\bar{x}_t - x_t^*\|^2] \nonumber\\
    &\quad +\frac{(8L+1)\eta}{\mu \sqrt{n}}\mathbb{E}[\|X_t-\mathbf{1}\bar{x}_t \|^2] \nonumber \\
    &\quad +\left(\frac{16}{\mu}+1\right)\|x_{t+1}^*-x_t^*\|^2 + \frac{9}{\mu}\hat{\sigma}_t^2, \label{ETS3}
\end{align}
where we choose $\tau = 1+\frac{3\mu \eta}{8}$ and assume $\eta \le \frac{1}{3\mu}$ for the last inequality. 
 
Recalling  \cref{update_hx}, one gets
    \begin{align*}
        H_{t+1}^x & = (1-\alpha_x)H_t^x +\alpha_x \widetilde{X}_t \\
       &= (1-\alpha_x)H_t^x+\alpha_x(Q_t^x+H_t^x)\\
        &=H_t^x+\alpha_x Q_t^x.  
    \end{align*}

Using \cref{XH1} and \cref{young} yields
    \begin{align*}
        &\mathbb{E}[\|X_{t+1}-H_{t+1}^x\|^2\mid \mathcal{F}_t]  \\
        &=\mathbb{E}[\|X_{t+1}-X_t+X_t-H_t^x-\alpha_xQ_t^x\|^2] \\
        &= \mathbb{E}[\|X_{t+1}-X_t+\alpha_x(X_t-H_t^x-Q_t^x) \\
        &\qquad +(1-\alpha_x)(X_t-H_t^x)\|^2]  \\
        & \le \tau_x \mathbb{E}[\|\alpha_x(X_t-H_t^x-Q_t^x)+(1-\alpha_x)(X_t-H_t^x)\|^2] \\
        & \qquad + \frac{\tau_x}{\tau_x-1}\mathbb{E}[\|X_{t+1}-X_t\|^2]. 
    \end{align*}

    By utilizing the convexity of the square norm and assuming $\eta < 1/L$, we obtain 
    \begin{align}
        &\mathbb{E}[\|X_{t+1}-H_{t+1}^x\|^2\mid \mathcal{F}_t] \nonumber \\
        &\le \tau_x \Big(\alpha_x\mathbb{E}[\|X_t-H_t^x-Q_t^x\|^2] \nonumber \\
        &\qquad +(1-\alpha_x)\mathbb{E}[\|X_t-H_t^x\|^2]\Big) \nonumber \\
        &\quad + \frac{\tau_x}{\tau_x-1}\mathbb{E}[\|X_{t+1}-X_t\|^2] \nonumber \\
        &\le \left( \tau_x[1-\alpha_x(1-\omega)]+ \frac{\tau_x}{\tau_x-1}12\eta \omega\right) \nonumber \\ 
        &\qquad \times \mathbb{E}[\|X_t-H_t^x\|^2] \nonumber \\
        &\quad +\frac{\tau_x}{\tau_x-1}\Big[24\mathbb{E}[\|X_t-\mathbf{1}\bar{x}_t \|^2]  +12n\mathbb{E}[\|\bar{x}_t - x_t^*\|^2] \nonumber\\
        &\qquad +12\eta^2 \mathbb{E}[\|Y_t-\mathbf{1}\bar{y}_t \|^2]+12n\eta^2\hat{\sigma}_t^2\Big], 
    \end{align}
where the last inequality is derived from \cref{error_x} and \cref{XX}.

Defining $a_x :=\frac{12\tau_x}{\tau_x-1}>1$ and $b_x := \tau_x[1-\alpha_x(1-\omega)]<1$ gives
\begin{align}
    &\mathbb{E}[\|X_{t+1}-H_{t+1}^x\|^2\mid \mathcal{F}_t] \nonumber \\
    &\le (b_x+a_x \omega \eta)\mathbb{E}[\|X_t-H_t^x\|^2]  +2a_x\mathbb{E}[\|X_t-\mathbf{1}\bar{x}_t \|^2] \nonumber\\
    &\quad +n a_x \mathbb{E}[\|\bar{x}_t - x_t^*\|^2] +a_x \eta^2\mathbb{E}[\|Y_t-\mathbf{1}\bar{y}_t \|^2] \nonumber\\ 
    &\quad + \frac{12n a_x}{L^2} \hat{\sigma}_t^2. \label{ETS4}
\end{align}
    
    Similar to the proof of \cref{ETS4}, one has
    \begin{align}
        &\mathbb{E}[\|Y_{t+1}-H_{t+1}^y\|^2 \mid \mathcal{F}_t] \nonumber \\
        &\quad \le \tau_y [1-\alpha_y(1-\omega)]\mathbb{E}[\|Y_t - H_t^y\|^2] \nonumber \\
        &\qquad +\frac{\tau_y}{\tau_y-1}\mathbb{E}[\|Y_{t+1}-Y_t\|^2].
    \end{align}

It follows from \cref{update_Y} that
\begin{align}
    &\mathbb{E}[\|Y_{t+1}-Y_t\|^2] \nonumber \\
    & \le 6\mathbb{E}[\|Y_t-\mathbf{1}\bar{y}_t \|^2]+3\mathbb{E}[\|G_{t+1}-G_t\|^2] \nonumber \\
    &\quad +6\eta \omega\mathbb{E}[\|Y_t-H_t^y\|^2].
\end{align}

Defining $a_y := \frac{6\tau_y}{\tau_y-1}>1$ and $b_y:=\tau_y[1-\alpha_y(1-\omega)]<1$ yields    
\begin{align}
    &\mathbb{E}[\|Y_{t+1}-H_{t+1}^y\|^2 \mid \mathcal{F}_t] \nonumber \\
    &\le (b_y + \omega\eta a_y)\mathbb{E}[\|Y_t-H_t^y\|^2]  +25a_y\mathbb{E}[\|Y_t-\mathbf{1}\bar{y}_t \|^2] \nonumber \\
    &\quad +48a_yL^2\mathbb{E}[\|X_t-\mathbf{1}\bar{x}_t \|^2] +24n a_y L^2 \mathbb{E}[\|\bar{x}_t - x_t^*\|^2] \nonumber \\
    &\quad +24a_y \omega \eta L^2 \mathbb{E}[\|X_t-H_t^x\|^2]  +28n a_y \hat{\sigma}_t^2\nonumber\\ 
     &\quad +2a_y\|\nabla F_{t+1}(X_t)-\nabla F_t(X_t)\|^2. \label{ETS5}
\end{align}
  
Combining \cref{ETS1,ETS2,ETS3,ETS4,ETS5}, we construct the following system of linear inequality, i.e., 
\begin{align}
    \mathrm{U}_{t+1} \le G^\prime(\eta)\mathrm{U}_t+ \mathrm{V}_t^\prime + \mathbf{d}^\prime \hat{\sigma}_t^2, \nonumber 
\end{align}
where the matrix $G^\prime(\eta)$ and the vectors $\mathrm{V}_t^\prime$ and $\mathbf{d}^\prime$ are defined as
\begin{align*} 
  &G^\prime(\eta):= \\
  &\resizebox{\columnwidth}{!}{$ 
     \begin{bmatrix}
        w_1/2 &c_1^\prime\eta^2 &0 &4c_1^\prime\omega\eta^2 &0 \\
        \frac{192w_1L^2}{w_2} & \frac{w_1}{2}+\frac{96w_1 L^2}{w_2}\eta^2 & \frac{96nw_1}{w_2} & c_2^\prime\omega\eta^2 & 4c_1^\prime\omega\eta^2\\ 
        c_3^\prime\eta &0 &1-3\mu\eta/2 &0 &0\\
        2a_x &a_x\eta^2 &na_x &b_x+a_x\omega\eta &0\\ 
        48a_yL^2 & 25a_y & 24n a_y L^2 & 24a_y \omega \eta L^2 & b_y+a_y\omega\eta 
    \end{bmatrix}
    $}
\end{align*}
\begin{align*}
    \mathrm{V}_t^\prime := \begin{bmatrix} 0 \\ a_1^\prime\mathbf{p}_t^2 \\ a_2^\prime\mathbf{v}_t^2 \\ 0 \\ 2na_y \mathbf{p}_t^2 \end{bmatrix}, 
     \mathbf{d}^\prime := \begin{bmatrix} 0 \\ a_3^\prime \\ 9/\mu \\  12n a_x/L^2 \\ 28na_y \end{bmatrix},
\end{align*}
with coefficient parameters
\begin{align*}
    a_1^\prime &:= \frac{8w_1n}{w_2}, \quad a_2^\prime:=\frac{16}{\mu}+1, \quad a_3^\prime := \frac{112nw_1}{w_2},
\end{align*}
\begin{align*}
    c_1^\prime &= 2w_1/w_2, \quad c_2^\prime = 96w_1L^2/w_2, \\
    c_3^\prime &= \frac{(8L+1)}{\mu \sqrt{n}}, \quad c_4^\prime=24a_yL^2.
\end{align*}

To establish that $0 < \rho(G'(\eta)) < 1$ for an appropriate step size $\eta > 0$, the analysis proceeds by bounding the diagonal elements and the determinant of $I - G'(\eta)$.

The diagonal entries $G'_{ii}(\eta)$ satisfy the following bounds
\begin{align*}
G'_{11} &= \frac{w_1}{2} < 1 \quad (\text{since } w_1 = 1 + \rho^2 < 2), \\
G'_{22} &= \frac{w_1}{2} + \frac{96w_1L^2}{w_2}\eta^2 < 1 \quad \Rightarrow \quad \eta < \sqrt{\frac{w_2}{192w_1L^2}} := \eta_1', \\
G'_{33} &= 1 - \frac{3\mu\eta}{2} \in (0,1) \quad \Rightarrow \quad \eta < \frac{2}{3\mu} := \eta_2', \\
G'_{44} &= b_x + a_x\omega\eta < 1 \quad \Rightarrow \quad \eta < \frac{1 - b_x}{a_x\omega} := \eta_3', \\
G'_{55} &= b_y + a_y\omega\eta < 1 \quad \Rightarrow \quad \eta < \frac{1 - b_y}{a_y\omega} := \eta_4'.
\end{align*}

Define the threshold
\begin{equation}
\eta_{\mathrm{diag}}' := \min\{\eta_1', \eta_2', \eta_3', \eta_4'\}.
\end{equation}
Then for any $0 < \eta < \eta_{\mathrm{diag}}'$, all diagonal elements satisfy $0 < G'_{ii}(\eta) < 1$ for $i = 1,\dots,5$.

To evaluate $I - G'(\eta)$, the approach mirrors that of Lemma~\ref{lem:system_ineq}, using a cofactor expansion along the third row, one has
\begin{align}
\det(I - G'(\eta)) = -c_3^\prime \cdot \det A'(\eta) + \frac{3\mu\eta}{2} \cdot \det B'(\eta), \label{eq:detprime_decomp}
\end{align}
where $A'(\eta)$ and $B'(\eta)$ are $4 \times 4$ matrices constructed by removing the third row and respective columns.

The first row of $A'(\eta)$ is again entirely proportional to $\eta^2$, allowing the factorization by
\[
\det A'(\eta) = \eta^2 \cdot \tilde{D}_{A'}(\eta),
\]
where $\tilde{D}_{A'}(\eta)$ is continuous and bounded. Define $R_1'(\eta) := c_3^\prime \cdot \tilde{D}_{A'}(\eta)$,
so the first term becomes
\[
-c_3^\prime \cdot \det A'(\eta) = -\eta^2 R_1'(\eta).
\]

Similar to the proof of  Lemma \ref{lem:system_ineq}, the function $B'(\eta)$ is continuous , and satisfies  $\det B'(0) = D' > 0$ for some positive constant $D'$. Thus,
\[
\det B'(\eta) = D' + \eta h'(\eta),
\]
where $h'(\eta)$ is continuous. Substituting into the second term yields
\begin{align*}
\frac{3\mu\eta}{2} \cdot \det B'(\eta) 
&= \frac{3\mu\eta}{2}(D' + \eta h'(\eta)) \\
&= \eta \cdot \frac{3\mu}{2} D' + \eta^2 R_2'(\eta),
\end{align*}
where $R_2'(\eta) := \frac{3\mu}{2} h'(\eta)$ is continuous and bounded.

Combining both contributions gives
\begin{align*}
\det(I - G'(\eta)) &= \eta \cdot \frac{3\mu}{2} D' - \eta^2 (R_1'(\eta) - R_2'(\eta)) \\
&= \eta \cdot \frac{3\mu}{2} D' - \eta^2 R'(\eta),
\end{align*}
where $R'(\eta) := R_1'(\eta) - R_2'(\eta)$ is continuous and bounded on $[0, \eta_{\mathrm{diag}}']$.

Let $K' := \sup_{0 \le \eta \le \eta_{\mathrm{diag}}'} |R'(\eta)| < \infty$, and define $\eta_{\mathrm{det}}' := \frac{\frac{3\mu}{2} D'}{K'}$. Then, we have the final threshold
\begin{equation}
\eta^\star := \min\left\{ \eta_{\mathrm{diag}}', \eta_{\mathrm{det}}' \right\}.
\end{equation}
It follows that for all $0 < \eta < \eta^\star$, the conditions $0 < G'_{ii}(\eta) < 1$ and $\det(I - G'(\eta)) > 0$ are satisfied. Consequently, $\rho(G'(\eta)) < 1$.

Similar to the proof of Theorem \ref{Th1}, we establish an upper bound on the dynamic regret $R_T = \sum_{t=1}^T (\mathbb{E}[f_t(\bar{x}_t)] - f_t(x_t^*))$.

We first establish an upper bound for the term $\left\|\mathrm{V}_t^{\prime}\right\|$. From its definition, one has
\begin{align*}
    \|\mathrm{V}_t^\prime\| &= \sqrt{ \left( (a_1^\prime)^2 + 4n^2a_y^2 \right) \|\mathbf{p}_t\|^2 + (a_2^\prime)^2 \|\mathbf{v}_t\|^2 } \\
    &\le \sqrt{ (a_1^\prime)^2 + 4n^2a_y^2 } \mathbf{p}_t + \sqrt{(a_2^\prime)^2} \mathbf{v}_t \\
    &= \sqrt{ (a_1^\prime)^2 + 4n^2a_y^2 } \mathbf{p}_t + |a_2^\prime| \mathbf{v}_t.
\end{align*}

Using the previously derived inequality for the regret $R_T$, , it can be derived that 
\begin{align*}
    R_T &\le \frac{L \lambda}{2(1-\tilde{\rho})} \left( \|\mathrm{U}_0\| + \sum_{t=0}^{T-1} \|\mathrm{V}_t^\prime\| + \sum_{t=0}^{T-1} \|\mathbf{d}^\prime\| \hat{\sigma}_t^2 \right) \\
    &\le \frac{L \lambda}{2(1-\tilde{\rho})} \Bigg( \|\mathrm{U}_0\| + \sqrt{ (a_1^\prime)^2 + 4n^2a_y^2 } \sum_{t=0}^{T-1} \mathbf{p}_t^2 \\
    &\qquad + a_2^\prime \sum_{t=0}^{T-1} \mathbf{v}_t^2 + \|\mathbf{d}^\prime\| \sum_{t=0}^{T-1} \hat{\sigma}_t^2 \Bigg)\\
    &\le \frac{L \lambda}{2(1-\tilde{\rho})} \Bigg( \|\mathrm{U}_0\| + C_p^\prime \sum_{t=0}^{T-1} \mathbf{p}_t^2 + a_2^\prime \sum_{t=0}^{T-1} \mathbf{v}_t^2 \\
    &\qquad + C_d^\prime \sum_{t=0}^{T-1} \hat{\sigma}_t^2 \Bigg),
\end{align*}
where 
\begin{align*}
 C_p^\prime &:= \sqrt{ (a_1^\prime)^2 + 4n^2a_y^2 }, \\
 C_d^\prime & := \sqrt{(a_3^\prime)^2 + \frac{81}{\mu^2} + \frac{144n^2 a_x^2}{L^4} + 784n^2a_y^2}.
\end{align*}
 \qed

\bibliography{ref}
\bibliographystyle{IEEEtran}

\end{document}